\newcommand{\fsets}{{\bf Sets}_f}
\newcommand{\intwitt}{W_{O_K}^a(O_{\bar{K}})}
\newcommand{\dfa}{\mathfrak{A}}
\newcommand{\C}{\mathscr{C}}
\newcommand{\modular}{\mathfrak{F}}
\newcommand{\cl}{\mathscr{B}_f}
\newcommand{\idp}{\mathfrak{p}}
\newcommand{\idq}{\mathfrak{q}}
\newcommand{\idf}{\mathfrak{f}}
\newcommand{\ida}{\mathfrak{a}}
\newcommand{\idb}{\mathfrak{b}}
\newcommand{\idc}{\mathfrak{c}}
\newcommand{\idP}{\mathfrak{P}}
\newcommand{\ratf}{\mathbb{Q}}
\newcommand{\comp}{\mathbb{C}}
\newcommand{\fld}{\mathbb{F}}
\newcommand{\realf}{\mathbb{R}}
\newcommand{\upper}{\mathfrak{H}}
\newcommand{\Ad}{\mathbb{A}}
\newcommand{\F}{\mathrm{F}}
\newcommand{\End}{\mathrm{End}}
\newcommand{\Hom}{\mathrm{Hom}}
\newcommand{\id}{\mathrm{id}}
\newcommand{\integer}{\mathbb{Z}}
\newcommand{\nat}{\mathbb{N}}
\newcommand{\J}{\mathcal{J}}
\newcommand{\idd}{\mathfrak{d}}
\theoremstyle{plain}
\newtheorem{thm}{Theorem}[subsection]
\newtheorem{lem}[thm]{Lemma}
\newtheorem{prop}[thm]{Proposition}
\newtheorem{cor}[thm]{Corollary}
\theoremstyle{definition}
\newtheorem{defn}[thm]{Definition}
\newtheorem{ex}[thm]{Example}
\newtheorem{rem}[thm]{Remark}
\numberwithin{equation}{section}
\title{Semi-galois Categories III:\\ Witt vectors by deformations of modular functions}
\author{Takeo Uramoto\\ Institute of Mathematics for Industry, Kyushu University}
\date{}
\begin{document}
\maketitle
\begin{abstract}
\noindent
Based on our previous work on an arithmetic analogue of Christol's theorem, this paper studies in more detail the structure of the $\Lambda$-ring $E_K = K \otimes \intwitt$ of algebraic Witt vectors for number fields $K$. First developing general results concerning $E_K$, we apply them to the case when $K$ is an imaginary quadratic field. The main results include the ``\emph{modularity theorem}'' for algebraic Witt vectors, which claims that certain deformation families $f: M_2(\widehat{\integer}) \times \upper \rightarrow \comp$ of modular functions of finite level always define algebraic Witt vectors $\widehat{f}$ by their special values, and conversely, every algebraic Witt vector $\xi \in E_K$ is realized in this way, that is, $\xi = \widehat{f}$ for some deformation family $f: M_2(\widehat{\integer}) \times \upper \rightarrow \comp$. This gives a rather explicit description of the $\Lambda$-ring $E_K$ for imaginary quadratic fields $K$, which is stated as the identity $E_K = M_K$ between the $\Lambda$-ring $E_K$ and the $K$-algebra $M_K$ of \emph{modular vectors} $\widehat{f}$. 
\end{abstract}
\section{Introduction}
\label{s1}
This paper is a continuation of our previous work on \emph{arithmetic analogue of Christol's theorem} \cite{Uramoto18}. This theorem claims that a (generalized) \emph{Witt vector} $\xi \in W_{O_K}(O_{\bar{K}})$ (\S 2 \cite{Uramoto18}) is integral over the ring $O_K$ of integers in a number field $K$ if and only if the orbit of $\xi$ under the action of the Frobenius lifts $\psi_\idp: W_{O_K}(O_{\bar{K}}) \rightarrow W_{O_K}(O_{\bar{K}}) $ is finite (cf.\ Theorem 3.4, \cite{Uramoto18}); we then deduced that, with the aid of the work of Borger and de Smit \cite{Borger_Smit11}, which heavily relies on class field theory, this is also precisely when the ghost components $\xi_\ida$ of $\xi$ are \emph{periodic} with respect to some modulus $\idf$ of $K$ (cf.\ Corollary 2, \cite{Uramoto18}). With this background, the major goal of the current paper is then to study in more detail the structure of the $\Lambda$-ring $E_K := K \otimes \intwitt$ (where $\intwitt$ is the $\Lambda$-ring of integral Witt vectors) in the case where $K$ is an imaginary quadratic field; in particular, we prove that the $\Lambda$-ring $E_K$ coincides as a $K$-subalgebra of $(K^{ab})^{I_K}$ with the $K$-algebra $M_K$ of \emph{modular vectors}--- i.e.\ those vectors $\widehat{f} \in (K^{ab})^{I_K}$ whose components $\widehat{f}_\ida$ are given by special values of certain deformation families $f: M_2(\widehat{\integer}) \times \upper \rightarrow \comp$ of modular functions--- i.e.\ the fiber $f_m := f(m, -): \upper \rightarrow \comp$ at each $m \in M_2(\widehat{\integer})$ is a modular function of finite level, and these $f_m$ satisfy certain correlation--- prototypical examples of such deformation families of modular functions are given by \emph{Fricke functions} $f_a$ ($a \in \ratf^2/\integer^2$); cf.\ \S \ref{s4.2}. In summary, the following is our major result, which we call the \emph{modularity theorem} (cf.\ \S \ref{s4.3}, \S \ref{s4.4}): 

\begin{thm}[modularity theorem]
We have the following identity as $K$-subalgebras of $(K^{ab})^{I_K}$: 
\begin{eqnarray}
 E_K &=& M_K. 
\end{eqnarray}
\end{thm}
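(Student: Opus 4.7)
The plan is to prove $E_K = M_K$ by establishing the two inclusions separately, using Shimura's reciprocity law at CM points as the bridge between the arithmetic side (algebraic Witt vectors with their $\Lambda$-ring structure) and the modular side (deformation families of modular functions).

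For the inclusion $M_K \subseteq E_K$, I would start with a deformation family $f: M_2(\widehat{\integer}) \times \upper \rightarrow \comp$ and verify that $\widehat{f}$ lies in $E_K$ by invoking the arithmetic analogue of Christol's theorem (Theorem 3.4 and Corollary 2 of \cite{Uramoto18}). The task reduces to showing that the components $\widehat{f}_\ida$ are $K^{ab}$-valued and periodic with respect to some modulus $\idf$ of $K$. Since each fiber $f_m$ is a modular function of some finite level $N$, its values at CM points lie in the ray class field of conductor $\idf = (N)$; the postulated correlation among the fibers $\{f_m\}_{m \in M_2(\widehat{\integer})}$, combined with Shimura reciprocity, translates the variation of $m$ along ideal classes into the required periodicity in $\ida$. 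The $K^{ab}$-valuedness and the Frobenius compatibility simultaneously fall out of the same reciprocity law.

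For the converse $E_K \subseteq M_K$, given $\xi \in E_K$, the characterization in \cite{Uramoto18} provides a modulus $\idf$ such that $\ida \mapsto \xi_\ida$ is periodic modulo $\idf$ with values in $K^{ab}$. By the classical Shimura--Weber theorem, $K^{ab}$ is generated over $K$ by the special values $f_a(\tau)$ of Fricke functions at a fixed CM point $\tau \in K \cap \upper$, so each individual $\xi_\ida$ is expressible as a polynomial in such special values. The nontrivial task is to repackage these polynomial expressions, \emph{uniformly} in $\ida$, into a single deformation family $f: M_2(\widehat{\integer}) \times \upper \rightarrow \comp$. This should be achieved via the adelic formulation of the modular function field, in which $M_2(\widehat{\integer})$ parametrizes level structures; the embedding $K \hookrightarrow M_2(\ratf)$ induces an $\widehat{O}_K$-action on the deformation parameter which, by class field theory, is compatible with the action of the ideal monoid $I_K$ on the Witt-vector side.

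The main obstacle will be this converse direction, in particular producing a genuine deformation family satisfying \emph{all} inter-fiber correlations, rather than only an ad hoc collection of component values. This reduces to a precise matching between the $\Lambda$-ring structure on $E_K$ (the Frobenius lifts $\psi_\idp$) and the transformation law of modular functions under the $M_2(\widehat{\integer})$-action, mediated again by Shimura reciprocity. I expect the key technical lemma to assert that $\psi_\idp$ corresponds, under the identification $\xi_\ida \leftrightarrow \widehat{f}_\ida$, to the $\idp$-local component of the $\widehat{O}_K \subset M_2(\widehat{\integer})$-action on the deformation parameter $m$; once this compatibility is pinned down, equality in $(K^{ab})^{I_K}$ follows by matching generators and invoking the general structural results on $E_K$ developed in the earlier sections of the paper.
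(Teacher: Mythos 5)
Your forward direction $M_K \subseteq E_K$ is in the right spirit: the paper also proves it by checking that $\widehat{f}$ is a locally constant, $K^{ab}$-valued, $G_K^{ab}$-equivariant function on $DR_K$ and invoking the identification $E_K = \Hom_{G_K}(DR_K, \bar{K})$ (Corollary \ref{characterization of E_K}), with Shimura reciprocity doing the work in the equivariance check (Lemma \ref{G_K equivariance}). Two cautions, though. First, to reduce to ``periodicity'' via the arithmetic Christol theorem you would first have to know that $\widehat{f}$ is a Witt vector (i.e.\ lies in $W_{O_K}(\bar{K})$) before asking whether it is integral/algebraic; that prerequisite is exactly the Frobenius-compatibility you gloss over, and it is not free. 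Second, $K^{ab}$-valuedness plus periodicity alone does not give $E_K$; you also need $G_K^{ab}$-equivariance as an independent condition. The paper handles all three at once by working directly with the functional characterization of $E_K$ rather than through the Christol criterion.

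The converse inclusion is where your proposal has a genuine gap, and you flag it yourself: given $\xi \in E_K$, you want to manufacture a single Witt deformation $f$ with $\widehat{f} = \xi$, and you have no concrete construction — only an expectation that a ``repackaging'' and a compatibility lemma will exist. This is precisely the hard part, and the paper does not do it. Instead it sidesteps the direct construction entirely via Neshveyev's rigidity theorem (Theorem 10.1 of \cite{Yalkinoglu}): a $K$-subalgebra $E \subseteq C(DR_K)$ consisting of locally constant $K^{ab}$-valued $G_K^{ab}$-equivariant functions, which separates the points of $DR_K$ and contains the idempotents $\rho^\ida$, is automatically \emph{all} of $E_K$. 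Having already shown $M_K \subseteq E_K$, the paper only needs two further facts about $M_K$: that the idempotents $\rho^\ida$ are modular vectors (Lemma \ref{rho}, built from the characteristic-function deformations of Example \ref{characteristic functions}), and that modular vectors separate the points of $DR_K$ (Lemma \ref{enough}, built from Fricke functions and the corollary in \S 3, Ch.\ 10 of \cite{Lang}). The conclusion $E_K \subseteq M_K$ then drops out without ever exhibiting a deformation family for an arbitrary $\xi$. Unless you can actually carry out the uniform repackaging you sketch — which amounts to reproving a substantial part of the theory of the arithmetic subalgebra — you should adopt the Stone--Weierstrass-style argument via Neshveyev's characterization.
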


\paragraph{Background}
To be precise, for a flat $O_K$-algebra $A$, recall that the ring $W_{O_K}(A)$ of (generalized) \emph{Witt vectors} with coefficients in $A$ \cite{Borger} is defined as the intersection $W_{O_K}(A) := \bigcap_n U_n(A)$ of the following $O_K$-algebras $U_n(A) \subseteq A^{I_K}$ given by induction on $n \geq 0$ (cf.\ \S 2.1 \cite{Uramoto18}; also see Remark 5, \S 2.2 \cite{Uramoto18} for some intuition on Witt vectors as ``smooth'' functions on $I_K$ with respect to \emph{arithmetic derivations} in the sense of Buium \cite{Buium}):
\begin{eqnarray}
 U_0 (A) &:=& A^{I_K}; \\
 U_{n+1} (A) &:=& \bigl\{ \xi \in U_n(A) \mid \forall \idp \in P_K. \hspace{0.1cm} \psi_\idp \xi - \xi^{N\idp} \in \idp U_n(A) \bigr\}; 
\end{eqnarray}
where $P_K$ and $I_K$ denote the set of maximal ideals of $O_K$ and the monoid of nonzero ideals of $O_K$ respectively; $A^{I_K}$ denotes the $I_K$-times product of $A$; $N\idp$ denotes the absolute norm of $\idp \in P_K$; and $\psi_\idp: A^{I_K} \rightarrow A^{I_K}$ denotes the shift $(\xi_\ida) \mapsto (\xi_{\idp \ida})$. In the case when $K$ is the rational number field $\ratf$, say, this ring $W_{O_K}(A)$ is isomorphic to the usual ring $W_\integer(A)$ of \emph{big Witt vectors}; and the usual ring $W_p(A)$ of \emph{$p$-typical Witt vectors} can be constructed in a similar way. While these rings of big and $p$-typical Witt vectors are conventionally constructed using Witt polynomials, Borger \cite{Borger} recasted these constructions of Witt vectors putting his focus on Frobenius lifts, and constructed the ring of Witt vectors as the universal ring among those rings which are equipped with commuting family of Frobenius lifts, or \emph{$\Lambda$-rings}; this construction naturally allows us to extend the base ring from $\integer$ to arbitrary Dedekind domains $O$ with finite residue fields (or more): He proved the existence of such a universal ring for this generalized setting by the above inductive construction. The basic theory of these generalized Witt vectors was developed in \cite{Borger}; our major concern in this paper is to study the structure of the rings of these generalized Witt vectors.

In particular, among generalized Witt vectors $\xi \in W_{O_K}(O_{\bar{K}})$ with coefficients in the ring $O_{\bar{K}}$ of algebraic integers, those $\xi \in W_{O_K}(O_{\bar{K}})$ which are integral over $O_K$--- i.e.\ \emph{integral Witt vectors}--- were proved to be relevant to class field theory of the number field $K$, as  discussed in our previous work \cite{Uramoto18} based on \cite{Borger_Smit08,Borger_Smit11}; see also \cite{Borger_Smit18}. In fact, on the one hand, it was proved in \cite{Borger_Smit11} that the category $\C_K$ of those (generalized) $\Lambda$-rings which are finite etale over $K$ and have \emph{integral models} (\S 2 \cite{Uramoto18}) is dually equivalent to the category $\cl DR_K$ of finite $DR_K$-sets, where $DR_K$ is the profinite monoid called the \emph{Deligne-Ribet monoid} and given by inverse limit of ray class monoids $DR_\idf$ ($\idf \in I_K$); in this proof, class field theory was used in an essential way, which suggests an inherent connection between class field theory and (generalized) $\Lambda$-rings. Motivated by this duality \cite{Borger_Smit11}, our previous work \cite{Uramoto18} then related integral Witt vectors to the objects of $\C_K$; to be precise, we proved that for any integral Witt vector $\xi \in \intwitt$, the $\Lambda$-ring $X_\xi = K \otimes O_K \langle \xi \rangle$ generated by the orbit $I_K \xi$ of $\xi$ under the action of the Frobenius lifts $\psi_\idp$ is finite etale over $K$ and has an integral model, or in other words, forms an object of $\C_K$ (cf.\ Proposition 4 \cite{Uramoto18})--- indeed as further proved in this paper, these types of objects $X_\xi$ are universal in $\C_K$ (cf.\ \S \ref{s3}). Technically speaking, our major result there, i.e.\ an arithmetic analogue of Christol's theorem (Theorem 3.4 \cite{Uramoto18}), is necessary to prove the finiteness of $X_\xi$ over $K$. (However, this theorem itself is of independent interest in that it gives a natural arithmetic (or $\fld_1$-) analogue of Christol's theorem \cite{Christol,CKFG} on formal power series $\xi \in \fld_q[[t]]$ over finite field $\fld_q$; cf.\ \S 3 \cite{Uramoto18}.)

\paragraph{Contribution}
As a natural continuation of \cite{Uramoto18}, the current paper then studies in more detail the structure of the ring $\intwitt$ of integral Witt vectors and the $K$-algebra $E_K := K \otimes \intwitt$ whose elements we call \emph{algebraic} Witt vectors. In particular, after developing some general basic results about the $K$-algebra $E_K$ (\S \ref{s3}), we then apply them to the case when $K$ is the rational number field $\ratf$ (\S \ref{s3.3}) and an imaginary quadratic field (\S \ref{s4}). The major results of this paper are those given in \S \ref{s4}, where, as briefly summarized above, we relate algebraic Witt vectors $\xi \in E_K$ with certain deformation families of modular functions. 

To be more specific, the subject of \S \ref{s3} is to classify and study the structure of the \emph{galois objects} of $\C_K$ (cf.\ \S 3.2 \cite{Uramoto17}), which provides a general basis for the study in \S \ref{s4}. In particular, we see that, for each finite set $\Xi \subseteq \intwitt$ of integral Witt vectors, we can construct a $\Lambda$-ring $X_\Xi$, which is an object of $\C_K$ as in the case of singleton $\Xi = \{\xi\}$ mentioned above. Concerning this construction, we prove in \S \ref{s3.1} that $X_\Xi$ is a galois object of $\C_K$ for every $\Xi \subseteq \intwitt$, and conversely, every galois object of $\C_K$ is of this form up to isomorphism. That is, the galois objects of $\C_K$ are precisely those of the form $X_\Xi$ for some finite $\Xi \subseteq \intwitt$. This particularly implies that the direct limit of the galois objects of $\C_K$ is naturally isomorphic to our $K$-algebra $E_K = K \otimes \intwitt$; this gives a characterization of the $K$-algebra $E_K$ of algebraic Witt vectors as the universal ring among those $\Lambda$-rings finite etale over $K$ and having integral models (i.e.\ universal with respect to the objects of $\C_K$). For the purpose of \S \ref{s4}, the rest of \S \ref{s3} (i.e.\ \S \ref{s3.2} and \S \ref{s3.3}) is somewhat optional, but the results there will be of independent interest. In \S \ref{s3.2}, on the one hand, we study the structure of the galois objects of $\C_K$; by the result in \S \ref{s3.1}, this means that we study the $\Lambda$-rings of the form $X_\Xi$ for some $\Xi \subseteq \intwitt$. In this subsection, we particularly determine the \emph{state complexity} of integral Witt vectors $\xi \in \intwitt$, i.e.\ the minimum size $c_\xi$ of \emph{deterministic finite automata} (DFAs) that generate $\xi$ (cf.\ \S 3.1 \cite{Uramoto18}); we prove that the state complexity $c_\xi$ for $\xi \in \intwitt$ is equal to the dimension $\dim_K X_\xi$ of the $\Lambda$-ring $X_\xi$ over $K$. (This is an $\fld_1$-analogue of Bridy's result \cite{Bridy} on formal power series $\xi \in \fld_q[[t]]$; cf.\ \S \ref{s3.2}.) In \S \ref{s3.3}, on the other hand, we determine the structure of $\intwitt$ and $E_K$ for the case when $K$ is the rational number field $\ratf$ as an immediate consequence of the result in \S \ref{s3.1} and the result in \cite{Borger_Smit08}: we show that the $\Lambda$-ring $W_\integer^a(\bar{\integer})$ of integral Witt vectors is isomorphic to the group-ring $\integer[\ratf/\integer]$, hence, $E_\ratf$ is isomorphic to the group-algebra $\ratf[\ratf/\integer]$. In more elementary words, this isomorphism $E_\ratf \simeq \ratf[\ratf/\integer]$ shows that the algebraic (resp.\ integral) Witt vectors $\xi = (\xi_n)_{n \in \nat} \in E_\ratf$ are precisely the $\ratf$-linear (resp.\ $\integer$-linear) combinations of the vectors $\zeta^{(\gamma)} \in (\ratf^{ab})^{\nat}$ of the form $\zeta^{(\gamma)} = (e^{2 \pi i \gamma n})_n$ for $\gamma \in \ratf/\integer$. 

The subject of \S \ref{s4} is then to proceed this study to the case where $K$ is an imaginary quadratic field. To be more specific, we prove in \S \ref{s4.3} that the algebraic Witt vectors $\xi \in E_K$ are precisely the \emph{modular vectors}, i.e.\ the vectors $\widehat{f} \in (K^{ab})^{I_K}$ whose components $\widehat{f}_\ida$ are defined by special values of certain deformation families $f: M_2(\widehat{\integer}) \times \upper \rightarrow \comp$ of modular functions, which we shall call \emph{Witt deformation families of modular functions} (because their special values eventually define algebraic Witt vectors). Briefly, a Witt deformation (family) of modular functions is defined as a continuous function $f: M_2(\widehat{\integer}) \times \upper \rightarrow \comp$ such that the fiber $f_m := f(m, -): \upper \rightarrow \comp$ at each $m \in M_2(\widehat{\integer})$ is a modular function of finite level, where the fibers $f_m$ satisfy a few conditions (cf.\ Definition \ref{Witt deformation family}, \S \ref{s4.2}), including e.g.\ the property that $f_{m \gamma} (\gamma^{-1}\tau) = f_m (\tau)$ for $\gamma \in SL_2(\integer) = \Gamma$ in particular. Concerning this, we see that each Witt deformation $f: M_2(\widehat{\integer}) \times \upper \rightarrow \comp$ of modular functions defines a modular vector $\widehat{f} \in (K^{ab})^{I_K}$ in a natural (but non-trivial) way that heavily relies on the work of Connes, Marcolli and Ramachandran \cite{Connes_Marcolli_Ramachandran}, Connes and Marcolli \cite{Connes_Marcolli}, Laca, Larsen, and Neshveyev \cite{Laca_Larsen_Neshveyev} and Yalkinoglu \cite{Yalkinoglu}. Briefly speaking, this construction $f \mapsto \widehat{f}$ is based on the isomorphisms between the above-mentioned Deligne-Ribet monoid $DR_K$ and the (moduli) space $Lat_K^1$ of \emph{$1$-dimensional $K$-lattices} \cite{Connes_Marcolli_Ramachandran,Laca_Larsen_Neshveyev,Yalkinoglu}, and between the (moduli) space $Lat^2_\ratf$ of \emph{$2$-dimensional $\ratf$-lattices} and the quotient space $\Gamma \backslash (M_2(\widehat{\integer}) \times \upper)$ \cite{Connes_Marcolli} (cf.\ \S \ref{s2.2}). Since $K$ is now imaginary quadratic, hence, of degree $2$ over $\ratf$, we have a natural embedding $Lat_K^1 \hookrightarrow Lat_\ratf^2$;  and then composed with the isomorphisms $DR_K \simeq Lat_K^1$ and $Lat_\ratf^2 \simeq \Gamma \backslash (M_2(\widehat{\integer}) \times \upper)$, as well as the canonical embedding $I_K \hookrightarrow DR_K$, each Witt deformation $f: M_2(\widehat{\integer}) \times \upper \rightarrow \comp$ then induces a function $\widehat{f}: I_K \rightarrow \comp$ as follows:
\begin{equation}
 \widehat{f} : I_K \hookrightarrow DR_K \xrightarrow{\simeq} Lat_K^1 \hookrightarrow Lat_\ratf^2 \xrightarrow{\simeq} \Gamma \backslash (M_2(\widehat{\integer}) \times \upper) \xrightarrow{f} \comp. 
\end{equation}
We can see that $\widehat{f}$ takes its values in $K^{ab}$, hence defines a vector $\widehat{f} \in (K^{ab})^{I_K}$ which is what we call the \emph{modular vector associated to $f$}; the $K$-subalgebra $M_K \subseteq (K^{ab})^{I_K}$ is defined as that consisting of modular vectors in this sense. The major result of \S \ref{s4} is the ``\emph{modularity theorem}'' claiming the identity $E_K = M_K$ as $K$-subalgebras of $(K^{ab})^{I_K}$ (\S \ref{s4.3}). In other words, for every Witt deformation $f: M_2(\widehat{\integer}) \times \upper \rightarrow \comp$, the associated modular vector $\widehat{f}$ defies an algebraic Witt vector, i.e.\ $\widehat{f} \in E_K$; and conversely, every algebraic Witt vector $\xi \in E_K$ is realized in this way, i.e.\ $\xi = \widehat{f}$ for some Witt deformation $f: M_2(\widehat{\integer}) \times \upper \rightarrow \comp$. This identity $E_K = M_K$ relates the geometry of $\Lambda$-rings in $\C_K$ and that of modular functions. Moreover, in the last subsection (\S \ref{s4.4}) we prove that $E_K = M_K$ is generated as a $K$-algebra by the modular vectors $\widehat{f}_a$ naturally arising from the Fricke functions $f_a$ ($a \in \ratf^2/\integer^2$).  


\paragraph{Acknowledgement}
We are grateful to James Borger for fruitful discussions, which motivated us to prove Proposition \ref{galois is X_Xi} instead of considering the existence of cyclic Witt vectors, cf.\ Remark \ref{cyclic witt vector}; and to Naoya Yamanaka and Hayato Saigo for their support and encouragement. We are also thankful to an anonymous reviewer for his/her patient proofreading, in particular, alerting us to an error in Lemma 4.4.2 in \cite{Uramoto20v4}; to Yasuhiro Ishitsuka for alerting us to an error in Proposition 3 \cite{Uramoto18} (cf.\ corrigendum given in \S \ref{s2}). This work was done during the author was a member of Nagahama Institute for Bio-Science and Technology, to whom we are grateful for their hospitality. This work was supported by JSPS KAKENHI Grant number JP16K21115. 

\section{Preliminaries}
\label{s2}
\noindent
This section summarizes necessary terminology and results from \cite{Uramoto18,Connes_Marcolli,Connes_Marcolli_Ramachandran,Laca_Larsen_Neshveyev,Yalkinoglu}. But in order to avoid duplications, we refer  the reader to \S 2 -- \S 4 \cite{Uramoto18} for the detailed definitions of basic concepts and results in \cite{Uramoto18}, say, \emph{Witt vectors} (Definition 1, pp.\ 543), \emph{$\Lambda$-rings} (Definition 2, pp.\ 544), \emph{integral models of finite etale $\Lambda$-rings over $K$} (Remark 4, pp.\ 544) and our \emph{arithmetic analogue of Christol's theorem} (Theorem 3.4, pp.\ 557) in particular. Also, for the categorical concepts and results on \emph{semi-galois categories}, we refer the reader to \cite{Uramoto17}. 

But, before proceeding to the main part, let us include here the following corrigendum to \cite{Uramoto18}:

\paragraph{Corrigendum to \cite{Uramoto18}}
Yasuhiro Ishitsuka alerted us that the proof of Proposition 3, \S 3.2, pp.552 \cite{Uramoto18} is valid only for the case when $K$ is of class number one; to be specific, an error occurs in the last part of the proof claiming that $b/\pi \in O_{K, \idq}$ for $\idp \neq \idq$, which is invalid for our choice of $\pi \in \idp$ in general because, when $\idp$ is not principal, $\pi \in \idp$ would belong in some $\idq \neq \idp$ too. We are thankful to him for pointing this out. Thus  we restrict Proposition 3 \cite{Uramoto18} only to the case when $K$ is of class number one.

Accordingly, we repair here our proof of the main result of \cite{Uramoto18}, which depended on Proposition 3. 
Note that Proposition 3 was used in the proofs of Lemma 3.2 (pp.554) and Lemma 3.4 (pp.555). However, in the proof of Lemma 3.2, on the one hand, we actually need only Lemma 3.1. On the other hand, Lemma 3.4 could be also proved without Proposition 3 as follows\footnote{This salvage of Lemma 3.4 is optional in that, as discussed in \cite{Uramoto20v6}, the full statement of Lemma 3.4 was in fact unnecessary for the current paper and \cite{Uramoto18}. Nevertheless we include it here for future reference.}: (The notations are basically as in \cite{Uramoto18}; but in order to indicate the base rings explicitly, we shall denote $U_n(A)$ of \S 2.1 \cite{Uramoto18} as $U_{O_K, n} (A)$.)
\begin{lem}[refinement of Lemma 3.4 \cite{Uramoto18}]
Let $L/K$ be a finite (galois) extension. For any non-negative integer $n \geq 0$, if $\xi \in U_{O_K, n}(O_L)$ then $N^* \xi \in U_{O_L, n}(O_L)$. In particular, if $\xi \in W_{O_K}(O_L)$, then $N^* \xi \in W_{O_L} (O_L)$; and if  $\xi \in W_{O_K}^a(O_L)$, then $N^* \xi \in W_{O_L}^a (O_L)$. 
\end{lem}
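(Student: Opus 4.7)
The plan is to proceed by induction on $n$, with the base case $n = 0$ trivial since both $U_{O_K, 0}(O_L) = O_L^{I_K}$ and $U_{O_L, 0}(O_L) = O_L^{I_L}$ are merely function spaces into $O_L$ and $N^*$ already lands in the target. Granting the general statement, the two ``in particular'' clauses follow at once: intersecting over all $n$ yields preservation of $W_{O_K}(O_L)$, and if $\xi \in W_{O_K}^a(O_L)$ satisfies a monic polynomial $p \in O_K[X] \subseteq O_L[X]$, then $p(N^*\xi) = N^*(p(\xi)) = 0$ since $N^*$ is a ring homomorphism, so $N^*\xi$ is integral over $O_L$.

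For the inductive step, fix $\xi \in U_{O_K, n}(O_L)$ and a prime $\idP \in P_L$ lying over $\idp \in P_K$ with residue degree $f$, so that $N_{L/K}(\idP) = \idp^f$ and $N\idP = (N\idp)^f$. Using $(N^*\xi)_\idA = \xi_{N_{L/K}(\idA)}$, the element $\psi_\idP(N^*\xi) - (N^*\xi)^{N\idP}$ is the pullback along $N_{L/K} : I_L \to I_K$ of $\zeta := \psi_\idp^f \xi - \xi^{(N\idp)^f}$. Thus it suffices to establish
\begin{equation}
 \psi_\idp^f \xi - \xi^{(N\idp)^f} \in \idp \cdot U_{O_K, n-1}(O_L),
\end{equation}
for then writing $\zeta = \sum_i \alpha_i \nu^{(i)}$ with $\alpha_i \in \idp \subseteq \idP$ and $\nu^{(i)} \in U_{O_K, n-1}(O_L)$, the outer inductive hypothesis applied to each $\nu^{(i)}$ yields $\sum_i \alpha_i \cdot N^*\nu^{(i)} \in \idP \cdot U_{O_L, n-1}(O_L)$, as required.

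The displayed inclusion is proved by an inner induction on $f$ based on the telescoping identity
\begin{equation}
 \psi_\idp^{j+1} \xi - \xi^{(N\idp)^{j+1}} = \psi_\idp\bigl(\psi_\idp^j \xi - \xi^{(N\idp)^j}\bigr) + \bigl((\xi^{N\idp} + \eta)^{(N\idp)^j} - (\xi^{N\idp})^{(N\idp)^j}\bigr),
\end{equation}
where $\eta := \psi_\idp \xi - \xi^{N\idp} \in \idp U_{O_K, n-1}(O_L)$ by hypothesis. The second summand expands binomially into a sum of terms each carrying at least one factor of $\eta$, hence lies in $\idp U_{O_K, n-1}(O_L)$. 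The first summand lies there by the inner inductive hypothesis, provided that $\psi_\idp$ preserves both $U_{O_K, n-1}(O_L)$ and its sub-ideal $\idp U_{O_K, n-1}(O_L)$; the latter reduces to the former by $O_K$-linearity of $\psi_\idp$, and the former is itself proved by an auxiliary induction on $m$ using the commutation
\begin{equation}
 \psi_\idq(\psi_\idp \xi) - (\psi_\idp \xi)^{N\idq} = \psi_\idp\bigl(\psi_\idq \xi - \xi^{N\idq}\bigr).
\end{equation}
The main obstacle is coordinating these three nested inductions (outer on $n$, inner on $f$, auxiliary on $m$) cleanly, but each individual step is routine binomial bookkeeping, and crucially the argument avoids any appeal to the flawed Proposition 3 of \cite{Uramoto18}.
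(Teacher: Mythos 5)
Your proof is correct and follows essentially the same strategy as the paper: induction on $n$, reducing the $\idP$-condition on $N^*\xi$ to the $\idp^f$-condition on $\xi$ via the fact that $N^*$ intertwines $\psi_{\idP}$ with $\psi_{\idp^f}$ and then invoking the inductive hypothesis. You additionally spell out, via the telescoping identity and inner induction on $f$, the deduction of $\psi_{\idp}^f\xi - \xi^{(N\idp)^f} \in \idp U_{O_K,n-1}(O_L)$ from the single-prime condition — a step the paper simply asserts — and you fill in the routine ``in particular'' clauses; both additions are sound.
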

\begin{proof}
Since the first claim implies the others, we prove only the first claim by induction on $n \geq 0$. For the base case $n=0$, there is nothing to prove. For induction, assume that the claim is true up to $n$, with which we show that $N^* \xi \in U_{O_L, n+1}(O_L)$ for $\xi \in U_{O_K, n+1}(O_L)$. Take $\xi \in U_{O_K, n+1}(O_L)$ and let us put $\zeta := N^* \xi \in O_L^{I_L}$. 

First, by $U_{O_K, n+1}(O_L) \subseteq U_{O_K, n}(O_L)$ and induction hypothesis, we indeed have $\zeta \in U_{O_K, n}(O_L)$; then, to show $\zeta \in U_{O_L, n+1}(O_L)$, we need prove that $\psi_\idP \zeta - \zeta^{N\idP} \in \idP U_{O_L, n}(O_L)$ for every $\idP \in P_L$. To this end we note that, by $\xi \in U_{O_K, n+1}(O_L)$, we have $\psi_\idp \xi - \xi^{N\idp} \in \idp U_{O_K, n}(O_L)$ for $\idp := \idP \cap O_K$, from which we deduce $\psi_{\idp^f} \xi - \xi^{(N\idp)^f} \in \idp U_{O_K, n}(O_L)$ where $f$ denotes the inertia degree for $\idP \mid \idp$. Therefore, we have some $r_i \in \idp$ and $\eta^{(i)} \in U_{O_K, n}(O_L)$ such that:
\begin{eqnarray}
\psi_{\idp^f} \xi - \xi^{(N\idp)^f} &=& \sum_{i=1}^m r_i \cdot \eta^{(i)}. 
\end{eqnarray}
Applying $N^*$ to the both side of this equality, and noting that $N^* (\psi_{\idp^f} \xi) = \psi_\idP (N^* \xi) = \psi_\idP \zeta$ and $(N\idp)^f = N\idP$, we obtain:
\begin{eqnarray}
 \psi_\idP \zeta - \zeta^{N\idP} &=& \sum_{i=1}^m r_i \cdot N^* \eta^{(i)}.
\end{eqnarray}
By induction hypothesis applied to $\eta^{(i)} \in U_{O_K, n}(O_L)$, we know that $N^* \eta^{(i)} \in U_{O_L, n}(O_L)$; also by $r_i \in \idp \subseteq \idP$, we then obtain $\psi_\idP \zeta - \zeta^{N\idP} \in \idP U_{O_L, n}(O_L)$, which is what we needed to prove. 
\end{proof}

\noindent
With this modification, our main result (i.e.\ Theorem 3.4) of \cite{Uramoto18} remains valid. 

\subsection{The category $\C_K$}
\label{s2.1}
Here we recall some basic facts concerning the semi-galois category $\C_K$ constructed by Borger and de Smit \cite{Borger_Smit11} for a number field $K$. Let $K$ be a number field and $O_K$ the ring of integers in $K$. We denote by $P_K$ and $I_K$ the set of non-zero prime ideals of $O_K$ and the monoid of non-zero ideals of $O_K$ respectively. For each $\idp \in P_K$, we denote by $k_\idp$ the residue field $k_\idp := O_K/ \idp$ and by $N\idp$ the absolute norm $N \idp := \# k_\idp$. 

\begin{defn}[the category $\C_K$]
The objects of the category $\C_K$ \cite{Borger_Smit11} are the $\Lambda$-rings that are finite etale over $K$ and have integral models (cf.\ Remark 4, pp.544 \cite{Uramoto18}); the arrows of $\C_K$ are the $\Lambda$-ring homomorphisms over $K$ between them. 
\end{defn}

The category $\C_K$ is equipped with a functor $\F_K: \C_K^{op} \rightarrow \fsets$ that assigns to each object $X \in \C_K$ the finite set $\F_K(X) := \Hom_K (X, \bar{K})$ of the $K$-algebra homomorphisms from the underlying $K$-algebra $X$ to the algebraic closure $\bar{K}$ of $K$; and to each arrow $f: X \rightarrow Y$ of $\Lambda$-rings the pullback $f^*: \Hom_K(Y, \bar{K}) \rightarrow \Hom_K(X, \bar{K})$. With this functor $\F_K$, we can see that the pair $\langle \C_K^{op}, \F_K \rangle$ forms a semi-galois category. Therefore, by the general results developed in \cite{Uramoto17}, the semi-galois category $\langle \C_K^{op}, \F_K \rangle$ should be canonically equivalent to the semi-galois category $\langle \cl M, \F_M \rangle$ of finite $M$-sets with $M$ being the fundamental monoid $M = \pi_1(\C_K^{op}, \F_K)$ (cf.\ Definition 4, \S 2.2 \cite{Uramoto17}). 

But in the current situation, Borger and de Smit \cite{Borger_Smit11} could give a more explicit description of this equivalence. To be specific, recall that by definition an object $X \in \C_K$ is a finite etale $\Lambda$-ring over $K$, i.e., a finite etale $K$-algebra equipped with commuting family of Frobenius lifts $\psi_\idp: X \rightarrow X$ for each $\idp \in P_K$; thus, the finite set $\F_K (X) = \Hom_K (X, \bar{K})$ is equipped with a continuous action of the absolute galois group $G_K$ of $K$ \emph{as well as} the action of the monoid $I_K$ by natural pullbacks of $\psi_\ida: X \rightarrow X$ for $\ida \in I_K$ (see Remark 2, \S 2.2  \cite{Uramoto18} for this notation $\psi_\ida$); and the actions of $G_K$ and $I_K$ commute with each other. Therefore, the finite set $\F_K (X)$ forms a finite $(G_K \times I_K)$-set with respect to this action. To be specific, the action of the monoid $G_K \times I_K$ on the finite set $\F_K(X)$ is given as follows: 
\begin{eqnarray*}
 \F_K(X) \times (G_K \times I_K)  &\rightarrow& \F_K(X) \\
 (s, (\sigma, \ida)) &\longmapsto& \sigma \circ s \circ \psi_\ida. 
\end{eqnarray*}
Of course, the objects of $\C_K$ are not just finite etale $\Lambda$-rings over $K$ but have \emph{integral models} (cf.\ Remark 4, \S 2.2 \cite{Uramoto18}); Borger and de Smit \cite{Borger_Smit11} gave a characterization of when a finite etale $\Lambda$-ring $X$ has an integral model in terms of the corresponding finite $(G_K \times I_K)$-set $\F_K (X)$. 
As proved there, the \emph{Deligne-Ribet monoid} $DR_K$ (after their work \cite{Deligne_Ribet} where this profinite monoid appeared) comes into play for this characterization: As we recall below, we have a canonical map $G_K \times I_K \rightarrow DR_K$; and it was proved in \cite{Borger_Smit11} that a finite etale $\Lambda$-ring $X$ over $K$ has an integral model (i.e.\ is an object of $\C_K$) if and only if the $(G_K \times I_K)$-action on $\F_K(X)$ factors through this map $G_K \times I_K \rightarrow DR_K$ (Theorem 1.2 \cite{Borger_Smit11}). 

To be precise, the Deligne-Ribet monoid $DR_K$ is defined as the inverse limit of the \emph{ray class monoids}, i.e.\ the finite monoids $DR_\idf$ given for each $\idf \in I_K$ as the quotient monoid $DR_\idf := I_K / \sim_\idf$ of the monoid $I_K$ by the following congruence relation $\sim_\idf$ on $I_K$: for $\ida, \idb \in I_K$, 
\begin{eqnarray}
\label{ideal congruence}
 \ida \sim_\idf \idb &\Leftrightarrow& \ida \idb^{-1} = (t) \hspace{0.2cm} \textrm{for $\exists t \in K_+ \cap (1 + \idf \idb^{-1})$};
\end{eqnarray}
where $K_+$ denotes the totally positive elements of $K$. This congruence relation $\sim_\idf$ is of finite index, and thus, the ray class monoid $DR_\idf$ is a finite monoid. To form an inverse system of the ray class monoids $DR_\idf$, note that if $\idf \mid \idf'$ then $\ida \sim_{\idf'} \idb$ implies $\ida \sim_\idf \idb$; therefore, we have a canonical monoid surjection $DR_{\idf'} \twoheadrightarrow DR_\idf$ by the assignment $[\ida]_{\idf'} \mapsto [\ida]_\idf$ where $[\ida]_\idf \in DR_\idf$ denotes the equivalence class of $\ida \in I_K$ in $DR_\idf$. With respect to these surjections $DR_{\idf'} \twoheadrightarrow DR_\idf$, the ray class monoids $DR_\idf$ ($\idf \in I_K$) constitute an inverse system of finite monoids; and the Deligne-Ribet monoid $DR_K$ is then defined as the inverse limit of this system:
\begin{defn}[the Deligne-Ribet monoid $DR_K$]
The \emph{Deligne-Ribet monoid} $DR_K$ is the profinite monoid defined as the following inverse limit of the above inverse system of the ray class monoids $DR_\idf$:
\begin{eqnarray}
 DR_K &:=& \lim_{\idf \in I_K} DR_\idf. 
\end{eqnarray}
\end{defn}
\begin{rem}
By definition, $DR_K$ is commutative; also, for each $\idf \in I_K$, we can identify $DR_\idf^{\times} = C_\idf$, where $DR_\idf^\times$ is the unit group of $DR_\idf$ and $C_\idf$ is the strict ray class group with the conductor $\idf \cdot (\infty)$ (cf.\ 2.6, \cite{Deligne_Ribet}). Taking inverse limit, we then have an isomorphism $DR_K^\times  \simeq \lim_\idf C_\idf \simeq G_K^{ab}$, where the second isomorphism is the class-field-theory isomorphism. 
\end{rem}

The above-mentioned map $G_K \times I_K \rightarrow DR_K$ is then given, on the second factor, as the canonical map $I_K \rightarrow DR_K$; and on the first factor, as the composition $G_K \twoheadrightarrow G_K^{ab} \simeq \lim_\idf C^\idf \simeq DR_K^{\times} \subseteq DR_K$. With this, we restate here the result of Borger and de Smit \cite{Borger_Smit11}: 

\begin{thm}[Theorem 1.2 \cite{Borger_Smit11}]
A finite etale $\Lambda$-ring $X$ over $K$ has an integral model if and only if the $(G_K \times I_K)$-action on the finite set $\F_K (X)$ factors through the map $G_K \times I_K \rightarrow DR_K$ given above; in other words, this means that the fundamental monoid $\pi_1(\C_K^{op}, \F_K)$ is isomorphic to $DR_K$ and we have the following equivalence of categories:
\begin{eqnarray*}
  \C_K^{op} &\xrightarrow{\simeq}& \cl DR_K \\
  X &\longmapsto& \F_K(X)
\end{eqnarray*}
\end{thm}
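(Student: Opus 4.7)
The plan is to prove the stated biconditional (existence of an integral model $\Leftrightarrow$ factorization of the $(G_K \times I_K)$-action through $DR_K$), and then to extract the equivalence $\C_K^{op} \simeq \cl DR_K$ together with the identification $\pi_1(\C_K^{op}, \F_K) \simeq DR_K$ from the general reconstruction theorem for semi-galois categories \cite{Uramoto17}.

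For the forward implication, I would start from an integral model $\mathcal{X} \subseteq X$: a flat $O_K$-subalgebra with $K \cdot \mathcal{X} = X$ that is stable under every $\psi_\idp$. The defining Frobenius-lift congruence $\psi_\idp(x) \equiv x^{N\idp} \pmod{\idp \mathcal{X}}$ forces, after reduction modulo a prime $\idP$ of $\bar K$ above $\idp$, the action of $\psi_\idp$ on $\F_K(X) = \Hom_K(X, \bar K) = \Hom_{O_K}(\mathcal{X}, \integ_{\bar K})$ to agree with the geometric Frobenius attached to $\idp$. Choosing a conductor $\idf$ divisible by the (finitely many) primes ramifying in $\mathcal{X}$, and using the class-field-theoretic identification $DR_K^\times \simeq G_K^{ab}$ to handle the $G_K$-factor, the joint $(G_K \times I_K)$-action on each orbit in $\F_K(X)$ is seen to factor through $DR_\idf$; taking the inverse limit over $\idf$ yields the factorization through $DR_K$.

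For the converse, assume the action on $\F_K(X)$ factors through $DR_K$. Decomposing $X$ into its field components (keeping track that the $I_K$-part may permute them), the $DR_K^\times$-orbit structure identifies each component as an abelian extension of $K$ via class field theory. The prescribed action of $\idp \in I_K$ then assigns a $K$-algebra endomorphism $\psi_\idp : X \rightarrow X$, and the remaining tasks are (i) verifying that $\psi_\idp$ preserves the integral closure $\integ_X \subseteq X$ and lifts Frobenius modulo $\idp$, and (ii) checking commutation of the family $\{\psi_\idp\}_\idp$. Both are extracted from the defining congruence (\ref{ideal congruence}) of $\sim_\idf$ combined with Deligne--Ribet reciprocity: unramified primes are handled by Artin reciprocity, while ramified primes are absorbed by working modulo a sufficiently deep conductor. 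Once both are verified, $\mathcal{X} := \integ_X$ is the desired integral model.

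With the biconditional in place, the functor $\F_K : \C_K^{op} \rightarrow \cl DR_K$ is essentially surjective (each transitive finite $DR_K$-set is realized by an abelian extension cut out by the class-field-theoretic reciprocity) and fully faithful (hom-sets are determined by the fiber functor), hence an equivalence, which identifies $\pi_1(\C_K^{op}, \F_K)$ with $DR_K$ as a consequence of \cite{Uramoto17}. The main obstacle I anticipate is in the converse, specifically at primes $\idp$ acting \emph{non-invertibly} in $DR_K$---that is, those dividing a relevant conductor $\idf$: here the $DR_K$-action is genuinely semigroup-theoretic rather than group-theoretic, and recovering an honest Frobenius lift $\psi_\idp$ from the abstract combinatorial datum of a $DR_K$-set requires delicate use of the full congruence defining $DR_\idf$, going substantially beyond the group-theoretic surjection $G_K^{ab} \twoheadrightarrow DR_K^\times$ supplied by class field theory.
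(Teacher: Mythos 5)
This theorem is stated in the paper with the explicit attribution ``Theorem 1.2 \cite{Borger_Smit11}'' and carries no proof block: the paper \emph{cites} it as the main result of Borger and de Smit and does not prove it. So there is no in-paper argument to compare your attempt against, and what you have written should not be read as reconstructing the paper's proof of anything; you are instead sketching a proof of an external result that the paper takes as input.

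Judged on its own terms, your sketch has real gaps in both directions and does not constitute a proof. In the forward direction, the identification $\Hom_K(X,\bar K)=\Hom_{O_K}(\mathcal X, O_{\bar K})$ requires $\mathcal X$ to be the \emph{maximal} integral model (the integral closure), not an arbitrary one, and the claim that ``reduction mod $\idP$'' forces $\psi_\idp$ to act as geometric Frobenius on $\F_K(X)$ is only immediate at primes unramified in every component of $X$; at ramified $\idp$ the action of $\psi_\idp$ need not be invertible, and showing that the joint action nevertheless lands in $DR_K$ (rather than merely $\widehat{O}_K \times_{\widehat{O}_K^*} G_K^{ab}$ or some coarser quotient) is not a triviality. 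In the converse direction you explicitly flag the central difficulty--constructing an honest Frobenius lift $\psi_\idp$ with integral model from the purely combinatorial datum of a finite $DR_K$-set at primes where the $DR_K$-action is non-invertible--but you only name the obstacle (``delicate use of the full congruence defining $DR_\idf$'') without supplying the argument. That construction \emph{is} the content of Theorem 1.2 of \cite{Borger_Smit11}, where it is carried out via careful local analysis at ramified primes, and asserting that both ``(i)'' and ``(ii)'' are ``extracted from the defining congruence combined with Deligne--Ribet reciprocity'' is exactly the step you would need to prove. As it stands, the proposal is a plausible roadmap that correctly locates the hard part but leaves it unresolved.
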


\begin{rem}[every component of $X \in \C_K$ is abelian]
Recall that, since the underlying $K$-algebra of each object $X \in \C_K$ is finite etale over $K$, we have an isomorphism $X \simeq L_1 \times \cdots \times L_n$ for some finite extensions $L_i/K$ of number fields. As described above, moreover, since the action of $G_K$ onto $\F_K (X)$ factors through the abelianization $G_K \twoheadrightarrow G_K^{ab}$, it follows that every component $L_i$ of $X$ must be abelian over $K$. (See \S 3 \cite{Borger_Smit11} for a reason of this fact; this is not true for finite etale $\Lambda$-rings \emph{without} integral models.) We shall use this fact throughout this paper. 
\end{rem}

\begin{rem}[the case of $K = \ratf$]
In \cite{Borger_Smit08}, preceding \cite{Borger_Smit11}, the authors studied the case when $K$ is the rational number field $\ratf$. In this case, it is shown that the Deligne-Ribet monoid $DR_\ratf$ is isomorphic to the multiplicative monoid $\widehat{\integer}$ of profinite integers. In particular, the monoid $I_\ratf$ is identified with the multiplicative monoid $\nat$ of positive integers; and we can see that $DR_N$ for $N \in \nat$ is isomorphic to the multiplicative monoid $\integer/N \integer$. 
\end{rem}

\begin{rem}[galois objects]
\label{galois object of C_K}
The major subject of \S \ref{s3} is to classify the galois objects of $\C_K$ in terms of integral Witt vectors. For this reason, we recall here a few basic facts about galois objects of the semi-galois category $\C_K$; nevertheless, instead of recalling the fully general facts on galois objects (cf.\ \S 3.2 \cite{Uramoto17}), it is sufficient for our purpose to see some description of the galois objects of $\cl DR_K$. In this category $\cl DR_K$, the galois objects are precisely the \emph{rooted $DR_K$-sets}, i.e.\ those $DR_K$-sets $S \in \cl DR_K$ which have some $s_0 \in S$ such that $S = s_0 \cdot DR_K$; in this case, $s_0 \in S$ is called a \emph{root} of $S$. (In general, rooted objects are not galois; but in the current situation, $DR_K$ is commutative; from this, it follows that rooted objects in $\cl DR_K$ are always galois.) In terms of $\Lambda$-rings $X \in \C_K$, this means that $X \in \C_K$ is galois if and only if there exists $s_0 \in \F_K(X) = \Hom_K(X, \bar{K})$ such that every $s \in \F_K(X)$ can be written as $s = \sigma \circ s_0 \circ \psi_\ida$ for some $\sigma \in G_K$ and $\ida \in I_K$. We shall use this fact for the study in \S \ref{s3}. 
\end{rem}

\subsection{The moduli spaces of lattices}
\label{s2.2}
The Deligne-Ribet monoid $DR_K$ was defined as the inverse limit $\lim_\idf DR_\idf$ of the ray class monoids $DR_\idf$ ($\idf \in I_K$); but as proved by Yalkinoglu \cite{Yalkinoglu}, this profinite monoid $DR_K$ has yet another aspect as the ``moduli space'' of \emph{$1$-dimensional $K$-lattices} up to some scaling \cite{Connes_Marcolli_Ramachandran,Laca_Larsen_Neshveyev}. In the case when $K$ is an imaginary quadratic field in particular, such $K$-lattices are naturally \emph{$2$-dimensional $\ratf$-lattices}; therefore, $DR_K$ then has a natural embedding to the space $Lat_\ratf^2$ of $2$-dimensional $\ratf$-lattices up to scaling, which is further proved isomorphic to a certain quotient space $\Gamma \backslash (M_2(\widehat{\integer}) \times \upper)$ \cite{Connes_Marcolli}. In this subsection, we recall these facts essentially from \cite{Connes_Marcolli,Connes_Marcolli_Ramachandran,Yalkinoglu,Laca_Larsen_Neshveyev} which will provide a key for relating Witt vectors to modular functions. 

Throughout this subsection, let us suppose that $K$ is an imaginary quadratic field, and also let $O_K = \integer \tau + \integer$ with $\tau \in \upper$, where $\upper$ denotes the upper half plane in $\comp$. 
For a number field $K$, we denote by $\Ad_K$ the ring of adeles of $K$; by $\Ad_{K,f}$ the ring of finite adeles; and also, by $\widehat{O}_K$ the ring of finite integral adeles. In general, for a ring $R$, we denote by $R^*$ the invertible elements. Finally, $[-]: \Ad_K^* \rightarrow G_K^{ab}$ denotes the Artin map. 

We start with relating $DR_K$ with \emph{$1$-dimensional $K$-lattices}, which is defined as follows: 

\begin{defn}[$K$-lattices; \cite{Connes_Marcolli_Ramachandran}]
A ($1$-dimensional) \emph{$K$-lattice} is a pair $(\Lambda, \phi)$ of a finitely generated $O_K$-submodule $\Lambda$ of $\comp$ such that $K \otimes_{O_K} \Lambda \simeq K$ and a $O_K$-linear homomorphism $\phi: K/O_K \rightarrow K \Lambda / \Lambda$. 
\end{defn}

\begin{ex}[cf.\ Lemma 2.4 \cite{Connes_Marcolli_Ramachandran}]
Fractional ideals of $K$ are ``prototypical'' examples of $K$-lattices in the sense that, up to scaling by some $\lambda \in \comp^*$, every $K$-lattice $\Lambda \subseteq \comp$ becomes a fractional ideal as $\lambda \Lambda \subseteq K$. Such a scaling factor $\lambda \in \comp^*$ is unique modulo $K^*$.
\end{ex}


\begin{prop}[cf.\ Proposition 2.6 \cite{Connes_Marcolli_Ramachandran}]
There are bijective correspondences (1) between $\widehat{O}_K \times_{\widehat{O}_K^*} (\Ad_K^* / K^*)$ and the set of $K$-lattices, and (2) between $\widehat{O}_K \times_{\widehat{O}_K^*} (\Ad_{K,f}^* / K^*)$ and the set of $K$-lattices up to scaling. 
\end{prop}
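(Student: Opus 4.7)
The plan is to build explicit bijections using adelic coordinates; (1) and (2) differ only in whether the archimedean idele component is retained, so I will carry out the bookkeeping for (2) and indicate the modification for (1). First, by the example preceding the proposition, every $K$-lattice $(\Lambda,\phi)$ has a rescaling $\lambda \in \comp^*$, unique modulo $K^*$, with $\lambda\Lambda \subseteq K$ a fractional ideal $\ida$; up to scaling I may therefore assume $\Lambda = \ida$. By the classical idele-ideal dictionary, $\ida$ corresponds to a finite idele $s \in \Ad_{K,f}^*$, unique modulo $\widehat{O}_K^*$; the residual $K^*$-ambiguity from the scaling step translates to an ambiguity of $s$ by $K^*$, so that $\Lambda$ modulo scaling is encoded by $[s] \in \Ad_{K,f}^*/K^*$.

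Next I realize the level structure $\phi \colon K/O_K \to K\ida/\ida = K/\ida$ adelically via the standard isomorphisms $K/O_K \simeq \Ad_{K,f}/\widehat{O}_K$ and $K/\ida \simeq \Ad_{K,f}/\widehat{\ida}$ (both consequences of $\Ad_{K,f} = K + \widehat{O}_K$). Localizing, $\Hom_{O_{K,\idp}}(K_\idp/O_{K,\idp},K_\idp/\ida_\idp) \simeq \ida_\idp$ via multiplication; assembling, $\phi$ is given by a unique $\rho \in \widehat{\ida} = s\widehat{O}_K$. Setting $\rho' := s^{-1}\rho \in \widehat{O}_K$, I assign to $(\Lambda,\phi)$ the pair $(\rho',[s]) \in \widehat{O}_K \times (\Ad_{K,f}^*/K^*)$. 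Replacing $s$ by $us$ with $u \in \widehat{O}_K^*$ leaves $\ida$ and $\widehat{\ida}$ invariant, so $\rho$ is unchanged, but $\rho'$ transforms as $u^{-1}\rho'$; paired with the shift $[s] \mapsto [us]$, this is precisely the diagonal $\widehat{O}_K^*$-action defining the fibered product $\widehat{O}_K \times_{\widehat{O}_K^*}(\Ad_{K,f}^*/K^*)$. The inverse map sends $(\rho',[s])$ to the $K$-lattice with $\Lambda = \ida$ (the fractional ideal of $s$) and $\phi$ given by multiplication by $s\rho' \in \widehat{\ida}$. For (1) I retain $\lambda$ as the archimedean component of the idele, forming $s = (s_f,\lambda) \in \Ad_K^*$, which eliminates the ``up to scaling'' quotient.

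The main obstacle is the careful verification that the identification $\Hom_{O_K}(K/O_K,K/\ida) \simeq \widehat{\ida}$ is natural with respect to the $\widehat{O}_K^*$-rescaling of $s$, so that the assignment descends to a well-defined bijection on the fibered quotient. Although each individual ingredient (the idele-ideal dictionary, the local description of torsion $O_K$-modules, and the local Hom computation) is standard, harmonizing the simultaneous $K^*$- and $\widehat{O}_K^*$-identifications across the three choices (the scaling $\lambda$, the idele representative of $\ida$, and the adelic representative of $\phi$) is the delicate point of the argument.
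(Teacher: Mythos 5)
Your proof is correct, and it is genuinely more complete than the text you are comparing against: the paper does not actually prove the proposition, but only \emph{describes} the forward map $[\rho,t]\mapsto(\Lambda_t,\phi_{(\rho,t)})$ and refers the reader to \cite{Connes_Marcolli_Ramachandran}. You instead construct the inverse direction explicitly, starting from a $K$-lattice $(\Lambda,\phi)$, rescaling to a fractional ideal $\ida$, encoding $\ida$ by a finite idele $s$ via the ideal--idele dictionary, and then identifying $\Hom_{O_K}(K/O_K,K/\ida)\simeq\widehat{\ida}$ by the local computation $\Hom_{O_{K,\idp}}(K_\idp/O_{K,\idp},K_\idp/\ida_\idp)\simeq\ida_\idp$. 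The renormalization $\rho':=s^{-1}\rho\in\widehat{O}_K$ is exactly what makes the three sources of ambiguity ($\lambda\bmod K^*$, $s\bmod\widehat{O}_K^*$, and the adelic representation of $\phi$) cancel against the definition of the balanced product, and your inverse map (multiplication by $s\rho'$) agrees with the paper's $\phi_{(\rho,t)}=t_f\circ\rho$ after identifying $s=t_f$, $\rho'=\rho$. Your remark about retaining $\lambda$ as the archimedean component for part (1) is the correct modification and matches the role of $t_\infty^{-1}$ in the paper's $\Lambda_t=t_\infty^{-1}(t_f\widehat{O}_K\cap K)$. In short, where the paper defers to a citation, your argument supplies the actual bijection and the bookkeeping that makes it well defined; the two descriptions yield the same correspondence.
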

\begin{proof}
For this result, the reader is referred to \cite{Connes_Marcolli_Ramachandran}; we just describe the correspondence. For $[\rho, t] \in \widehat{O}_K \times_{\widehat{O}_K^*} (\Ad_K^* / K^*)$, the correspondence is given by $[\rho, t] \mapsto (\Lambda_t, \phi_{(\rho, t)})$, where $\Lambda_t := t_\infty^{-1} (t_f \widehat{O}_K \cap K)$ with $t_f, t_\infty$ denoting the non-archimedian and archimedian components of $t = (t_f, t_\infty) \in \Ad_K^*$ and $\phi_{(\rho, t)}: K/O_K \rightarrow K \Lambda_t / \Lambda_t$ is given by the following composition (the upper row):
\begin{eqnarray}
\xymatrix{
 K/O_K \ar[r] \ar[d]_{\simeq} & K/O_K \ar[r]  \ar[d]_{\simeq} & K \Lambda_{t_f} / \Lambda_{t_f} \ar[r]^{t_\infty^{-1}} \ar[d]_{\simeq}  & K \Lambda_t / \Lambda_t  \\
\Ad_{K,f}/\widehat{O}_K \ar[r]_\rho & \Ad_{K,f} / \widehat{O}_K \ar[r]_{t_f}  & \Ad_{K,f} / t_f \widehat{O}_K  & }
\end{eqnarray}
where $\rho, t_f$ and $t_\infty^{-1}$ denote the maps given by the straightforward multiplications. Based on this, the second correspondence is given just by forgetting the scaling factor $t_\infty$. 
\end{proof}

\begin{prop}[$DR_K$ as moduli space]
There is a bijective correspondence between $DR_K$ and the set of $K$-lattices up to scaling. 
\end{prop}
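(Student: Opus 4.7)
The plan is to invoke the second part of the preceding proposition and reduce the statement to exhibiting a bijection of the form
\begin{equation*}
 \Psi: DR_K \xrightarrow{\simeq} \widehat{O}_K \times_{\widehat{O}_K^*} (\Ad_{K,f}^* / K^*).
\end{equation*}
This identification is essentially the content of \cite{Yalkinoglu, Laca_Larsen_Neshveyev}, so I would present it along those lines. The guiding intuition is that $DR_K$ carries simultaneously ``ideal data'' (coming from the submonoid $I_K \hookrightarrow DR_K$) and ``Galois data'' (coming from the unit group $DR_K^\times \simeq G_K^{ab}$), and both kinds of data are naturally encoded on the right-hand side: the ideal part through the first factor $\widehat{O}_K$ (an idele modulo $\widehat{O}_K^*$ recovers its integral ideal), and the Galois part through the Artin map $[-]: \Ad_{K,f}^*/K^* \to G_K^{ab}$.

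First I would construct $\Psi$ at each finite level. For $\idf \in I_K$, define the finite quotient
\begin{equation*}
 Y_\idf \; := \; (\widehat{O}_K / \idf \widehat{O}_K) \times_{(\widehat{O}_K/\idf \widehat{O}_K)^*} \bigl( \Ad_{K,f}^* / (K^* (1 + \idf \widehat{O}_K)) \bigr),
\end{equation*}
and build a map $DR_\idf \to Y_\idf$ by sending $[\ida]_\idf$ to the class of $[s_\ida, s_\ida]$, where $s_\ida \in \widehat{O}_K \cap \Ad_{K,f}^*$ is any finite idele whose associated fractional ideal is $\ida$ (unique modulo $\widehat{O}_K^*$). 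The passage $\Psi := \lim_\idf (DR_\idf \to Y_\idf)$ will then produce the desired map, once one notes that $\lim_\idf Y_\idf \simeq \widehat{O}_K \times_{\widehat{O}_K^*}(\Ad_{K,f}^*/K^*)$ by an elementary profinite argument.

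The main obstacle will be to verify carefully that the equivalence relation $\sim_\idf$ defining $DR_\idf$ matches the one on $Y_\idf$. Concretely: the relation $\ida \sim_\idf \idb$ means $\ida \idb^{-1} = (t)$ for some $t \in K_+ \cap (1 + \idf \idb^{-1})$; on the adelic side, one must show this is exactly the condition that the ideles $s_\ida, s_\idb$ coincide modulo $K^* \cdot (1 + \idf\widehat{O}_K)$ together with an adjustment in $\widehat{O}_K^*$ via the fiber product. The totally positive condition $t \in K_+$ disappears on the right-hand side because no archimedean factor appears in $\Ad_{K,f}^*$; and the dependence on $\idb$ in the modulus $\idf \idb^{-1}$ is exactly what is tracked by the fiber-product constraint tying the unit $u \in \widehat{O}_K^*$ in the two factors. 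This matching is where the core class field theoretic input enters, via the identification $C_\idf = DR_\idf^\times$ with the unit group of $Y_\idf$.

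Once $\Psi$ is shown to be a bijection (and in fact a monoid isomorphism), composing with the bijection of the preceding proposition $[\rho, t] \mapsto (\Lambda_t, \phi_{(\rho,t)})$ (modulo scaling) yields the claimed bijection between $DR_K$ and $K$-lattices up to scaling. A final sanity check is that the $DR_K$-monoid structure corresponds to the natural action of integral ideles on $K$-lattices (multiplying both $\Lambda$ and the level structure $\phi$), which is forced by the construction and consistent with the identification of $DR_K^\times$ with the isomorphism classes of invertible $K$-lattices.
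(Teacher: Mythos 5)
Your overall strategy is the same as the paper's --- reduce everything to the identification $DR_K \simeq \widehat{O}_K \times_{\widehat{O}_K^*}(\Ad_{K,f}^*/K^*)$ and then compose with the lattice parametrization from the preceding proposition --- but the paper discharges the first step in a single line, invoking Proposition 8.2 of Yalkinoglu together with the class-field-theory isomorphism $G_K^{ab} \simeq \Ad_{K,f}^*/K^*$, whereas you set out to rebuild that isomorphism from scratch at each finite level. That is a legitimate alternative route: it trades a black-box citation for a concrete, level-by-level construction of $\Psi$ and makes it clearer \emph{where} class field theory enters. However, as written your construction is incomplete at precisely the point you identify as "the main obstacle": you state that $\sim_\idf$ must be matched against the equivalence defining $Y_\idf$, but you never carry this out. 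That verification is exactly the content of Yalkinoglu's Proposition 8.2, and it is nontrivial because the congruence condition in $\sim_\idf$ lives at modulus $\idf\idb^{-1}$ rather than $\idf$ --- so the check is genuinely the heart of the result, not a routine chase. Until it is done, your argument is a plan rather than a proof.

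Two smaller points worth flagging. First, your map at level $\idf$ is written as $[\ida]_\idf \mapsto [s_\ida, s_\ida]$; with the conventions of this paper (which follow Connes--Marcolli--Ramachandran, where $[\rho,t] \mapsto (\Lambda_t, \phi_{(\rho,t)})$ with $\Lambda_t = t_\infty^{-1}(t_f\widehat{O}_K \cap K)$), the correct formula is $[\ida]_\idf \mapsto [s_\ida, s_\ida^{-1}]$, so that the ideal $\ida$ is sent to the $K$-lattice $\ida^{-1}$; this is used later in Lemma \ref{component of modular vector} and in Proposition \ref{witt vector from j function}, where the resulting vector is $\iota_\ida = j(\ida^{-1})$, not $j(\ida)$. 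Your version would still give a bijection, but a different one, and the mismatch would propagate. Second, the remark that the totally-positive condition $t \in K_+$ "disappears because no archimedean factor appears in $\Ad_{K,f}^*$" is the right conclusion but for an incompletely stated reason: what makes it work is that $K$ is assumed imaginary quadratic throughout this subsection, so $K$ has no real embeddings, hence $K_+ = K^*$ and $G_K^{ab} \simeq \Ad_{K,f}^*/K^*$; for a number field with real places this step would fail without modifying the quotient.
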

\begin{proof}
This is given as a combination of the above results of \cite{Connes_Marcolli_Ramachandran} and that of \cite{Yalkinoglu} together with the class field theory isomorphism $[-]: \Ad_{K,f}^* / K^* \xrightarrow{\simeq} G_K^{ab}$. That is, by Proposition 8.2 \cite{Yalkinoglu}, we have an isomorphism $DR_K \simeq \widehat{O}_K \times_{\widehat{O}_K^*} G_K^{ab}$; then, by combining with $G_K^{ab} \simeq \Ad_{K,f}^* / K^*$, this induces an isomorphism $DR_K \simeq \widehat{O}_K \times_{\widehat{O}_K^*} (\Ad_{K,f}^*/K^*)$. As proved above, the latter corresponds to the set of $K$-lattices up to scaling, hence the claim. 
\end{proof}

In what follows, we shall denote by $Lat_K^1$ the set of $K$-lattices up to scaling; for each $K$-lattice $(\Lambda,\phi)$, we shall denote by $[\Lambda,\phi] \in Lat_K^1$ the equivalence class (up to scaling) of $(\Lambda, \phi)$. So far, we proved the following isomorphisms:
\begin{equation}
 DR_K \simeq \widehat{O}_K \times_{\widehat{O}_K^*} (\Ad_{K,f}^* / K^*) \simeq Lat_K^1. 
\end{equation}
Moreover, since $K$ is now imaginary quadratic, hence, of degree $2$ over $\ratf$, we can then think of $K$-lattices as \emph{$2$-dimensional $\ratf$-lattices} in the following sense:

\begin{defn}[$2$-dimensional $\ratf$-lattice]
A ($2$-dimensional) \emph{$\ratf$-lattice} is a pair $(\Lambda, \phi)$ of a lattice $\Lambda$ in $\comp$ and a homomorphism $\phi: \ratf^2 / \integer^2 \rightarrow \ratf \Lambda / \Lambda$. 
\end{defn}

Recall that we now have $O_K = \integer \tau + \integer$ for $\tau \in \upper$, hence, $K = \ratf \tau + \ratf$; this choice of the basis $(\tau, 1)^t$ of $K$ over $\ratf$ gives an isomorphism $K \simeq \ratf^2$, with $O_K \simeq \integer^2$. Also, note that for a $K$-lattice $\Lambda \subseteq \comp$, we have $K \Lambda = \ratf \Lambda \subseteq \comp$. By these identifications, each $K$-lattice $(\Lambda, \phi)$ naturally defines a ($2$-dimensional) $\ratf$-lattice $(\Lambda, \phi')$, where $\phi': \ratf^2/\integer^2 \rightarrow \ratf \Lambda/ \Lambda$ is given by:
\begin{equation}
\phi': \ratf^2/\integer^2 \simeq K / O_K \xrightarrow{\phi} K \Lambda/\Lambda = \ratf \Lambda/\Lambda.
\end{equation}
In this sense, we identify $K$-lattices $(\Lambda,\phi)$ with ($2$-dimensional) $\ratf$-lattices; and denote by the same symbol $(\Lambda,\phi)$. 

In general, as in the case of $K$-lattices, $2$-dimensional $\ratf$-lattices can be classified with a certain ``moduli space''; the following proposition essentially due to \cite{Connes_Marcolli} gives a description of the space: (In what follows, we simply denote $\Gamma := SL_2 (\integer)$.)
\begin{prop}[\cite{Connes_Marcolli}]
There is a bijective correspondence between $\Gamma \backslash (M_2(\widehat{\integer}) \times GL_2^+(\realf))$ and the set of $\ratf$-lattices. By this correspondence, we have a bijective correspondence between $\Gamma \backslash (M_2(\widehat{\integer}) \times \upper)$ and the set of $\ratf$-lattices up to scaling by $\comp^*$ as well. 
\end{prop}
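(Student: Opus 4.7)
The plan is to construct an explicit bijection $\Phi$ from $M_2(\widehat{\integer}) \times GL_2^+(\realf)$ to the set of $\ratf$-lattices, show that it is constant on $\Gamma$-orbits with orbits equal to the fibers, and then pass to $\comp^*$-scaling quotients to obtain the second bijection.

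First I fix the identification $\realf^2 \simeq \comp$ sending $(x,y)^t$ to $x + yi$. Given $(m, g) \in M_2(\widehat{\integer}) \times GL_2^+(\realf)$, I set $\Lambda_g := g(\integer^2) \subset \comp$; this is a rank-$2$ $\integer$-lattice because $\det g \neq 0$, and the positivity $\det g > 0$ corresponds to a positively oriented basis. Since $\ratf^2/\integer^2 \simeq (\Ad_{\ratf,f}/\widehat{\integer})^2$ canonically, the element $m \in M_2(\widehat{\integer}) \simeq \End(\widehat{\integer}^2)$ acts naturally on $\ratf^2/\integer^2$; extending $g$ by $\realf$-linearity also produces an isomorphism $g: \ratf^2/\integer^2 \xrightarrow{\simeq} \ratf \Lambda_g / \Lambda_g$. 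I then define $\phi_{(m,g)}$ as the composition $\ratf^2/\integer^2 \xrightarrow{m} \ratf^2/\integer^2 \xrightarrow{g} \ratf \Lambda_g / \Lambda_g$, and set $\Phi(m,g) := (\Lambda_g, \phi_{(m,g)})$. Surjectivity is then immediate: given a $\ratf$-lattice $(\Lambda, \phi)$, a positively oriented $\integer$-basis of $\Lambda$ produces $g \in GL_2^+(\realf)$ with $\Lambda_g = \Lambda$, and $g^{-1} \circ \phi$ is an endomorphism of $\ratf^2/\integer^2$ corresponding to a unique $m \in M_2(\widehat{\integer})$.

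Next I show that the fibers of $\Phi$ coincide with the orbits of the (suitably normalized) diagonal $\Gamma$-action. Two positively oriented $\integer$-bases of the same lattice differ by an element $\gamma \in \Gamma = SL_2(\integer)$; requiring the two resulting maps $\phi_{(m,g)}$ and $\phi_{(m',g')}$ to agree then forces $m'$ to be the correspondingly $\gamma$-translated image of $m$. This establishes the first bijection $\Gamma \backslash (M_2(\widehat{\integer}) \times GL_2^+(\realf)) \xrightarrow{\simeq} \{\ratf\text{-lattices}\}$. For the second statement, scaling $(\Lambda, \phi)$ by $\lambda \in \comp^*$ corresponds to multiplying $g$ by the real matrix $[\lambda]$ representing multiplication by $\lambda$ in the basis $(1,i)$; this identifies $\comp^*$ with the subgroup $\realf_+^* \cdot SO(2) \subset GL_2^+(\realf)$, which is precisely the stabilizer of $i$ under the $GL_2^+(\realf)$-action on $\upper$ by Möbius transformations. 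The $\comp^*$- and $\Gamma$-actions on $GL_2^+(\realf)$ commute, so the first bijection descends modulo $\comp^*$; combined with the classical identification $\comp^* \backslash GL_2^+(\realf) \simeq \upper$ via $g \mapsto g(i)$, this yields the desired bijection $\Gamma \backslash (M_2(\widehat{\integer}) \times \upper) \simeq \{\ratf\text{-lattices}\}/\comp^*$.

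The main difficulty is not conceptual but notational: one must consistently keep track of whether $\Gamma$ and $\comp^*$ act from the left or right on each factor. The subtle point is that the $\Gamma$-action represents change of $\integer$-basis (preserving $\Lambda_g$) while the $\comp^*$-action represents genuine scaling of the lattice, so the two must be arranged to commute, and the induced diagonal $\Gamma$-action on $M_2(\widehat{\integer}) \times GL_2^+(\realf)$ must correctly translate $m$ so that the composition $g \circ m$ is invariant. Once these conventions are fixed, every remaining verification reduces to a direct computation.
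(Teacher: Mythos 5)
Your approach is essentially the same as the paper's: build an explicit map $(m,g)\mapsto(\Lambda_g,\ g\circ m)$, check that the fibers are the $\Gamma$-orbits, and then kill the scaling freedom to get $\upper$. The paper does exactly this, defining $\Lambda_\alpha = \integer^2\cdot\alpha\cdot(i,1)^t$ (row vectors) and $\phi=a\mapsto a\cdot m\alpha\cdot(i,1)^t$, with $\gamma\cdot(m,\alpha)=(m\gamma^{-1},\gamma\alpha)$, and then identifying $GL_2^+(\realf)/\comp^*\simeq\upper$ via $\alpha\mapsto\alpha(i)$.

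One concrete convention problem worth flagging, since you yourself note that the left/right bookkeeping is the delicate point: your identification $(x,y)^t\mapsto x+yi$ together with $g$ acting on column vectors produces a lattice $\Lambda_g=\integer(a+ci)+\integer(b+di)$ whose period ratio $(a+ci)/(b+di)$ has imaginary part $-\det(g)/|b+di|^2<0$, i.e.\ lies in the \emph{lower} half-plane, and moreover scaling by $\lambda\in\comp^*$ becomes \emph{left} multiplication $g\mapsto[\lambda]g$, which does not commute with the evaluation map $g\mapsto g(i)$ (since $[\lambda]$ fixes $i$, not $g(i)$). So the quotient you would naturally form is $\comp^*\backslash GL_2^+(\realf)/\Gamma$, whereas the Möbius identification $g\mapsto g(i)$ quotients by the stabilizer of $i$ acting on the \emph{right}. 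The paper avoids this by letting $\integer^2$ be row vectors acting on the left of $\alpha$ and using the column $(i,1)^t$ on the right, so that $\Gamma$ multiplies $\alpha$ on the left, $\comp^*$ multiplies $\alpha$ on the right, and $\alpha\mapsto\alpha(i)$ lands in $\upper$ and kills precisely the $\comp^*$-action. Replacing your $(x,y)^t\mapsto x+yi$ with $(x,y)^t\mapsto xi+y$ (equivalently, transposing) repairs your setup and reduces it to the paper's; with that fix the remainder of your argument goes through.
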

\begin{proof}
The proof is essentially due to \cite{Connes_Marcolli}; but we need modify the constructions there so that our constructions below get compatible with the conventions in \cite{Shimura}. With this slight modification, the proof in \S \ref{s4} works well. To be more specific, for each $(m, \alpha) \in M_2(\widehat{\integer}) \times GL_2^+(\realf)$, the corresponding $\ratf$-lattice is defined by $(m, \alpha) \mapsto (\Lambda_\alpha, m \alpha)$, where $\Lambda_\alpha := \integer^2 \cdot \alpha \cdot (i, 1)^t$ regarding $\integer^2$ as consisting of row vectors, and $m \alpha: \ratf^2 /\integer^2 \rightarrow \ratf \Lambda_\alpha / \Lambda_\alpha$ is given by $a \mapsto a \cdot m \alpha \cdot (i, 1)^t$ for $a = (a_1, a_2) \in \ratf^2/\integer^2$. (Here $M_2(\widehat{\integer})$ acts on $\ratf^2/\integer^2$ as $\ratf^2/\integer^2 \simeq \Ad_{\ratf,f}^2/\widehat{\integer}^2$.) The group $\Gamma = SL_2(\integer)$ acts on $M_2(\widehat{\integer}) \times GL_2^+(\realf)$ by $(\gamma, (m,\alpha)) \mapsto (m \gamma^{-1}, \gamma \alpha)$. With this definition, the above correspondence $(m,\alpha) \mapsto (\Lambda_\alpha, m\alpha)$ induces a bijection from $\Gamma \backslash (M_2(\widehat{\integer}) \times GL_2^+(\realf))$ to the set of $\ratf$-lattices. 

Concerning the second claim, note that the action of scaling $\lambda = s + it \in \comp^*$ on $\Lambda_\alpha$ corresponds to the action of the following matrix on the right of $\alpha \in GL_2^+(\realf)$:
\begin{eqnarray}
 \lambda &=& \left ( \begin{array}{cc} s & -t \\ t & s \end{array} \right).
\end{eqnarray}
Also the quotient $GL_2^+(\realf)/ \comp^*$ by this action of $\comp^*$ can be identified with the upper half plane $\upper$ via the following correspondence $GL_2^+ (\realf) \rightarrow \upper$:
\begin{eqnarray} 
 \alpha = \left ( \begin{array}{cc} a & b \\ c & d \end{array} \right) &\longmapsto& \alpha(i) := \frac{ai + b}{ci + d}. 
\end{eqnarray}
By this identification, we eventually obtain a bijection from $\Gamma \backslash (M_2(\widehat{\integer}) \times \upper)$ to the set of $\ratf$-lattices up to scaling. 
\end{proof}

\begin{rem}[basis of $\Lambda_\alpha$]
Suppose that $\alpha \in GL_2^+(\realf)$ is given as follows:
\begin{eqnarray}
 \alpha &=& \left ( \begin{array}{cc} a & b \\ c & d \end{array} \right).
\end{eqnarray}
Then the basis of $\Lambda_\alpha$ is given by $(ai + b, ci + d)$; that is, $\Lambda_\alpha = \integer (ai+ b) + \integer (ci + d)$. Also, note that $\tau_\alpha := \alpha (i) = (ai + b) / (ci + d)$ corresponds to this lattice $\Lambda_\alpha$ up to scaling. 
\end{rem}

Let $Lat_\ratf^2$ denote the set of $2$-dimensional $\ratf$-lattices up to scaling. In the above lemma, we have constructed the isomorphism $Lat_\ratf^2 \simeq \Gamma \backslash (M_2(\widehat{\integer}) \times \upper)$; and we also have a natural embedding of sets $Lat_K^1 \hookrightarrow Lat_\ratf^2$ in the sense mentioned above. Consequently, we have constructed the following sequence of maps:
\begin{equation}
  DR_K \simeq \widehat{O}_K \times_{\widehat{O}_K^*} (\Ad_{K,f}^* / K^*) \simeq Lat_K^1 \hookrightarrow Lat_\ratf^2 \simeq \Gamma \backslash (M_2(\widehat{\integer}) \times \upper). 
\end{equation}
We will use this sequence of maps in \S \ref{s4} to relate Witt vectors with modular functions.

\section{Galois objects of $\C_K$}
\label{s3}
In this section, before proceeding to our major subject in \S \ref{s4}, we develop some general facts about the $\Lambda$-ring $K \otimes \intwitt$ of algebraic Witt vectors. The first subsection (\S \ref{s3.1}) gives a classification of galois objects of $\C_K$, from which we deduce the universality of $K \otimes \intwitt$ with respect to the $\Lambda$-rings in $\C_K$; the second subsection (\S \ref{s3.2}) studies the structure of the galois objects of $\C_K$, where we particularly determine the \emph{state complexity} of integral Witt vectors; the last subsection (\S \ref{s3.3}) gives a presentation of $\intwitt$ and $K \otimes \intwitt$ when $K$ is the rational number field $\ratf$, which are proved isomorphic to the group-rings $\integer[\ratf/\integer]$ and $\ratf[\ratf/\integer]$ respectively. After developing these basic results, the next section (\S \ref{s4}) then proceeds to our major subject, where we study these rings when $K$ is an imaginary quadratic field. 

\subsection{Classification of galois objects}
\label{s3.1}
\noindent
The subject of this subsection is to classify the galois objects in $\C_K$ in terms of integral Witt vectors; in this section, let $E_K$ denote the direct limit of the galois objects of $\C_K$ (see \S 3.1 \cite{Uramoto17} for the \emph{inverse} system of galois objects of $\C_K^{op}$). We prove that $E_K$ is naturally isomorphic to $K \otimes \intwitt$; this gives a characterization of our $\Lambda$-ring $K \otimes \intwitt$ of algebraic Witt vectors, which gives a basis of our study in \S \ref{s4}. 

To this end we need the following construction:

\begin{defn}[$\Lambda$-ring $X_\Xi$]
 Let $\Xi \subseteq \intwitt$ be a finite subset. Then the \emph{associated $\Lambda$-ring} $X_\Xi$ is defined as follows:
\begin{eqnarray}
   X_\Xi  &:=& K \otimes O_K [I_K \Xi],
\end{eqnarray}
where $I_K \Xi$ is the orbit of $\Xi$ under the action of $\psi_\ida$s ($\ida \in I_K$). 
\end{defn}

We show that these $\Lambda$-rings are always galois objects of $\C_K$; and conversely, every galois object of $\C_K$ is of this form up to isomorphism:

\begin{prop}
\label{X_Xi is galois}
 For any finite set $\Xi \subseteq \intwitt$, the associated $\Lambda$-ring $X_\Xi$ is a galois object with its root given by $X_\Xi \ni \eta \mapsto \eta_1 \in \bar{K}$. 
\end{prop}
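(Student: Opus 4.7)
The plan is to verify two things: first that $X_\Xi$ lies in $\C_K$, and second that the coordinate evaluation $s_1 : \eta \mapsto \eta_1$ is a root of $\F_K(X_\Xi)$ in the sense of Remark \ref{galois object of C_K}; by that remark, the existence of such a root already suffices for $X_\Xi$ to be galois, since $DR_K$ is commutative.

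For membership $X_\Xi \in \C_K$, the crucial input is the arithmetic analogue of Christol's theorem (Theorem 3.4, \cite{Uramoto18}): each integral Witt vector $\xi \in \intwitt$ has a finite $I_K$-orbit, so the union $I_K \Xi = \bigcup_{\xi \in \Xi} I_K \xi$ is finite. Hence $A := O_K[I_K \Xi]$ is generated as an $O_K$-algebra by finitely many elements of $O_{\bar{K}}^{I_K}$ integral over $O_K$, so it is a finitely generated torsion-free (hence projective) $O_K$-module, and consequently $X_\Xi = K \otimes_{O_K} A$ is a finite-dimensional $K$-algebra. Since $A$ embeds into the reduced ring $O_{\bar{K}}^{I_K}$ and is torsion-free over $O_K$, its localization $X_\Xi$ is again reduced (any nilpotent $c^{-1} a \in X_\Xi$ forces $a^n = 0$ in $A$ and hence $a = 0$); a finite-dimensional reduced algebra over a field of characteristic zero is automatically finite etale. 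Moreover $A$ is stable under each Frobenius lift $\psi_\idp$ (as $\intwitt$ is), so it is a $\Lambda$-subring and provides an integral model of $X_\Xi$. Thus $X_\Xi \in \C_K$, and by the remark that every component of an object of $\C_K$ is abelian, I may decompose $X_\Xi \simeq L_1 \times \cdots \times L_n$ with each $L_i$ a finite abelian (hence Galois) extension of $K$.

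To show $s_1$ is a root, I note that $(\psi_\ida \eta)_1 = \eta_\ida$, so the coordinate evaluation $s_\ida : \eta \mapsto \eta_\ida$ equals $s_1 \circ \psi_\ida$ and already lies in the $I_K$-orbit of $s_1$. The product map $\prod_{\ida \in I_K} s_\ida : X_\Xi \to \bar{K}^{I_K}$ is injective, since it extends the inclusion $A \hookrightarrow O_{\bar{K}}^{I_K}$ via $O_K$-torsion-freeness; therefore each primitive idempotent $e_i$ of $X_\Xi$ (the one supported on the factor $L_i$) is nonzero in $\bar{K}^{I_K}$, so there exists $\ida_i \in I_K$ with $s_{\ida_i}(e_i) = 1$, meaning $s_{\ida_i}$ factors through the projection $X_\Xi \twoheadrightarrow L_i \hookrightarrow \bar{K}$. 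Given an arbitrary $s \in \F_K(X_\Xi)$, its kernel is one of the maximal ideals $\idm_i$ and $s$ factors through an embedding $L_i \hookrightarrow \bar{K}$; since $L_i/K$ is Galois, $s$ and $s_{\ida_i}$ differ by some element of $\mathrm{Gal}(L_i/K)$ lifted to $\sigma \in G_K$, yielding $s = \sigma \circ s_{\ida_i} = \sigma \circ s_1 \circ \psi_{\ida_i}$, as required. The principal obstacle is the finiteness input supplied by the arithmetic Christol theorem: once $I_K \Xi$ is known to be finite, reducedness, the integral model, and the separation of the $\idm_i$ by the evaluations $s_\ida$ all follow in a comparatively direct way, after which the abelianness of the $L_i$ collapses the $G_K$-conjugation to the individual Galois groups.
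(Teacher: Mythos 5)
Your proof is correct and follows essentially the same strategy as the paper's: use the arithmetic Christol theorem to get finiteness of $I_K\Xi$ (hence $X_\Xi \in \C_K$ with integral model $A = O_K[I_K\Xi]$), note that the coordinate evaluations $s_\ida = s_1 \circ \psi_\ida$ lie in the $I_K$-orbit of $s_1$, and then exploit the abelianness of the components $L_i$ to absorb the remaining ambiguity into the $G_K$-action. The only difference is presentational: the paper introduces the finite-index congruence $\equiv_\Xi$ on $I_K$, picks representatives $\ida_1,\ldots,\ida_N$, and shows the diagonal evaluation $t: X_\Xi \to L^N$ is injective; you instead decompose $X_\Xi$ into primitive idempotents $e_i$, use the torsion-free embedding $A \hookrightarrow O_{\bar K}^{I_K}$ to find $\ida_i$ with $s_{\ida_i}(e_i)=1$, and match an arbitrary $s$ to $s_{\ida_i}$ via $\mathrm{Gal}(L_i/K)$. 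Your version is a bit more self-contained in making the injectivity of $\prod_\ida s_\ida$ and the role of the $e_i$ explicit, which the paper leaves somewhat implicit in its last sentence; both arguments rely on the same key inputs.
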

\begin{proof}
 The proof of the first claim that $X_\Xi$ has an integral model (i.e.\ is an object of $\C_K$) follows similarly to that of Proposition 4, \cite{Uramoto18}. We prove that $X_\Xi$ is a galois object with the homomorphism $s_1: X_\Xi \ni \xi \mapsto \xi_1 \in \bar{K}$ being its root, where $1=O_K$ is the trivial  ideal of $O_K$. To be more specific, by definition of galois objects, it suffices to see that every homomorphism $s: X_\Xi \rightarrow \bar{K}$ is given by $s = \sigma \circ s_1 \circ \psi_\ida$ for some $\ida \in I_K$ and $\sigma \in G_K$ (cf.\ Remark \ref{galois object of C_K}, \S \ref{s2.1}). (In this proof, let us denote $X:=X_\Xi$ for simplicity.)
 
To see this, let us first consider a monoid congruence $\equiv_\Xi$ on $I_K$ defined by $\ida \equiv_\Xi \idb$ if and only if $\psi_\ida \xi = \psi_\idb \xi$ for every $\xi \in \Xi$, which is of finite index since the orbit of $\Xi$ under the action of $\psi_\ida$'s ($\ida \in I_K$) is finite (cf.\ Theorem 3.4 \cite{Uramoto18}). Let $I_K = J_1 \sqcup \cdots \sqcup J_N$ be the $\equiv_\Xi$-class decomposition; and choose their representatives, say $\ida_i \in J_i$ for $i = 1, \cdots, N$. Also, since $X$ is finite over $K$ with each component being abelian, we can take a finite abelian subfield $\bar{K}/L/K$ so that the image $s(X)$ for every $s: X \rightarrow \bar{K}$ is contained in $L$. Then let us define a homomorphism $t: X \rightarrow L \times \cdots \times L$ by $t(\eta) := (\eta_{\ida_1}, \cdots, \eta_{\ida_N})$, which is injective. In fact, note first that if $\ida \equiv_\Xi \idb$ then $\eta_\ida = \eta_\idb$ for every $\eta \in X_\Xi$ by the fact that $X_\Xi$ is generated by the orbit $I_K \Xi$ and by definition of $\equiv_\Xi$. Therefore, the values of $\eta_\ida$ are determined by those of $\eta_{\ida_i}$ for $i=1,\cdots, N$; hence $t: X \rightarrow L \times \cdots \times L$ is injective. This means that the $G_K$-set $\Hom_K(X, \bar{K})$ is a quotient of $\bigsqcup \Hom_K(L, \bar{K})$ induced from $t: X \rightarrow L \times \cdots \times L$; and by construction, this proves the claim that every $s \in \Hom_K(X,\bar{K})$ is given by $s = \sigma \circ s_1 \circ \psi_{\ida_i}$ for some $i$. 
\end{proof}

\begin{prop}
\label{galois is X_Xi}
 Every galois object $X \in \C_K$ is isomorphic to $X_\Xi$ for some finite subset $\Xi \subseteq \intwitt$. 
\end{prop}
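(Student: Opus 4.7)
The plan is to start from a galois object $X \in \C_K$ and explicitly produce a finite $\Xi \subseteq \intwitt$ realizing $X \cong X_\Xi$. Since $X \in \C_K$, it admits an integral model $X_0 \subseteq X$, i.e.\ a $\Lambda$-subring which is finitely generated as an $O_K$-module and satisfies $K \otimes_{O_K} X_0 = X$. Fix such an $X_0$, fix the root $s_0 \colon X \to \bar{K}$ supplied by Remark \ref{galois object of C_K}, and pick a finite generating set $x_1, \ldots, x_n \in X_0$ of $X_0$ as an $O_K$-module. I would take the Witt vectors $\xi^{(i)} \in \bar{K}^{I_K}$ with components $\xi^{(i)}_\ida := s_0(\psi_\ida x_i)$ and set $\Xi := \{\xi^{(1)}, \ldots, \xi^{(n)}\}$.

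The core step is to verify $\xi^{(i)} \in \intwitt$. Since $X_0$ is closed under each $\psi_\ida$ and $s_0$ maps $X_0$ into $O_{\bar{K}}$, each $\xi^{(i)}$ has values in $O_{\bar{K}}$. To show $\xi^{(i)} \in W_{O_K}(O_{\bar{K}})$ I would prove, by induction on $n \geq 0$, the stronger statement that for \emph{every} $y \in X_0$ the vector $\zeta^{(y)}$ with $\zeta^{(y)}_\ida := s_0(\psi_\ida y)$ lies in $U_n(O_{\bar{K}})$. The step from $n$ to $n+1$ uses the Frobenius-lift congruence $\psi_\idp y - y^{N\idp} \in \idp X_0$ valid inside $X_0$: writing $\psi_\idp y - y^{N\idp} = \sum_k r_k z_k$ with $r_k \in \idp$, $z_k \in X_0$, and applying the $O_K$-linear map $s_0 \circ \psi_\ida$ (which commutes with $\psi_\idp$) gives $\psi_\idp \zeta^{(y)} - (\zeta^{(y)})^{N\idp} = \sum_k r_k \zeta^{(z_k)} \in \idp U_n(O_{\bar{K}})$ by the inductive hypothesis applied to each $z_k$. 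For integrality of $\xi^{(i)}$ over $O_K$, the Christol-type Theorem 3.4 of \cite{Uramoto18} reduces matters to finiteness of the orbit $\{ \psi_\ida \xi^{(i)} \}_\ida$; this follows because the $I_K$-action on the finite set $\F_K(X)$ factors through some finite ray-class monoid $DR_\idf$, which, by the Borger--de Smit equivalence $\C_K^{op} \simeq \cl DR_K$, forces $\psi_\ida = \psi_\idb$ on $X$ whenever $\ida \sim_\idf \idb$.

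With $\Xi$ in hand, I would construct the isomorphism via the $K$-algebra map $\tilde{s} \colon X \to \bar{K}^{I_K}$, $\tilde{s}(x)_\ida := s_0(\psi_\ida x)$. A direct check using $\psi_\ida \psi_\idb = \psi_{\ida\idb}$ shows $\tilde{s} \circ \psi_\idb = \psi_\idb \circ \tilde{s}$, so $\tilde{s}$ is a $\Lambda$-homomorphism; since it sends $x_i$ to $\xi^{(i)}$ and intertwines the Frobenius lifts, its image contains every generator $\psi_\idb \xi^{(i)}$ of $X_\Xi = K \otimes_{O_K} O_K[I_K\Xi]$, giving surjectivity onto $X_\Xi$. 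Injectivity is exactly where galois-ness of $X$ is used: if $\tilde{s}(x) = 0$ then $(s_0 \circ \psi_\ida)(x) = 0$ for all $\ida$, hence $(\sigma \circ s_0 \circ \psi_\ida)(x) = 0$ for every $\sigma \in G_K$; by Remark \ref{galois object of C_K} every $s \in \F_K(X)$ is of this form, and $\F_K(X)$ separates points of the finite etale $K$-algebra $X$, forcing $x = 0$.

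The main obstacle I anticipate is the inductive verification that each $\xi^{(i)}$ satisfies the Witt congruences at every level: treating only the generators $x_i$ would yield only the first-level congruence, so it is essential to carry the stronger statement ``$\zeta^{(y)} \in U_n(O_{\bar{K}})$ for \emph{all} $y \in X_0$'' through the induction, using that $\psi_\idp y - y^{N\idp}$ expands as an $\idp$-combination of further elements of $X_0$ so that the step stays inside $X_0$. Once that is in place, the remaining ingredients --- existence of the integral model, commutation of $s_0$ with the $\psi$'s, point-separation of $\F_K(X)$, and the Christol-analogue for integrality --- are essentially routine.
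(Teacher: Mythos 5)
Your proposal follows essentially the same route as the paper: take a root $s_0$ and generators of an integral model, form the Witt vectors $\xi^{(i)}_\ida = s_0(\psi_\ida x_i)$, and use galois-ness (every $s\in\F_K(X)$ has the form $\sigma\circ s_0\circ\psi_\ida$) to get injectivity of $x\mapsto \tilde{s}(x)$. The only real difference is that you expand the step the paper compresses into ``it is not difficult to see that $\xi^s\in W_{O_K}^a(O_{\bar K})$'' — your induction through all of $U_n(O_{\bar K})$ for every $y\in X_0$, plus the invocation of the Christol-type Theorem 3.4 for integrality, correctly fills in that gap.
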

\begin{proof}
 Let $s: X \rightarrow \bar{K}$ be a root of $X$ and $A \leq X$ be its integral model. For each $\xi \in X$ we define $\xi^s = (\xi_\ida^s) \in \bar{K}^{I_K}$ by $\xi^s_\ida := s (\psi_\ida \xi)$. It is not difficult to see that $\xi^s \in W_{O_K}^a(O_{\bar{K}})$ for every $\xi \in A$ because $A$ is an integral model. Also, since $X = K \otimes A$ and $A$ is finite over $O_K$, there exist some $\xi_1, \cdots, \xi_n \in A$ that generate $X$ over $K$. Let $\Xi := \{\xi_1^s, \cdots, \xi_n^s\} \subseteq W_{O_K}^a(O_{\bar{K}})$. We prove that $X$ is isomorphic to $X_\Xi$. Notice that the assignment $\xi \mapsto \xi^s $ for $\xi \in X$ defines a $\Lambda$-ring homomorphism $X \rightarrow X_\Xi$, which is clearly surjective. To show its injectivity, note that since $s$ is a root of the galois object $X$, every homomorphism $s': X \rightarrow \bar{K}$ is a composition $s'= s \circ f$ for some $f \in \End(X)$. Also, by the fact that $\End(X)$ is generated by $\psi_\idp$'s, this implies that the values $\xi_\ida^s = s (\psi_\ida \xi)$ determines $\xi \in X$; hence, $\xi \mapsto \xi^s$ is indeed injective. 
\end{proof}

Therefore, the galois objects in $\C_K$ are precisely of the form $X_\Xi$ for some finite $\Xi \subseteq \intwitt$. Consequently, we obtain the following presentation of the $K$-algebra $E_K$ in terms of integral Witt vectors: 

\begin{thm}
\label{E_K}
 We have a canonical isomorphism of $\Lambda$-rings: 
 \begin{eqnarray}
   E_K &\simeq& K \otimes \intwitt. 
 \end{eqnarray}
In particular, $\intwitt$ is isomorphic to the direct limit of the maximal integral models of galois objects of $\C_K$. 
\end{thm}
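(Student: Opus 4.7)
The plan is to combine the classification of galois objects established in Propositions \ref{X_Xi is galois} and \ref{galois is X_Xi} with the fact that filtered colimits commute with tensor products. By those two propositions, the galois objects of $\C_K$ are precisely the $\Lambda$-rings $X_\Xi = K \otimes_{O_K} O_K[I_K \Xi]$ for finite subsets $\Xi \subseteq \intwitt$; and for $\Xi \subseteq \Xi'$, the obvious inclusion $O_K[I_K \Xi] \hookrightarrow O_K[I_K \Xi']$ inside $\intwitt$ induces a $\Lambda$-ring map $X_\Xi \hookrightarrow X_{\Xi'}$ compatible with the action of the $\psi_\ida$. The directed system of galois objects is therefore (up to isomorphism) cofinal with the directed poset of finite subsets of $\intwitt$ ordered by inclusion, so that
\begin{equation*}
 E_K \;=\; \colim_\Xi X_\Xi \;\simeq\; K \otimes_{O_K} \colim_\Xi O_K[I_K \Xi].
\end{equation*}

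Next I would identify the inner colimit. The union $\bigcup_\Xi O_K[I_K \Xi]$ sits naturally inside $\intwitt$ because $\intwitt$ is an $O_K$-subalgebra of $\bar K^{I_K}$ closed under every $\psi_\ida$, so each finitely generated subalgebra $O_K[I_K \Xi]$ lies in $\intwitt$. Conversely, every $\xi \in \intwitt$ belongs to the term $O_K[I_K \{\xi\}]$ of the system. Hence the union equals $\intwitt$, yielding the desired isomorphism $E_K \simeq K \otimes \intwitt$; this is an isomorphism of $\Lambda$-rings because the connecting maps commute with the Frobenius lifts by construction.

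For the last sentence of the theorem, the sub-$O_K$-algebra $O_K[I_K \Xi]$ exhibited in the proofs of Propositions \ref{X_Xi is galois} and \ref{galois is X_Xi} is the integral model of $X_\Xi$, and I would argue it coincides with the \emph{maximal} integral model: any element of $X_\Xi \subseteq \bar K^{I_K}$ integral over $O_K$ automatically satisfies the defining recursion for $\intwitt$ (i.e.\ $\psi_\idp \eta \equiv \eta^{N\idp} \bmod \idp$ inside the integral closure of $O_K$), and therefore already lies in $\intwitt \cap X_\Xi = O_K[I_K \Xi]$. Passing to the colimit of these maximal integral models along the same cofinal directed system then recovers $\bigcup_\Xi O_K[I_K \Xi] = \intwitt$.

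The main obstacle I anticipate is the identification $\intwitt \cap X_\Xi = O_K[I_K \Xi]$, i.e.\ checking that this finitely generated $O_K$-subalgebra already absorbs every element of $X_\Xi$ that is integral over $O_K$. One direction is immediate from Proposition \ref{X_Xi is galois} (which shows the generators are integral Witt vectors); the reverse relies on the fact, guaranteed by \cite{Borger_Smit11} and \cite{Uramoto18}, that $X_\Xi$ is finite etale over $K$ with a well-defined maximal order inside $\bar K^{I_K}$, reducing the claim to a standard statement about integral closures in finite etale $K$-algebras once the embedding into $\bar K^{I_K}$ and the $\Lambda$-structure are used to control the image.
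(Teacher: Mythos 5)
For the first isomorphism $E_K \simeq K \otimes \intwitt$, your argument---combine Propositions \ref{X_Xi is galois} and \ref{galois is X_Xi}, pass to the filtered colimit over finite subsets $\Xi \subseteq \intwitt$, and use that $K \otimes_{O_K} -$ commutes with filtered colimits while $\bigcup_\Xi O_K[I_K\Xi] = \intwitt$---is essentially the approach the paper takes; the paper compresses all of this into the single sentence ``the direct limit of the isomorphisms $X \to X_\Xi$,'' and your fleshing-out of the colimit bookkeeping is faithful to it.

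For the last sentence about maximal integral models, however, there is a genuine gap. You claim that any element of $X_\Xi \subseteq \bar{K}^{I_K}$ integral over $O_K$ automatically satisfies the defining recursion for $\intwitt$. This is false, and conflates the maximal integral model with the integral closure of $O_K$ in $X_\Xi$ (which is strictly larger in general). Take $K = \ratf$, $\xi = \zeta^{(1/2)} = ((-1)^n)_{n \in \nat}$, so $X_\xi \simeq \ratf[x]/(x^2-1)$; the idempotent $\eta = \tfrac{1}{2}(1 + \zeta^{(1/2)}) = (0,1,0,1,\dots)$ satisfies $\eta^2 = \eta$, hence is integral over $\integer$, yet $\psi_2\eta - \eta^2 = (1,0,1,0,\dots) \notin 2\,\bar{\integer}^{\nat}$, so $\eta$ fails already at the first step $U_1(\bar{\integer})$ and is not a Witt vector. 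The correct fact, used inside Proposition \ref{galois is X_Xi}, is that if $\xi$ lies in an \emph{integral model} $A$ then $\xi^s \in \intwitt$; this crucially uses the $\Lambda$-ring structure of $A$ (namely $\psi_\idp\xi - \xi^{N\idp} \in \idp A$, not merely $\idp O_{\bar K}^{I_K}$), not integrality alone. Moreover, the identification $\intwitt \cap X_\Xi = O_K[I_K\Xi]$ that you flag as your main obstacle is actually unnecessary for the statement you are proving: since $O_K[I_K\Xi]$ is an integral model of $X_\Xi$, it sits inside the maximal integral model $A_\Xi$, and Proposition \ref{galois is X_Xi} gives $A_\Xi \subseteq \intwitt$; the sandwich $O_K[I_K\Xi] \subseteq A_\Xi \subseteq \intwitt$ together with $\bigcup_\Xi O_K[I_K\Xi] = \intwitt$ yields $\colim_\Xi A_\Xi = \intwitt$ without ever having to decide whether $A_\Xi$ coincides with $O_K[I_K\Xi]$.
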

\begin{proof}
 This isomorphism $E_K \rightarrow K \otimes \intwitt$ is given as (the direct limit of) the isomorphisms $X \rightarrow X_\Xi$ constructed in the above proposition for galois objects $X$. 
\end{proof}

\begin{rem}
In what follows we identify $E_K$ with $K \otimes \intwitt$ in the sense of this isomorphism, and often call $E_K$ as the $K$-algebra of algebraic Witt vectors. 
\end{rem}


\begin{cor}
\label{characterization of E_K}
 The $K$-algebra $E_K = K \otimes \intwitt$ is isomorphic to the $K$-algebra of all locally constant $K^{ab}$-valued $G_K$-equivariant functions on $DR_K$, symbolically, $E_K = \Hom_{G_K}(DR_K, \bar{K})$. 
\end{cor}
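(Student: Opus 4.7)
The plan is to combine Theorem \ref{E_K} with the equivalence $\C_K^{op} \simeq \cl DR_K$ of \cite{Borger_Smit11}, together with the classical Galois-theoretic duality between finite etale $K$-algebras and finite continuous $G_K$-sets. Concretely, for any finite etale $K$-algebra $X$, the evaluation map
\begin{eqnarray*}
X &\xrightarrow{\simeq}& \Hom_{G_K}(\F_K(X), \bar{K}), \quad x \longmapsto \bigl(s \mapsto s(x)\bigr)
\end{eqnarray*}
is a $K$-algebra isomorphism onto the $G_K$-equivariant maps from the finite $G_K$-set $\F_K(X) = \Hom_K(X, \bar{K})$ to $\bar{K}$. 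This is the standard fact underlying the anti-equivalence $X \leftrightarrow \F_K(X)$.

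Applying this isomorphism to each galois object $X \in \C_K$ (whose fiber $\F_K(X)$ is, by Remark \ref{galois object of C_K}, a rooted $DR_K$-set, i.e.\ a finite quotient of $DR_K$ as a $DR_K$-set) and passing to the direct limit over all galois objects, Theorem \ref{E_K} gives
\begin{eqnarray*}
E_K \;\simeq\; \varinjlim_X X \;\simeq\; \varinjlim_X \Hom_{G_K}(\F_K(X), \bar{K}).
\end{eqnarray*}
Since $DR_K = \varprojlim_{\idf} DR_\idf$ is the profinite completion of its finite rooted $DR_K$-quotients, every locally constant function on the profinite space $DR_K$ factors through some $\F_K(X)$, yielding
\begin{eqnarray*}
\varinjlim_X \Hom_{G_K}(\F_K(X), \bar{K}) \;\simeq\; \Hom_{G_K}(DR_K, \bar{K}),
\end{eqnarray*}
where the right-hand side is by convention the $K$-algebra of locally constant $G_K$-equivariant functions. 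Moreover, the $G_K$-action on $DR_K$ factors through $G_K \twoheadrightarrow G_K^{ab} \simeq DR_K^\times$, so the image of any such function automatically lies in $K^{ab}$, matching the phrasing of the statement.

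The substantive points to check are: (i) the direct system indexed by galois objects $X$ of $\C_K$ cofinally exhausts the inverse system defining $DR_K$---this is immediate from $\C_K^{op} \simeq \cl DR_K$ together with the characterization of galois objects as rooted $DR_K$-sets (cf.\ Remark \ref{galois object of C_K}); and (ii) the pointwise $K$-algebra structures match under these identifications, which follows because the evaluation map $X \xrightarrow{\simeq} \Hom_{G_K}(\F_K(X), \bar{K})$ is a $K$-algebra homomorphism and the transition maps $X \hookrightarrow X'$ among galois objects (coming from surjections $\F_K(X') \twoheadrightarrow \F_K(X)$ of rooted $DR_K$-sets) are likewise $K$-algebra homomorphisms. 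The main conceptual obstacle is simply keeping track of the Tannakian/Galois-type duality through the profinite limit; once this bookkeeping is carried out, no further calculation is required.
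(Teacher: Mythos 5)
Your argument is correct but proceeds along a genuinely different route than the paper. The paper's proof is a one-line citation: it invokes Theorem 10.1 of \cite{Yalkinoglu} (due to Neshveyev), which already asserts that the direct limit of the galois objects of $\C_K$ is isomorphic to the $K$-algebra of locally constant $K^{ab}$-valued $G_K$-equivariant functions on $DR_K$, and then simply composes this with the isomorphism $E_K \simeq K \otimes \intwitt$ from Theorem \ref{E_K}. You instead re-derive the content of Neshveyev's theorem from first principles: you invoke the classical Grothendieck--Galois duality $X \simeq \Hom_{G_K}(\F_K(X), \bar{K})$ for finite etale $K$-algebras, use Remark \ref{galois object of C_K} to identify $\F_K(X)$ for galois $X$ with a finite rooted $DR_K$-set (equivalently, a finite quotient of $DR_K$), and pass to the direct limit, using that $DR_K = \varprojlim \F_K(X)$ over galois objects (which is precisely the statement that $DR_K$ is the fundamental monoid $\pi_1(\C_K^{op}, \F_K)$, Theorem 1.2 of \cite{Borger_Smit11}). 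This buys you a self-contained proof that does not treat Neshveyev's theorem as a black box, at the cost of more bookkeeping; the paper's citation-based proof is shorter and shifts the burden to \cite{Yalkinoglu}. One small point worth making explicit in your write-up: the compatibility between the $G_K$-action on $\F_K(X)$ by post-composition and the $G_K$-action on $DR_K$ via $G_K \twoheadrightarrow G_K^{ab} \simeq DR_K^\times$ is exactly what the Borger--de Smit theorem guarantees (the $(G_K \times I_K)$-action factors through $DR_K$); you use this implicitly, but since it is the crux of why the identification $\varinjlim_X \Hom_{G_K}(\F_K(X), \bar{K}) \simeq \Hom_{G_K}(DR_K, \bar{K})$ respects $G_K$-equivariance, it deserves a sentence.
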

\begin{proof}
 This is a direct consequence of Theorem \ref{E_K} above, and see also Theorem 10.1 \cite{Yalkinoglu} due to Neshveyev. To be more precise, Theorem 10.1 \cite{Yalkinoglu} claims that $E_K$ is isomorphic to the $K$-algebra of such functions. Since we proved that $E_K$ is also isomorphic to $K \otimes \intwitt$, the composition of these isomorphisms shows the claim of this corollary. 
\end{proof}

\begin{rem}[cyclic Witt vector]
\label{cyclic witt vector}
We are concerned with whether every galois object $X \in \C_K$ is actually isomorphic to $X_\xi$ for some $\xi \in \intwitt$, that is, whether we can take $\Xi \subseteq \intwitt$ in Proposition \ref{galois is X_Xi} as a singleton $\Xi = \{\xi\}$. This is true for e.g.\ the (cofinal) galois objects of the form $\ratf[x]/(x^n -1)$ of $\C_\ratf$; but we do not know whether every galois object $X$ has such a $\xi$ in general, which we shall call a \emph{cyclic Witt vector} for $X$ (a la, cyclic vectors for differential modules known in differential galois theory). 
\end{rem}

\begin{rem}[galois correspondence]
Although we do not give a proof here, it would be meaningful to mention a certain galois correspondence that naturally extends the usual galois correspondence of galois theory for number fields. To be specific, our galois correspondence is the one between the following objects:
\begin{enumerate}
 \item $\Lambda$-subalgebras of $E_K$; 
 \item profinite quotients of $DR_K$;
 \item semi-galois full subcategories of $\C_K$. 
\end{enumerate}
This follows from the presentation of $E_K = \Hom_{G_K}(DR_K, \bar{K})$ and $DR_K = \Hom_K(E_K, \bar{K})$ as well as the duality between $DR_K$ and $\C_K$. 
\end{rem}

\subsection{The structure of galois objects}
\label{s3.2}
\noindent 
As we proved in the above subsection, the galois objects of $\C_K$ are precisely those of the form $X_\Xi$ for some finite $\Xi \subseteq \intwitt$. In this subsection, we then study the structure of the galois objects $X_\Xi$ and represent $X_\Xi$ in terms of $\xi \in \Xi$. Starting from some general facts about galois objects in $\C_K$, we describe the components $L_i$ of $X_\Xi \simeq L_1 \times \cdots \times L_r$ and also determine the \emph{state complexity} of integral Witt vectors $\xi \in \intwitt$, which is a natural analogue of Bridy's result \cite{Bridy} on formal power series $\xi \in \fld_q [[t]]$ algebraic over $\fld_q[t]$. 

To this end, we need to prepare some general lemmas:

\begin{lem}
\label{lem1}
\label{automorphism of psi}
 Let $X \in \C_K$. If $\psi_\idp$ is an automorphism on $X$, then the action of $\psi_\idp$ on $\F_K(X)$ is equal to that of some $\sigma \in G_K$. 
\end{lem}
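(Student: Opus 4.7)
The plan is to reduce the assertion to a component-wise analysis combined with class field theory. By the earlier remark on objects of $\C_K$, write $X \simeq L_1 \times \cdots \times L_r$ with each $L_i/K$ a finite abelian extension, and fix the maximal integral model $A = O_{L_1} \times \cdots \times O_{L_r}$ of $X$, on which the $\Lambda$-structure and its Frobenius-lift congruences persist. I would then proceed in three steps: (i) show that $\psi_\idp$ preserves each component $L_i$; (ii) show that $\idp$ is unramified in each $L_i$ and that $\psi_\idp|_{L_i}$ is the arithmetic Frobenius at $\idp$; and (iii) produce a single $\sigma \in G_K$ whose action on $\F_K(X)$ realizes all these Frobenii simultaneously.

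For step (i), note that $\psi_\idp$ as a ring automorphism of $X$ permutes the minimal idempotents $e_1, \ldots, e_r$, but the Frobenius-lift congruence $\psi_\idp(e_i) \equiv e_i^{N\idp} = e_i \pmod{\idp A}$ forces $\psi_\idp(e_i) = e_i$: any distinct minimal idempotent $e_j$ would differ from $e_i$ by an element whose projection to $O_{L_j}$ is $1$, a unit not lying in $\idp O_{L_j}$. Thus $\psi_\idp|_{L_i} \in \mathrm{Gal}(L_i/K)$. For step (ii), $\psi_\idp$ automatically preserves $O_{L_i}$ and by the congruence induces on $O_{L_i}/\idp O_{L_i}$ the $N\idp$-th power map; since $\psi_\idp$ is bijective, this reduction is bijective, which forces $O_{L_i}/\idp O_{L_i}$ to be reduced and hence $\idp$ to be unramified in $L_i$. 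Under unramified-ness, $\psi_\idp|_{L_i}$ is uniquely characterized as the element of $\mathrm{Gal}(L_i/K)$ acting as $N\idp$-th power on residue fields, namely the arithmetic Frobenius $\mathrm{Frob}_\idp$.

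For step (iii), let $L \subseteq K^{ab}$ be the compositum of the images of the $L_i$'s; then $\idp$ is unramified in $L$, so class field theory yields a single element $\mathrm{Frob}_\idp \in \mathrm{Gal}(L/K)$ whose restriction to each $L_i$-image coincides with $\psi_\idp|_{L_i}$. Lifting $\mathrm{Frob}_\idp$ to $\sigma \in G_K$ gives the desired element: for each $s \in \F_K(X)$ factoring through some $L_i$, the identity $\sigma \circ s = s \circ \psi_\idp$ reduces to the equality $s \circ \psi_\idp|_{L_i} \circ s^{-1} = \sigma|_{s(L_i)} = \mathrm{Frob}_\idp|_{s(L_i)}$ in $\mathrm{Gal}(s(L_i)/K)$. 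The main technical hurdle I anticipate is ensuring that the Frobenius-lift congruence indeed holds on the maximal integral model $O_{L_1} \times \cdots \times O_{L_r}$ when the given integral model $A$ of $X$ is smaller; this requires a standard extension argument using the $K$-linearity of $\psi_\idp$ and the $\Lambda$-stability of integral closures, but it deserves care.
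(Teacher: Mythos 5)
Your approach is genuinely different from the paper's. The paper completes at $\idp$ (forms $X_\idp = K_\idp \otimes X$, $A_\idp = O_{K_\idp}\otimes A$), observes that $\psi_\idp$ being an automorphism forces $S_{\mathrm{unr}} := \bigcap_n S\idp^n$ to be all of $S := \F_{K_\idp}(X_\idp)$, and then cites Theorem~1.1 of \cite{Borger_Smit18}, which is exactly the local structure theorem saying that on the unramified locus $\psi_\idp$ acts as the Frobenius. You instead argue globally via the decomposition $X \simeq L_1\times\cdots\times L_r$, showing $\psi_\idp$ fixes each minimal idempotent, deducing unramifiedness and Frobenius on each $L_i$, and then patching via CFT. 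Conceptually your steps (ii)--(iii) are the content that the Borger--de~Smit citation packages, so the route is reasonable.

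However, step (i) has a real gap which you flagged but underestimated. You write the maximal integral model as $A = O_{L_1}\times\cdots\times O_{L_r}$. This is false in general: the paper itself recalls (citing Theorem~3.4 of \cite{Borger_Smit08}) that the maximal integral model of $\ratf[x]/(x^n-1)$ is $\integer[x]/(x^n-1)$, which is a \emph{proper} suborder of $\prod_{d\mid n}\integer[\zeta_d]$ for $n\geq 2$. In general the minimal idempotents $e_i$ need not lie in the maximal integral model $A$, so the congruence $\psi_\idp(e_i)-e_i^{N\idp}\in\idp A$ is simply not available. Worse, the remedy you sketch --- ``extending the congruence to $\prod O_{L_i}$ using $K$-linearity and $\Lambda$-stability of integral closures'' --- cannot work, because $\prod O_{L_i}$ is \emph{not} an integral model in general: even when it is $\Lambda$-stable, the Frobenius-lift congruence typically fails there (e.g.\ for $\integer\times\integer$ inside $\ratf[x]/(x^2-1)$ at $p=2$). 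The same issue infects step (ii), since you reduce $O_{L_i}/\idp O_{L_i}$ rather than $A/\idp A$. To make your outline rigorous one should localize at $\idp$ from the start (as the paper does) or otherwise choose, prime to $\idp$, a denominator clearing $e_i$ into $A$ --- but whether such a denominator prime to $\idp$ exists is essentially the unramifiedness you are trying to prove, so the argument threatens to be circular. The clean way out is precisely the local theorem of \cite{Borger_Smit18} the paper invokes.
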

\begin{proof}
 The proof is done by completion and Theorem1.1, \cite{Borger_Smit18}. Let $A \leq X$ be the maximal integral model of $X$ and $K_\idp$ be the completion of $K$ at $\idp$; also let $X_\idp = K_\idp \otimes X$ and $A_\idp = O_{K_\idp} \otimes A$. Then $X_\idp$ together with $\psi_\idp$ is a $\Lambda_\idp$-ring with integral model $A_\idp$. Since $\psi_\idp$ is an automorphism on $X_\idp$, the set $S_\mathrm{unr} := \bigcap_{n=0}^\infty S \idp^n$, where $S := \F_{K_\idp} (X_\idp)$, is equal to $S$ itself. This implies that, by Theorem 1.1, \cite{Borger_Smit18}, the action of $\idp$ on $S_\mathrm{unr} = S$ is equal to the Frobenius $\sigma_\idp \in G_{K_\idp} / I_\idp$ (where $I_\idp \leq G_{K_\idp}$ denotes the inertia subgroup). 
\end{proof}

\begin{lem}
\label{lem2}
Let $X \in \C_K$ be a galois object. If $f \in \End(X)$ is an automorphism of the $\Lambda$-ring $X$, then the action of $f$ on $\F_K(X)$ by pullback is equal to the action of some $\sigma \in G_K$. 
\end{lem}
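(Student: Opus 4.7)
The plan is to use the galois structure of $X$ together with Lemma~\ref{lem1} to identify $f^*$ with the action of a single element of $G_K$. Fix a root $s_0 \in \F_K(X)$; by the description of galois objects of $\C_K$ recalled in Remark~\ref{galois object of C_K}, every $s \in \F_K(X)$ has the form $s = \tau \circ s_0 \circ \psi_\idb$ for some $\tau \in G_K$ and $\idb \in I_K$. Applying this to the element $s_0 \circ f \in \F_K(X)$ produces $\sigma \in G_K$ and $\ida \in I_K$ with $s_0 \circ f = \sigma \circ s_0 \circ \psi_\ida$.

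Next I would upgrade this pointwise identity at $s_0$ to the operator identity $f^* = \sigma \cdot \psi_\ida$ on all of $\F_K(X)$. For an arbitrary $s = \tau \circ s_0 \circ \psi_\idb$, using that $f$ is a $\Lambda$-ring homomorphism (hence commutes with every $\psi_\idb$) one computes
\begin{eqnarray*}
 f^*(s) \;=\; s \circ f &=& \tau \circ s_0 \circ \psi_\idb \circ f \;=\; \tau \circ (s_0 \circ f) \circ \psi_\idb \\
                        &=& \tau \circ (\sigma \circ s_0 \circ \psi_\ida) \circ \psi_\idb \;=\; (\tau \sigma) \circ s_0 \circ \psi_{\ida \idb}.
\end{eqnarray*}
Because every component of $X \in \C_K$ is abelian over $K$, the image of each element of $\F_K(X)$ lies in $K^{ab}$, so the $G_K$-action on $\F_K(X)$ factors through $G_K^{ab}$; hence $\tau \sigma$ and $\sigma \tau$ act identically on $\F_K(X)$, and the last display equals $\sigma \circ s \circ \psi_\ida$, i.e.\ the action of $(\sigma, \ida) \in G_K \times I_K$ on $s$.

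The third step is to trivialize the $I_K$-component. Since $f$ is an automorphism of $X$, the pullback $f^*$ is a bijection on $\F_K(X)$; the $\sigma$-action is bijective (it factors through $G_K^{ab} \simeq DR_K^\times$), so $\psi_\ida$ must act bijectively on $\F_K(X)$ as well. By the equivalence $\C_K^{op} \simeq \cl DR_K$ recalled in \S\ref{s2.1}, this forces $\psi_\ida: X \to X$ itself to be a $\Lambda$-ring automorphism. Writing $\ida = \idp_1^{n_1} \cdots \idp_r^{n_r}$ and $\psi_\ida = \psi_{\idp_1}^{n_1} \circ \cdots \circ \psi_{\idp_r}^{n_r}$ as a composition of commuting $K$-linear endomorphisms of the finite-dimensional $K$-algebra $X$, each $\psi_{\idp_i}$ must itself be a bijection on $X$.

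Finally, Lemma~\ref{lem1} applied to each such $\psi_{\idp_i}$ yields $\sigma_{\idp_i} \in G_K$ whose action on $\F_K(X)$ coincides with that of $\psi_{\idp_i}$. Combining this with the second step, $f^*$ agrees with the action of $\sigma \cdot \sigma_{\idp_1}^{n_1} \cdots \sigma_{\idp_r}^{n_r} \in G_K$, which is exactly the desired conclusion. The main obstacle I anticipate is the second step: correctly turning the identity at the single point $s_0$ into an operator identity on all of $\F_K(X)$, which hinges on both the commutation of $f$ with every $\psi_\idb$ and on the $G_K$-action factoring through $G_K^{ab}$; once that is in place, the remaining reductions and the invocation of Lemma~\ref{lem1} are routine.
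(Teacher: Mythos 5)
Your proof is correct and follows the same overall strategy as the paper's: identify the pullback $f^*$ with the action of some monoid element, use the automorphism hypothesis to force the $\psi_{\idp_i}$'s to act bijectively, and then invoke Lemma~\ref{lem1} on each prime factor. The one genuine divergence is in the first reduction. The paper cites the structural fact (from \cite{Uramoto17}) that for a galois object $X$ the $DR_K$-set $\F_K(X)$ is a finite quotient monoid $H$ of $DR_K$ with $\End(X) \simeq H$, so density of $I_K$ in $DR_K$ immediately identifies the action of $f$ with that of some $\psi_\ida$ alone, with no $G_K$-twist. You instead start from the root decomposition and obtain $s_0 \circ f = \sigma \circ s_0 \circ \psi_\ida$ with an extra $\sigma \in G_K$, then globalize this pointwise identity using the commutation of $f$ with each $\psi_\idb$ and the fact that the $G_K$-action factors through $G_K^{ab}$, and carry the $\sigma$ through to the end. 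Both routes are valid; yours is more hands-on, relying only on the root property already recalled in Remark~\ref{galois object of C_K} rather than the finer monoid structure of $\End(X)$, at the cost of a slightly longer bookkeeping argument. Your intermediate step deducing that $\psi_\ida$ is an automorphism of $X$ itself via the categorical equivalence, and then that each $\psi_{\idp_i}$ is bijective by a finite-dimensionality argument, fills in a detail the paper states tersely and is correct.
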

\begin{proof}
Let $f \in \End(X)$ be an automorphism. Since $X$ is galois and $I_K$ is dense in $DR_K$, the action of $f$ on $\F_K (X)$ by pullback is equal to that of $\psi_\ida$ for some $\ida \in I_K$. (To see this, recall the definition of galois objects, Definition 12, \S 3.2 \cite{Uramoto17}, and the fact that the galois objects of $\cl DR_K$ are those $DR_K$-sets which are of the form of finite quotients $DR_K \twoheadrightarrow H$, cf.\ Lemma 17, \S 4.1 \cite{Uramoto17}.) Now since $f$ is an automorphism, so is the action of $\psi_\ida$. Hence, if $\ida = \idp_1 \cdots \idp_n$, the actions of $\psi_{\idp_i}$'s are all automorphisms as well. By the above lemma, the actions of $\psi_{\idp_i}$'s come from some $\sigma_i \in G_K$; hence, so is the action of $\psi_\ida = \psi_{\idp_1} \cdots \psi_{\idp_n}$, which is equal to that of $f$. 
\end{proof}

To proceed further, let us recall the following notion from semigroup theory:

\begin{defn}[$\J$-equivalence]
 Let $M$ be a commutative monoid. For two elements $s ,s' \in M$, we denote as $s \leq_\J s'$ if we have an inclusion of the (two-sided) ideals $sM \subseteq s'M$, or in other words, there exists $m \in M$ such that $s = s'm$. Furthermore, we denote as $s \sim_\J s'$ and say that $s$ and $s'$ are \emph{$\J$-equivalent} if $s \leq_\J s'$ and $s' \leq_\J s$. The set of $\J$-equivalent classes is denoted as $M/\J$. 
\end{defn}

\begin{lem}
\label{lem3}
Let $X \in \C_K$ be galois with $s_1: X \rightarrow \bar{K} \in \F_K (X)$ its root. Then, $f, f' \in \End(X)$ are $\J$-equivalent in the (commutative) monoid $\End(X)$ if and only if the corresponding $s_1 \circ f, s_1 \circ f' \in \F_K(X)$ belong to the same $G_K$-component.
\end{lem}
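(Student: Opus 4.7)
The key tool is the bijection $\Phi : \End(X) \to \F_K(X)$, $f \mapsto s_1 \circ f$, that holds whenever $X$ is galois: indeed, by the description in Remark \ref{galois object of C_K}, every $s \in \F_K(X)$ can be written as $s = \sigma \circ s_1 \circ \psi_\ida = s_1 \circ (\tilde\sigma \circ \psi_\ida)$, where $\tilde\sigma \in \Auto(X)$ is the automorphism of $X$ induced by $\sigma \in G_K$ (available because every component of $X$ is abelian over $K$); and the element $f = \tilde\sigma \circ \psi_\ida$ is unique because the orbit of $s_1$ separates elements of $\End(X)$. Transporting the monoid structure on $\End(X)$ across $\Phi$ realizes $\F_K(X)$ as a finite commutative quotient monoid of $DR_K$, in which $\J$-equivalence in $\End(X)$ corresponds to $\J$-equivalence in $\F_K(X)$. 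Moreover, Lemma \ref{lem2} shows that $\End(X)^\times = \Auto(X)$ is precisely $\tilde G_K := \{\tilde\sigma : \sigma \in G_K\}$, so under $\Phi$ the $G_K$-orbits in $\F_K(X)$ correspond exactly to left-multiplication by units of $\End(X)$.

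For the $(\Leftarrow)$ direction, suppose $s_1 \circ f' = \sigma \circ s_1 \circ f$ for some $\sigma \in G_K$. Rewriting this as $s_1 \circ f' = s_1 \circ (\tilde\sigma \circ f)$ and invoking injectivity of $\Phi$, we obtain $f' = \tilde\sigma \circ f$. Since $\tilde\sigma$ is a unit in $\End(X)$ (with inverse $\widetilde{\sigma^{-1}}$), we have $f \in f' \End(X)$ and $f' \in f\End(X)$, hence $f \sim_\J f'$.

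For the harder $(\Rightarrow)$ direction, assume $f \sim_\J f'$ with witnesses $f = f' g$ and $f' = f g'$; under $\Phi$ this means $s_1 \circ f$ and $s_1 \circ f'$ generate the same principal ideal in the commutative monoid $\F_K(X)$. The strategy is to descend to $DR_K$, where the analogous statement is transparent. Using the presentation $DR_K \simeq \widehat O_K \times_{\widehat O_K^*} G_K^{ab}$ from \S \ref{s2.2}, one checks directly that two elements $[\rho,\sigma], [\rho',\sigma'] \in DR_K$ are $\J$-equivalent if and only if $\rho$ and $\rho'$ generate the same ideal in $\widehat O_K$, i.e., differ by a unit in $\widehat O_K^*$, i.e., the two elements differ by multiplication by a unit of $DR_K^\times \simeq G_K^{ab}$. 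Lifting $g$ to an element of $DR_K$ and using that the congruence $\sim_{s_1}$ defining $\F_K(X) = DR_K / \sim_{s_1}$ sends $DR_K^\times$ onto $\F_K(X)^\times$ (by Lemma \ref{lem2}), one shows that this characterization descends to $\F_K(X)$: $\J$-equivalent elements of $\F_K(X)$ differ by multiplication by an element of $\F_K(X)^\times$. Applying this to $s_1 \circ f$ and $s_1 \circ f'$ yields some $\tilde\sigma \in \tilde G_K$ with $s_1 \circ f' = \tilde\sigma \cdot (s_1 \circ f) = \sigma \circ s_1 \circ f$, so $s_1 \circ f$ and $s_1 \circ f'$ lie in the same $G_K$-orbit.

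The main obstacle is the last step: controlling how $\J$-equivalence behaves under the quotient $DR_K \twoheadrightarrow \F_K(X)$. A priori the congruence $\sim_{s_1}$ could coarsen $\J$-equivalence in ways that create spurious units and break the correspondence. The required stability uses the specific form of the stabilizer of the root $s_1$ and the fact that $\F_K(X)^\times = \tilde G_K$ (Lemma \ref{lem2}), which rules out this pathology; this is the delicate point that the rest of the argument ultimately rests on.
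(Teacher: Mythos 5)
Your $(\Leftarrow)$ direction is correct and, modulo phrasing, essentially equivalent to the paper's: the paper uses density of $I_K$ in $DR_K$ to trade $\sigma$ for some $\psi_\idb$ instead of lifting $\sigma$ to $\tilde\sigma \in \Auto(X)$ and invoking injectivity of $\Phi$, but the two routes are morally the same and both work.

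The $(\Rightarrow)$ direction is where the proposal has a genuine gap, and you in fact flag it yourself: you need that $\J$-equivalent elements of the finite quotient monoid $\F_K(X) \simeq \End(X)$ differ by a unit of that monoid. You correctly observe that this holds in $DR_K$ itself (using $DR_K \simeq \widehat O_K \times_{\widehat O_K^*} G_K^{ab}$, where $\J$-equivalence amounts to generating the same ideal in $\widehat O_K$ and hence differing by an element of $DR_K^\times$), but this property does \emph{not} descend automatically to quotient congruences: a congruence on a finite commutative monoid can identify elements across different $\J$-classes of $DR_K$, and lifts of $\J$-equivalent elements of $\F_K(X)$ need not be $\J$-equivalent upstairs. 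Saying ``the required stability uses the specific form of the stabilizer of the root $s_1$ and the fact that $\F_K(X)^\times = \tilde G_K$, which rules out this pathology'' names a hope, not an argument; Lemma \ref{lem2} identifies the units of $\End(X)$ with $G_K$-action, but by itself it does not tell you that a $\J$-equivalence witness $g$ (with $f = f' g$, $f' = f g'$) can be traded for a unit.

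The paper's actual proof supplies exactly the idea that closes this gap. Writing $\psi_\ida = \psi_{\ida'\idb}$ and $\psi_{\ida'} = \psi_{\ida\idb'}$, pass to the orbit $S := s_\ida\cdot DR_K = s_{\ida'}\cdot DR_K \subseteq \F_K(X)$. Because $DR_K$ is commutative, $S$ is a rooted, hence galois, object of $\cl DR_K$, and the right actions of $\idb$ and $\idb'$ on $S$ are mutually inverse bijections. Dualizing, $S$ corresponds to a quotient $\Lambda$-ring in $\C_K$ on which $\psi_\idb$ and $\psi_{\idb'}$ restrict to automorphisms, and \emph{now} Lemma \ref{lem2} applies to \emph{that smaller galois object} to show that the action of $\psi_{\idb'}$ on $S$ coincides with the action of some $\sigma' \in G_K$; since $s_{\ida'} = s_\ida\cdot\idb'$, this gives $s_{\ida'} = \sigma'\cdot s_\ida$, i.e.\ the same $G_K$-component. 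The point you were missing is that one should not try to lift $\J$-equivalence from $\F_K(X)$ back to $DR_K$, but should instead localize to the sub-$DR_K$-set $S$ on which the relevant shift is already invertible, and apply the automorphism-implies-Frobenius lemma there.
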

\begin{proof}
For the reason mentioned in the above lemma, we may put $f = \psi_\ida$  and $f' = \psi_{\ida'}$ for some $\ida, \ida' \in I_K$. First suppose that $s_\ida:= s_1 \circ \psi_\ida$ and $s_{\ida'} := s_1 \circ \psi_{\ida'}$ belong to the same $G_K$-component in $\F_K (X)$; that is, $s_\ida = \sigma \circ s_{\ida'}$ for some $\sigma \in G_K$. By the density of $I_K$ in $DR_K$, the action of $\sigma$ is equal to the action of $\psi_\idb$ for some $\idb \in I_K$, which means that $s_\ida = s_{\ida' \idb}$, hence we have $\psi_\ida \leq_\J \psi_\ida'$ in $\End(X)$. (This is because $(X, s_1)$ is now galois, thus the assignment $\End(X) \ni f \mapsto s_1 \circ f \in \F_K(X)$ is injective; cf.\ Proposition 4, \S 3.2 \cite{Uramoto17}.) The converse inequality $\psi_\ida' \leq_\J \psi_\ida$ is similar, thus, $\psi_\ida$ and $\psi_{\ida'}$ are $\J$-equivalent. Second suppose that $\psi_\ida$ and $\psi_{\ida'}$ are $\J$-equivalent in $\End(X)$, whence $\psi_\ida = \psi_{\ida' \idb}$ and $\psi_{\ida'} = \psi_{\ida \idb'}$ for some $\idb, \idb' \in I_K$. Dually, i.e.\  in terms of the $DR_K$-set $\F_K(X)$, this means that $s_\ida$ and $s_{\ida'} \in \F_K(X)$ are mutually accessible by the action of $DR_K$, that is, $s_\ida \cdot DR_K = s_{\ida'} \cdot DR_K$. We put $S: = s_\ida \cdot DR_K$ = $s_{\ida'} \cdot DR_K$. Then, note that this $S$ forms a rooted $DR_K$-set, hence, galois in $\cl DR_K$ (cf.\ Remark \ref{galois object of C_K}, \S \ref{s2.1}); and also that, by $\psi_\ida = \psi_{\ida' \idb}$ and $\psi_{\ida'} = \psi_{\ida \idb'}$, one sees that the actions of $\psi_\idb, \psi_{\idb'}$ on $S$ are (mutually inverse) automorphisms. Thus, by Lemma \ref{lem2} applied to (the $\Lambda$-ring in $\C_K$ dual to) this galois object $S \in \cl DR_K$, it follows that the actions of $\psi_\idb, \psi_{\idb'}$ on $S$ are equal to those of some $\sigma, \sigma' \in G_K$, which implies that $s_\ida = s_{\ida' \idb}$ and $s_{\ida'} = s_{\ida \idb'}$ belong to the same $G_K$-component. This completes the proof. 
\end{proof}

Now we study the structure of the galois objects $X_\xi$ for $\xi \in \intwitt$. (For simplicity, we shall study the structure of $X_\Xi$ only for singleton $\Xi = \{\xi\}$, but a similar result holds in general.) Since $X_\xi$ is finite etale over $K$, we have an isomorphism $X_\xi \simeq L_1 \times \cdots \times L_r$ for some finite extensions $L_i/K$. However, we do not yet quite know what and how many components $L_i$ each $X_\xi$ has. In the following, we first discuss this problem. In this relation, let us denote by $M_\xi$ the quotient monoid $I_K/ \equiv_\xi$, where $\ida \equiv_\xi \idb$ if and only if $\psi_\ida \xi = \psi_\idb \xi$; by Theorem 3.2, \cite{Uramoto18}, $M_\xi$ is a finite monoid.

\begin{prop}
\label{presentation of X_xi}
For any $\xi \in \intwitt$ we have the following isomorphism:
\begin{eqnarray}
 X_\xi &\simeq& \prod_{[\ida] \in M_\xi / \J} K(\xi_{\ida \idb} ; \idb \in I_K).
\end{eqnarray}
In particular, the number of components is equal to $\# (M_\xi/ \J)$. 
\end{prop}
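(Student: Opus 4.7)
The plan is to decompose $X_\xi$ through the standard dictionary between factors of a finite étale $K$-algebra and $G_K$-orbits on its spectrum $\F_K(X_\xi) = \Hom_K(X_\xi, \bar{K})$, and to translate those orbits into $\J$-classes in $\End(X_\xi)$ via Lemma~\ref{lem3}.

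First I would identify the endomorphism monoid $\End(X_\xi)$ with the finite commutative monoid $M_\xi$. Since $X_\xi$ is galois with root $s_1: \eta \mapsto \eta_1$ by Proposition~\ref{X_Xi is galois}, the assignment $f \mapsto s_1 \circ f$ embeds $\End(X_\xi)$ into $\F_K(X_\xi)$; passing through $\C_K^{op} \simeq \cl DR_K$ one sees that $\F_K(X_\xi) = s_1 \cdot DR_K$ is a finite rooted $DR_K$-set, so $\End(X_\xi)$ is a finite quotient monoid of $DR_K$. Density of $I_K$ in $DR_K$ then forces every endomorphism to be realized as some $\psi_\ida$ ($\ida \in I_K$); and because $X_\xi$ is generated as a $K$-algebra by the orbit $I_K\xi$ and the $\psi_\bullet$'s commute, $\psi_\ida$ and $\psi_{\ida'}$ coincide on $X_\xi$ iff they coincide on the generator $\xi$, i.e.\ iff $\ida \equiv_\xi \ida'$. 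Hence $\End(X_\xi) \simeq M_\xi$ as commutative monoids.

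Next, Lemma~\ref{lem3} yields a bijection between the $G_K$-orbits of $\F_K(X_\xi)$ and $M_\xi/\J$, under which $[\ida] \in M_\xi/\J$ corresponds to $G_K \cdot (s_1 \circ \psi_\ida)$. For each such orbit, the corresponding factor in the étale decomposition $X_\xi \simeq \prod_i L_i$ is the image of $X_\xi$ in $\bar{K}$ under any of its elements; computing with $s = s_1 \circ \psi_\ida$ and using that $X_\xi$ is generated over $K$ by $\{\psi_\idb \xi : \idb \in I_K\}$ gives
\begin{equation*}
 (s_1 \circ \psi_\ida)(X_\xi) \;=\; K\bigl( (s_1 \circ \psi_\ida)(\psi_\idb \xi) : \idb \in I_K \bigr) \;=\; K(\xi_{\ida\idb};\ \idb \in I_K) \;\subseteq\; \bar{K}.
\end{equation*}
This image is indeed a field (as $X_\xi$ is finite étale over $K$) and depends only on $[\ida] \in M_\xi/\J$, since $\ida \sim_\J \ida'$ in $M_\xi$ forces $\{\xi_{\ida\idb}\}_{\idb} = \{\xi_{\ida'\idb}\}_{\idb}$ as subsets of $\bar{K}$. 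Assembling these $s_1 \circ \psi_\ida$ over representatives into a single $K$-algebra map $X_\xi \to \prod_{[\ida] \in M_\xi/\J} K(\xi_{\ida\idb};\ \idb \in I_K)$ yields an injection (since $\F_K(X_\xi)$ separates points), which is surjective by the dimension count $\dim_K X_\xi = \#\F_K(X_\xi) = \sum_{[\ida]} [K(\xi_{\ida\idb};\ \idb \in I_K) : K]$, giving the asserted isomorphism.

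The main technical hurdle is the first step: pinning down $\End(X_\xi)$ as precisely $M_\xi$ (neither larger nor smaller) requires combining the galois property of $X_\xi$, the density of $I_K$ in $DR_K$ (to see that every endomorphism is some $\psi_\ida$), and the generation of $X_\xi$ by $I_K\xi$ (to reduce $\psi_\ida = \psi_{\ida'}$ on $X_\xi$ to equality on the single generator $\xi$). Once this is in place, Lemma~\ref{lem3} transports the orbit count into $\#(M_\xi/\J)$ and the rest is a routine separation/dimension argument.
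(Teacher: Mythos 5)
Your argument is correct and takes essentially the same route as the paper's: both rest on Lemma~\ref{lem3} to match $G_K$-orbits in $\F_K(X_\xi)$ with $\J$-classes in $M_\xi$, and both compute each component as $K(\xi_{\ida\idb};\idb \in I_K)$ by evaluating $s_1 \circ \psi_\ida$ on the generators $\psi_\idb\xi$. The only difference is cosmetic: you make the identification $\End(X_\xi)\simeq M_\xi$ explicit and finish by a dimension count, while the paper works directly with the map $\eta\mapsto(\eta_{\ida_i})_i$, checks injectivity pointwise, and then identifies the étale components structurally.
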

\begin{proof}
 For short let us put the right hand side as $Y_\xi :=  \prod_{[\ida] \in M_\xi / \J} K(\xi_{\ida \idb} ; \idb \in I_K)$; we construct the target isomorphism $f: X_\xi \rightarrow Y_\xi$. Let $M_\xi / \J = \{[\ida_1], \cdots, [\ida_r]\}$, and for each $[\ida_i] \in M_\xi / \J$, denote its representative as $\ida_i \in I_K$. Then we can define $f: X_\xi \rightarrow Y_\xi$ by $f (\eta) := (\eta_{\ida_i})_{i=1}^r$. To prove the lemma, note first that this $f$ is injective: If $f(\eta) = f(\eta')$ then $\eta_{\ida_i} = \eta'_{\ida_i}$ for every $i = 1, \cdots, r$. But for each $\ida \in I_K$ we have $[\ida] = [\ida_i]$ for some $i$; this means that, for every $\zeta \in X_\xi$, we have $\zeta_\ida = \zeta_{\ida_i}^\sigma$ for some $\sigma \in G_K$. (In fact $[\ida] = [\ida_i]$ implies $\ida = \ida_i \idb$ and $\ida_i = \ida \idb'$ for some $\idb, \idb' \in I_K$ in $M_\xi$, whence $\psi_\ida = \psi_{\ida_i} \psi_\idb$ and $\psi_{\ida_i} = \psi_\ida \psi_{\idb'}$ on $X_\xi$; then apply Lemma \ref{lem3} to the root $s_1: X_\xi \ni \zeta \mapsto \zeta_1 \in \bar{K}$ where $1$ is the unit in $I_K$.) Thus $\eta_\ida = \eta_{\ida_i}^\sigma = {\eta'_{\ida_i}}^\sigma = \eta'_\ida$ for every $\ida \in I_K$, which implies $\eta = \eta'$ as requested. 
 
Finally we see that $f$ is surjective. First note that, since $X_\xi$ is finite etale over $K$, we have an isomorphism $X_\xi \simeq L_1 \times \cdots \times L_m$ for some finite extensions $L_i / K$; and each $L_i$ is obtained as the image of $X_\xi$ under some $s_i : X_\xi \rightarrow \bar{K} \in \F_K(X_\xi)$. As shown in Proposition \ref{X_Xi is galois}, each $s_i \in \F_K(X_\xi)$ is given as $s_i(\eta) = \eta_{\ida_i}^{\sigma_i}$ ($\forall \eta \in X_\xi$) for some $\ida_i \in I_K$ and $\sigma_i \in G_K$, namely, $s_i$ is $G_K$-equivalent to $s'_i \in \F_K(X_\xi)$ given by $s'_i (\eta) = \eta_{\ida_i}$. So, the image $L_i$ of $X_\xi$ under $s_i$ is isomorphic to $K(\eta_{\ida_i} ; \eta \in X_\xi)$, which is further isomorphic to $K(\xi_{\ida_i \idb} ; \idb \in I_K)$ because $X_\xi$ is generated by $\psi_\idb \xi$'s ($\idb \in I_K$) over $K$. By Lemma \ref{lem3}, the two maps $s'_i$ and $s'_j$ are $G_K$-equivalent if and only if $[\ida_i] = [\ida_j]$. 
This means that the $G_K$-equivalent classes of $s \in \F_K(X_\xi)$ are classified precisely by the set $M_\xi / \J = \{[\ida_1], \cdots, [\ida_r]\}$ with $s'_i: \eta \mapsto \eta_{\ida_i} \in \F_K(X_\xi)$ for each $[\ida_i] \in M_\xi / \J$ being representative. Therefore, one concludes that $m = r$ and $L_i \simeq K(\xi_{\ida_i \idb}; \idb \in I_K)$, which completes the proof. 
\end{proof}

Finally, we determine the \emph{state complexity} of integral Witt vectors $\xi \in \intwitt$ (cf.\ Definition \ref{state complexity}) as a natural analogue of the result of Bridy \cite{Bridy} for formal power series $\xi \in \fld_q[[t]]$, where using the Riemann-Roch theorem he gave a sharp estimate of the state complexity of a formal power series $\xi \in \fld_q[[t]]$ algebraic over $\fld_q[t]$ in terms of the dimension $\dim_{\fld_q(t)} X_\xi$ of the function field $X_\xi$ of the curve generated by $\xi$ over $\fld_q(t)$. Analogously, we now show that the state complexity of an integral Witt vector $\xi \in \intwitt$ is \emph{equal} to the dimension $\dim_K X_\xi$ of the $\Lambda$-ring $X_\xi$. In fact, this holds for general $\Xi \subseteq \intwitt$. 

To be precise, the \emph{state complexity} of a finite set $\Xi \subseteq \intwitt$ is defined as follows: (See \S 3 \cite{Uramoto18} for the concept of DFA's and \emph{DFAO's} (i.e.\ \emph{deterministic finite automata with output}), and how they generate integral Witt vectors.)

\begin{defn}[state complexity]
\label{state complexity}
 Let $\Xi \subseteq \intwitt$ be any finite set of integral Witt vectors. We say that a DFA $\dfa$ \emph{generates $\Xi$} if for every $\xi \in \Xi$ there exists an output function $\tau: S_\dfa \rightarrow O_{\bar{K}}$ such that $\dfa_\tau$ generates $\xi$. The \emph{state complexity of $\Xi$}, denoted by $c_\Xi$, is then defined as the minimum size $\min \# S_\dfa$ of the state set $S_\dfa$ of those DFA $\dfa$ which can generate $\Xi$. 
\end{defn}

\begin{prop}[estimate of state complexity]
\label{estimate of state complexity}
 For any finite $\Xi \subseteq \intwitt$, we have the following identity:
 \begin{eqnarray}
   c_\Xi &=& \dim_K X_\Xi. 
 \end{eqnarray}
 In particular, $c_\xi = \dim_K X_\xi$ for $\xi \in \intwitt$.
\end{prop}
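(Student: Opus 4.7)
The plan is to route the proof through the finite monoid $M_\Xi := I_K/{\equiv_\Xi}$ already used in the proof of Proposition \ref{X_Xi is galois} (where $\ida \equiv_\Xi \idb$ iff $\psi_\ida \xi = \psi_\idb \xi$ for every $\xi \in \Xi$; this monoid is finite by the arithmetic Christol theorem, Theorem 3.4 \cite{Uramoto18}). I will first establish $c_\Xi = |M_\Xi|$ by an automata-theoretic (Myhill--Nerode) argument, and then $|M_\Xi| = \dim_K X_\Xi$ by a density argument that exploits the galois description of $X_\Xi$ from \S \ref{s3.1}.

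For the inequality $c_\Xi \leq |M_\Xi|$ I would write down the canonical DFA $\dfa_\Xi$ with state set $M_\Xi$, initial state $[1]$, transition $[\ida] \cdot \idp := [\ida \idp]$, and, for each $\xi \in \Xi$, output function $\tau_\xi([\ida]) := \xi_\ida$. The latter is well-defined because $\ida \equiv_\Xi \idb$ forces $\xi_\ida = (\psi_\ida \xi)_1 = (\psi_\idb \xi)_1 = \xi_\idb$, and by construction $\dfa_\Xi$ equipped with $\tau_\xi$ generates $\xi$ for each $\xi \in \Xi$. For the converse $c_\Xi \geq |M_\Xi|$, given any DFA $\dfa$ with initial state $q_0$ that generates $\Xi$, consider the reached-state map $\phi \colon I_K \to S_\dfa$, $\ida \mapsto \delta_\ida(q_0)$. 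If $\phi(\ida) = \phi(\idb)$ then, for every $\idc \in I_K$ and $\xi \in \Xi$, we compute $\xi_{\ida \idc} = \tau_\xi(\delta_\idc(\phi(\ida))) = \tau_\xi(\delta_\idc(\phi(\idb))) = \xi_{\idb \idc}$, which amounts to $\psi_\ida \xi = \psi_\idb \xi$, i.e., $\ida \equiv_\Xi \idb$. Hence $\phi$ factors through an injection $M_\Xi \hookrightarrow S_\dfa$, giving $|S_\dfa| \geq |M_\Xi|$.

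For the equality $|M_\Xi| = \dim_K X_\Xi$, recall from Proposition \ref{X_Xi is galois} that $X_\Xi$ is galois in $\C_K$ with root $s_0 \colon \eta \mapsto \eta_1$. Under the equivalence $\C_K^{op} \simeq \cl DR_K$ this means $\F_K(X_\Xi) = s_0 \cdot DR_K$, so $\dim_K X_\Xi = |s_0 \cdot DR_K|$. Since $\F_K(X_\Xi)$ is discrete and the $DR_K$-action is continuous, and since $I_K$ is dense in $DR_K$, we get $s_0 \cdot DR_K = s_0 \cdot I_K$. The cardinality $|s_0 \cdot I_K|$ equals $|I_K/{\sim}|$ for the stabilizer congruence $\ida \sim \idb \Leftrightarrow s_0 \circ \psi_\ida = s_0 \circ \psi_\idb$; using that $X_\Xi$ is generated as a $K$-algebra by $\{\psi_\idc \xi : \xi \in \Xi, \idc \in I_K\}$ and that the embedding $X_\Xi \hookrightarrow \bar{K}^{I_K}$ preserves componentwise operations, one checks that $\sim$ coincides with $\equiv_\Xi$. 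Combining everything yields $c_\Xi = |M_\Xi| = \dim_K X_\Xi$. The main obstacle is this last identification of $\sim$ with $\equiv_\Xi$: one must verify that the congruence, a priori determined by the action on all of $X_\Xi$, is already forced by its restriction to the generators $\psi_\idc \xi$, which rests squarely on the componentwise ring structure inherited from $\bar{K}^{I_K}$.
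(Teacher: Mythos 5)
Your proof is correct and is essentially the same argument as the paper's: your intermediate finite monoid $M_\Xi = I_K/{\equiv_\Xi}$ is exactly the $\End(X_\Xi)$ that the paper works with (indeed $\ida \equiv_\Xi \idb$ iff $\psi_\ida = \psi_\idb$ on $X_\Xi$, since $X_\Xi$ is generated by $I_K \Xi$), and both proofs deduce $\dim_K X_\Xi = \#\F_K(X_\Xi) = \#\End(X_\Xi)$ from the galois property, use the canonical DFA to get $c_\Xi \leq \#\F_K(X_\Xi)$, and use minimality of the DFA together with density of $I_K$ in $DR_K$ for the reverse inequality. The only issue is a small technical slip in your lower bound $c_\Xi \geq |M_\Xi|$: the reached-state map should be $\phi\colon P_K^* \to S_\dfa$, $w \mapsto \delta_w(q_0)$, not $\phi\colon I_K \to S_\dfa$ — the DFA alphabet is $P_K$ and its transition need not commute, so $\delta_\ida$ is not well-defined on $I_K$ a priori (only the \emph{outputs}, not the reached states, are forced to depend only on $\ida_w$). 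The fix is routine: $\phi(u) = \phi(v)$ still forces $\ida_u \equiv_\Xi \ida_v$ by reading all continuations $w \in P_K^*$, so the surjection $P_K^* \twoheadrightarrow M_\Xi$ factors through $\phi$, giving $|S_\dfa| \geq |\imm(\phi)| \geq |M_\Xi|$.
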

\begin{proof}
 Firstly note that we have $\dim_K X_\Xi = \# \F_K (X_\Xi)$. Also it is easy to see that $\F_K(X_\Xi)$ forms (the state set of) a DFA over $P_K$ that generates $\Xi$; thus we have the one-side inequality:
\begin{eqnarray} 
    c_\Xi &\leq& \# \F_K(X_\Xi)\\
             & = & \dim_K X_\Xi. 
\end{eqnarray}
To prove the inverse inequality, note that by Proposition \ref{X_Xi is galois}, we know that $X_\Xi$ is a galois object; hence, $\# \F_K(X_\Xi) = \# \End(X_\Xi)$. Therefore, it suffices to prove:
\begin{eqnarray}
   c_\Xi &\geq& \# \End(X_\Xi). 
\end{eqnarray}
To this end, let $\dfa = (S, \delta, s_0)$ be a minimum DFA generating $\Xi$. Then, by the minimality, every $s \in S$ is represented as $s = s_0 \cdot u$ for some $u = \idp_1 \cdots \idp_n \in P_K^*$. Also, we can see that if $s_0 \cdot u = s_0 \cdot v$ for $u, v \in P_K^*$, then $\psi_{\ida_u} \xi = \psi_{\ida_v} \xi$ for every $\xi \in \Xi$, where $\ida_w = \idp_1 \cdots \idp_n \in I_K$ for $w = \idp_1 \cdots \idp_n \in P_K^*$. In fact, for each $\xi \in \Xi$ there exists an output function $\tau: S \rightarrow O_{\bar{K}}$ such that for every $w \in P_K^*$:
\begin{eqnarray}
  (\psi_{\ida_u} \xi)_{\ida_w} &=& \xi_{\ida_{uw}} \\
  &=& \tau ((s_0 \cdot u) \cdot w) \\
  &=& \tau ((s_0 \cdot v) \cdot w) \\
  &=& (\psi_{\ida_v} \xi)_{\ida_w},
\end{eqnarray}
which means that $\psi_{\ida_u} \xi = \psi_{\ida_v} \xi$. Since $X_\Xi$ is generated by $I_K \Xi$, this then implies that $\psi_{\ida_u} = \psi_{\ida_v}$ on $X_\Xi$; hence, the assignment $S \ni s = s_0 \cdot u \mapsto \psi_{\ida_u} \in \End(X_\Xi)$ is well-defined. Furthermore, by the fact that $I_K \hookrightarrow DR_K$ is dense, this assignment $S \rightarrow \End(X_\Xi)$ is in fact surjective, which proves the desired inequality $c_\Xi = \# S \geq \# \End(X_\Xi)$. 
\end{proof}

\begin{rem}
\label{geometric background}
Note that the identity $c_\xi = \dim_K X_\xi$ relates seemingly unrelated quantities, namely the state complexity $c_\xi$ and the dimension $\dim_K X_\xi$ of the $K$-algebra $X_\xi$. In fact, while the former is just the size of the orbit $I_K \xi$ under the action of the Frobenius $\psi_\idp$, the latter is the quantity relevant to the algebraic degree of its coefficients $\xi_\ida$'s. As we discuss below, this identity seems related to the difficulty of actually constructing integral Witt vectors; to our thought, this is because there must be some geometric reason for why the coefficients $\xi_\ida$'s of integral Witt vectors $\xi$ distribute as they do. Concerning this, it will be meaningful to remark that the \emph{$j$-invariants} $j(\ida)$ in the theory of complex multiplication actually constitute an example of integral Witt vector; this is proved from their reciprocity law (cf.\ \S \ref{s4.2}).
\end{rem}

\subsection{The structure of $W_\integer^a (\bar{\integer})$}
\label{s3.3}
As an immediate consequence of the above general development, applied to the case $K=\ratf$, this section concludes with a presentation of the ring $W_\integer^a (\bar{\integer})$, which asserts that $W_\integer^a (\bar{\integer})$ is isomorphic to the group-ring $\integer[\ratf/ \integer]$. After this presentation we proceed to the case when $K$ is an imaginary quadratic field. 

\begin{cor}
\label{integral}
 The ring $W_\integer^a (\bar{\integer})$ is isomorphic to the group-ring $\integer[\ratf/\integer]$. 
\end{cor}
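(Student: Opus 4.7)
The plan is to apply Theorem \ref{E_K} in combination with the explicit analysis of Borger and de Smit \cite{Borger_Smit08} for the rational base case. First I would invoke Theorem \ref{E_K} to represent $\intwitt$ as the direct limit of the maximal integral models of the galois objects of $\C_\ratf$. Since $DR_\ratf \simeq \widehat{\integer}$ (see \S \ref{s2.1}, also \cite{Borger_Smit08}), the rooted (hence galois) $DR_\ratf$-sets are cofinally the finite cyclic quotients $\widehat{\integer} \twoheadrightarrow \integer/N\integer$; by the equivalence $\C_\ratf^{op} \simeq \cl DR_\ratf$ (Theorem 1.2, \cite{Borger_Smit11}), these correspond to the cyclotomic $\Lambda$-rings $X_N := \ratf[x]/(x^N - 1)$ with $\psi_p(x) = x^p$. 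Choosing the root $s_1 : X_N \rightarrow \bar{\ratf}$, $x \mapsto \zeta_N$, the Witt-vector embedding $X_N \hookrightarrow (\bar{\ratf})^{\nat}$, $\eta \mapsto (s_1(\psi_n \eta))_n$, then sends $x$ to the vector $\zeta^{(1/N)} := (\zeta_N^n)_n$.

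Next I would invoke the results of Borger and de Smit \cite{Borger_Smit08} to identify the maximal integral model of $X_N$ with the group ring $A_N = \integer[x]/(x^N - 1) \simeq \integer[\integer/N\integer]$; under the above embedding this is exactly the $\integer$-subring generated by $\zeta^{(1/N)}$, equivalently the $\integer$-span of $\{\zeta^{(k/N)} : 0 \leq k < N\} \subset (\bar{\ratf})^{\nat}$. The transition maps $A_N \hookrightarrow A_M$ for $N \mid M$ arising from the surjections $\integer/M\integer \twoheadrightarrow \integer/N\integer$ correspond to the inclusions $\tfrac{1}{N}\integer/\integer \hookrightarrow \tfrac{1}{M}\integer/\integer$ inside $\ratf/\integer$, so that taking the colimit yields
\begin{equation}
 \intwitt \;\simeq\; \colim_N A_N \;\simeq\; \colim_N \integer\bigl[\tfrac{1}{N}\integer/\integer\bigr] \;=\; \integer[\ratf/\integer],
\end{equation}
with the explicit isomorphism sending the class $[\gamma] \in \ratf/\integer$ to the integral Witt vector $\zeta^{(\gamma)} = (e^{2\pi i \gamma n})_n$.

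The hard part, in my view, is the maximality assertion for $A_N$ as an integral model of $X_N$. Checking that $A_N$ is \emph{an} integral model is a routine Frobenius-congruence computation using Fermat's little theorem (writing $\xi = \sum a_i x^i$ and expanding $\psi_p(\xi) - \xi^p$). However, ruling out strictly larger $\Lambda$-stable $\integer$-subalgebras of $X_N$ satisfying $\psi_p(a) \equiv a^p \pmod{p}$ is more delicate: notably, the full integral closure $\prod_{d \mid N} \integer[\zeta_d]$ of $\integer$ inside $X_N = \prod_{d \mid N} \ratf(\zeta_d)$ strictly contains $A_N$ but fails the Frobenius congruence on the orthogonal idempotents (already visible for $N = 2$, where $\psi_2$ sends $(a, b)$ to $(a, a)$), so the desired maximality must be argued through the structural results of \cite{Borger_Smit08}. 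Once maximality is granted, the direct-limit calculation and the explicit identification $[\gamma] \mapsto \zeta^{(\gamma)}$ are then straightforward.
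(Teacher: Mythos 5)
Your proof is correct and follows essentially the same route as the paper's: both arguments invoke Theorem~\ref{E_K} to present the ring as a colimit over galois objects, identify the galois objects of $\C_\ratf$ with the cyclotomic $\Lambda$-rings $\ratf[x]/(x^N-1)$ via $DR_\ratf \simeq \widehat{\integer}$, appeal to Theorem 3.4 of \cite{Borger_Smit08} for the maximality of $\integer[x]/(x^N-1)$ as integral model, and then take the colimit with $1/N \mapsto x$ to land on $\integer[\ratf/\integer]$. The only cosmetic difference is ordering: the paper first establishes $E_\ratf \simeq \ratf[\ratf/\integer]$ and then restricts to the integral subring, whereas you argue directly at the level of $\intwitt$; your aside pinpointing why the full integral closure $\prod_{d\mid N}\integer[\zeta_d]$ fails the Frobenius congruence (checked correctly at $N=2$) is a nice sanity check, though both you and the paper ultimately defer the maximality claim to \cite{Borger_Smit08}.
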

\begin{proof}
We first see that $E_\ratf = \ratf \otimes W_\integer^a (\bar{\integer})$ is isomorphic to the group-ring $\ratf[\ratf/\integer]$. In fact, by the result of Borger and de Smit \cite{Borger_Smit08} combined with our result in \S \ref{s3.1}, we know that $E_\ratf$ is isomorphic to the $\ratf$-algebra given by the direct limit $\lim_n \ratf[x] / (x^n -1) = \bigcup_n \ratf[x]/(x^n -1)$; here this direct limit of $\ratf$-algebras $\ratf[x]/(x^n - 1)$ is given by  the following embeddings, for positive integers $n, m$:
\begin{eqnarray}
\ratf[x]/(x^n -1) &\rightarrow& \ratf[x]/(x^{nm} - 1) \\
x &\longmapsto& x^m.
\end{eqnarray}
The group-ring $\ratf[\ratf/\integer]$, on the other hand, is also isomorphic to this $\ratf$-algebra $\bigcup_n \ratf[x]/(x^n -1)$ by the correspondence:
\begin{eqnarray} 
 \ratf[\ratf/\integer] &\rightarrow& \bigcup_n \ratf[x]/(x^n - 1) \\
 1/N &\longmapsto& x \in \ratf[x]/ (x^N -1). 
\end{eqnarray}
Hence we have $E_\ratf \simeq \ratf[\ratf/\integer]$. Finally we see that, in this isomorphism, the subring $W_\integer^a (\bar{\integer}) \subseteq E_\ratf$ corresponds to $\integer[\ratf/\integer] \subseteq \ratf[\ratf/\integer]$. Indeed, as proved in Theorem 3.4 \cite{Borger_Smit08}, the maximal integral model of the $\Lambda$-ring $\ratf[x]/(x^n -1)$ is $\integer[x]/(x^n -1)$ for each $n$; and thus, the subring $\bigcup_n \integer[x]/(x^n -1) \subseteq \bigcup_n \ratf[x]/(x^n -1)$ corresponds to $W_\integer^a (\bar{\integer}) \subseteq E_\ratf$. On the other hand, we see that $\bigcup_n \integer[x]/(x^n -1)$ corresponds to $\integer[\ratf/\integer]$ under the above isomorphism $\ratf[\ratf/\integer] \simeq \bigcup_n \ratf[x]/(x^n -1)$. Consequently, we obtain the desired isomorphism $W_\integer^a(\bar{\integer}) \simeq \integer[\ratf/\integer]$. This completes the proof. 
\end{proof}

\begin{rem}
More explicitly, in the above isomorphism $W_\integer^a(\bar{\integer}) \simeq \integer[\ratf/\integer]$, each $\gamma \in \ratf/\integer \subseteq \integer[\ratf/\integer]$ corresponds to the following Witt vector $\zeta^{(\gamma)} \in W_\integer^a(\bar{\integer}) \subseteq (\ratf^{ab})^{\nat}$:
\begin{eqnarray}
 \zeta^{(\gamma)} &:=& (e^{2\pi i \gamma n})_{n \in \nat}. 
\end{eqnarray}
Therefore we can conclude that, in general, the integral Witt vectors $\xi \in W_\integer^a(\bar{\integer})$ are precisely the $\integer$-linear combinations of these $\zeta^{(\gamma)}, \gamma \in \ratf/\integer$ (while the algebraic Witt vectors $\xi \in E_\ratf$ are the $\ratf$-linear combinations of $\zeta^{(\gamma)}$'s). This provides a complete classification of the integral Witt vectors over $\integer$. 
\end{rem}

\section{The modularity theorem}
\label{s4}
In some sense, the above isomorphism $E_\ratf \simeq \ratf[\ratf/\integer]$ clarifies how the coefficients $\xi_n$ of an algebraic Witt vector $\xi$ over $\ratf$ are correlated; they are essentially correlated as special values $e^{2\pi i \gamma n}$ for fixed $\gamma \in \ratf/\integer$ of the function $e^z$. 

The purpose of this section is to prove an analogue of this result in the case where $K$ is an imaginary quadratic field. To be precise, this section proves that the $K$-algebra $E_K = K \otimes \intwitt$ for an imaginary quadratic field $K$ is isomorphic to (or actually coincides with) the $K$-algebra $M_K$ that consists of \emph{modular vectors}; technically speaking, modular vectors are defined by special values of certain deformations of modular functions (\S \ref{s4.2}). Our target theorem, which we call the \emph{modularity theorem}, exhibits that such vectors arising from deformations of modular functions always define algebraic Witt vectors, and conversely, every algebraic Witt vector arises in this way (\S \ref{s4.3}). After this, we then give a specific set of generators of $E_K = M_K$ arising from the \emph{Fricke functions} $f_a$ (cf.\ \S \ref{s4.4}).

\subsection{Modular vectors}
\label{s4.2}
Before developing general results, let us start with some motivating observation. As mentioned in Remark \ref{geometric background}, the \emph{j}-function, i.e.\ a prototypical example of modular function, naturally induces an example of integral Witt vector; indeed, as we see below, this fact follows from \emph{Hasse's reciprocity law} of the $j$-invariants $j(\ida)$ (cf.\ \cite{Shimura}). This basic observation gives us yet another way to look at integral Witt vectors and the $j$-function; and then, leads us to the general consideration of \emph{modular vectors} developed below. 

To be more specific, the $j$-function induces an integral Witt vector $\iota \in \intwitt$ by its special values in the following way: 
\begin{prop}
\label{witt vector from j function}
Define $\iota = (\iota_\ida) \in O_{\bar{K}}^{I_K}$ by $\iota_\ida := j(\ida^{-1})$ for each $\ida \in I_K$; and let $H_K$ denote the Hilbert class field of $K$. Then $\iota$ is an integral Witt vector with coefficients in $O_{H_K}$:
\begin{eqnarray}
 \iota = (\iota_\ida) &\in& W_{O_K}^a(O_{H_K}).
\end{eqnarray}
\end{prop}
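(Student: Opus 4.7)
The plan is to realize $\iota$ as arising from the integral $\Lambda$-ring structure on $O_{H_K}$, and then to invoke the construction of integral Witt vectors from integral models of objects of $\C_K$ given in the proof of Proposition \ref{galois is X_Xi}.

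First, I would equip the Hilbert class field $H_K$ with the $\Lambda$-ring structure $\psi_\idp := \sigma_\idp$, where $\sigma_\idp \in \mathrm{Gal}(H_K/K)$ is the Artin symbol attached to $\idp \in P_K$. Since $H_K/K$ is everywhere unramified, each $\sigma_\idp$ is well-defined; they commute because $\mathrm{Gal}(H_K/K)$ is abelian; and for every prime $\idP$ of $H_K$ above $\idp$, $\sigma_\idp$ reduces to the $N\idp$-power Frobenius on $O_{H_K}/\idP$, so $\sigma_\idp(x) \equiv x^{N\idp} \pmod{\idP}$ for all $x \in O_{H_K}$. Using that $\idp O_{H_K} = \prod_{\idP \mid \idp} \idP$ in the unramified case, this yields $\sigma_\idp(x) \equiv x^{N\idp} \pmod{\idp O_{H_K}}$, so each $\psi_\idp$ is a Frobenius lift. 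Being stable under every $\sigma_\idp$, finitely generated as an $O_K$-module, and spanning $H_K$ over $K$, the ring $O_{H_K}$ provides an integral model, showing $H_K \in \C_K$.

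Next, I would invoke the classical theory of complex multiplication (cf.\ \cite{Shimura}) for two ingredients: (i) every singular modulus $j(\ida^{-1})$ is an algebraic integer of $H_K$, so that $\iota$ a priori lies in $O_{H_K}^{I_K}$; and (ii) Hasse's reciprocity law
\begin{equation}
 \sigma_\ida\bigl(j(\idb)\bigr) \;=\; j(\ida^{-1} \idb) \quad (\ida \in I_K, \; \idb \text{ a fractional ideal of } K),
\end{equation}
so that on setting $x := j(O_K) \in O_{H_K}$ one has $\psi_\ida x = \sigma_\ida\bigl(j(O_K)\bigr) = j(\ida^{-1}) = \iota_\ida$ for every $\ida \in I_K$.

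Finally, I would apply the construction recalled in the proof of Proposition \ref{galois is X_Xi}: for any integral model $A$ of a $\Lambda$-ring $X \in \C_K$, any $x \in A$, and any $s \in \Hom_{O_K}(A, O_{\bar K})$, the vector $x^s := (s(\psi_\ida x))_\ida \in O_{\bar K}^{I_K}$ lies in $\intwitt$. Specializing to $A = O_{H_K}$, $X = H_K$, $x = j(O_K)$, and $s$ the tautological inclusion $O_{H_K} \hookrightarrow O_{\bar K}$, the resulting vector is precisely $\iota$; since all its components belong to $O_{H_K}$, we conclude $\iota \in W_{O_K}^a(O_{H_K})$. The principal point of care is the first step — namely verifying that $H_K$ with the $\sigma_\idp$'s really defines an object of $\C_K$ with integral model $O_{H_K}$ — but this is a direct consequence of the unramifiedness of $H_K/K$ together with standard properties of the Artin symbol; everything else then follows from the general machinery established in \S \ref{s3}.
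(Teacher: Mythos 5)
Your proof is correct and follows essentially the same route as the paper's: equip $H_K$ with the $\Lambda$-structure given by Artin symbols so that $O_{H_K}$ is an integral model, then apply the construction $x \mapsto x^s$ from Proposition \ref{galois is X_Xi} to $x = j(O_K)$ and the inclusion $s: O_{H_K} \hookrightarrow O_{\bar{K}}$, and finally identify the components using Hasse's reciprocity law. The paper states the membership $H_K \in \C_K$ with $O_{H_K}$ as maximal integral model more tersely, but your extra verification that $\sigma_\idp$ is a Frobenius lift is just an unpacking of the same point.
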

\begin{proof}
To see this, recall first that every prime $\idp \in P_K$ is unramified in $H_K$; therefore $H_K$ in itself forms an object of $\C_K$ together with the Frobenius automorphisms $\sigma_\idp \in Gal(H_K / K)$ ($\idp \in P_K$) as the $\Lambda$-structure $\psi_\idp$; there, $O_{H_K}$ is the maximal integral model. 

With this in mind, take the natural embedding $s: H_K = K(j(1)) \hookrightarrow \bar{K} \subseteq \comp$; and for $\xi = j(1) \in O_{H_K}$, consider $\xi^s \in \bar{K}^{I_K}$ as in Proposition \ref{galois is X_Xi}. Then, as discussed there, we have $\xi^s \in W_{O_K}^a(O_{\bar{K}})$ because $j(1) \in O_{H_K}$. To be more specific, the component $\xi_\ida^s$ of $\xi^s \in \intwitt$ at each $\ida \in I_K$ is given by $s (\psi_\ida \xi)$; but by construction of $\xi = j(1) \in H_K$ and Hasse's reciprocity law (i.e.\ $j(\idb)^{\sigma_\idp} = j(\idp^{-1}\idb)$ for $\idp \in P_K$ and fractional ideals $\idb$ of $K$), this means that $\xi_\ida^s = j(\ida^{-1}) = \iota_\ida$ for any $\ida \in I_K$, hence the claim. 
\end{proof}

While algebraic (or integral) Witt vectors are defined in a purely algebraic way, this proposition indicates us an analytic way to construct a prototypical example of them. In fact we now see that this observation is a special case of a more general relationship between algebraic Witt vectors and modular functions; the following concept of \emph{Witt deformation family of modular functions} is a key to explain this general relationship. In what follows, let us denote by $\modular = \bigcup_N \modular_N$ the field of modular functions of finite level \emph{rational over} $\ratf^{ab}$, where $\modular_N$ is the field of modular functions of level $N$ \emph{rational over} $\ratf(e^{2 \pi i /N})$ in the sense of \S 6.2, pp.137 \cite{Shimura}, i.e.\ $\modular_N = \ratf (j, f_a \mid a \in N^{-1}\integer^2 / \integer^2)$ with $f_a$ \emph{Fricke functions} (cf.\ Example \ref{Fricke functions}). (Recall that the field of all modular functions of finite level is equal to $\comp \otimes_\ratf \modular$; cf.\ Proposition 6.1, \cite{Shimura}.) 

\begin{defn}[Witt deformation family of modular functions]
\label{Witt deformation family}
A \emph{Witt deformation family of modular functions} (or simply, a \emph{Witt deformation of modular functions}) is a continuous function\footnote{To be more precise, $f: M_2(\widehat{\integer}) \times \upper \rightarrow \comp$ is actually a partial function or a function to $\comp \cup \infty$ in general; but in this paper we shall denote $f$ as a function to $\comp$ just for simplicity.} $f: M_2(\widehat{\integer}) \times \upper \rightarrow \comp$ satisfying the following axioms:
\begin{enumerate}
 \item for each $m \in M_2(\widehat{\integer})$, the fiber $f_m := f(m, -): \upper \rightarrow \comp$ is a modular function in $\modular$;
 \item for each $m \in M_2(\widehat{\integer})$ and $u \in GL_2(\widehat{\integer})$, we have:
 \begin{eqnarray}
   f_{m u} &=& f_m^u; 
 \end{eqnarray}
 where $f_m \mapsto f_m^u$ denotes the action of $u \in GL_2(\widehat{\integer})$ onto the modular field $\modular$ (cf.\ \S 6.6 \cite{Shimura}); 
 \item the function $f: M_2(\widehat{\integer}) \times \upper \rightarrow \comp$ factors through the projection $p_N: M_2(\widehat{\integer}) \twoheadrightarrow M_2(\integer / N \integer)$ for some $N$ as follows:
 \begin{equation}
 \xymatrix{
  M_2(\widehat{\integer}) \times \upper \ar[d]_{p_N \times \id_\upper} \ar[r]^f & \comp \\
  M_2(\integer / N \integer)  \times \upper \ar[ur] &
}
 \end{equation}
\end{enumerate}
\end{defn}

\begin{rem}
In relation to the \emph{Bost-Connes system} \cite{Connes_Marcolli_Ramachandran}, the concept of Witt deformation is based on a slight modification of the arithmetic subalgebra constructed in Definition 2.22 \cite{Connes_Marcolli_Ramachandran}. Modulo this modification, our modularity theorem in \S \ref{s4.3} can be seen (almost) as the claim that the arithmetic subalgebra given loc.\ cit.\ (or its commutative subalgebra) coincides with that constructed in \cite{Yalkinoglu}. For this proof to work well, however, we needed to modify the constructions in \cite{Connes_Marcolli_Ramachandran,Connes_Marcolli}; hence our construction is slightly different from there. Despite of this modification, it would be meaningful to have in mind the relationship between our result and the works on Bost-Connes systems \cite{Connes_Marcolli_Ramachandran,Laca_Larsen_Neshveyev,Yalkinoglu} particularly when we try to extend the results developed in this paper to number fields of higher degree; in particular the results of \cite{Laca_Larsen_Neshveyev,Yalkinoglu} and ours in \S \ref{s3} hold for arbitrary number fields. 
\end{rem}

\begin{ex}[$j$-function]
\label{j}
The $j$-function $j$ naturally defines the simplest Witt deformation family of modular functions, denoted by the same symbol $j: M_2(\widehat{\integer}) \times \upper \rightarrow \comp$: for each $(m, \tau) \in M_2(\widehat{\integer}) \times \upper$, 
\begin{eqnarray}
  j(m,\tau) &:=& j(\tau). 
\end{eqnarray}
The fact that this $j: M_2(\widehat{\integer}) \times \upper \rightarrow \comp$ indeed defines a Witt deformation follows readily from the definition of the action of $GL_2(\Ad_{\ratf,f}) = GL_2(\widehat{\integer}) GL_2^+(\ratf)$ on $j$, in particular, $j^u = j$ for $u \in GL_2(\widehat{\integer})$ (cf.\ \S 6.6, \cite{Shimura}). 
\end{ex}

\begin{ex}[Fricke functions]
\label{Fricke functions}
The \emph{Fricke functions} are modular functions $f_a: \upper \rightarrow \comp$ indexed by row vectors $a \in \ratf^2 / \integer^2 \setminus \{0\}$, and defined as follows (cf.\ pp.133 \cite{Shimura}): for $w_1, w_2$ with $w_1/w_2 \in \upper$, 
\begin{eqnarray}
  f_a (w_1/w_2) &=& \frac{g_2(w_1, w_2) g_3(w_1,w_2)}{\Delta(w_1, w_2)} \wp ( a_1 w_1 + a_2 w_2 ; w_1, w_2).
\end{eqnarray}
Each Fricke function $f_a$ naturally defines a Witt deformation family $\chi_a$ of modular functions by: (here we shall define $f_0 := j$.)
\begin{eqnarray}
 \chi_a: M_2(\widehat{\integer}) \times \upper &\rightarrow& \comp \\
 (m, \tau) &\longmapsto& f_{am} (\tau) 
\end{eqnarray}
(In some sense this $\chi_a$ ``deforms'' the Fricke functions $f_a$ by the action of $m \in M_2(\widehat{\integer})$ on the index $a \in \ratf^2 / \integer^2$.) The fact that this $\chi_a: M_2(\widehat{\integer}) \times \upper \rightarrow \comp$ indeed defines a Witt deformation of modular functions can be proved, again, using the definition of the action of $GL_2(\Ad_{\ratf,f}) = GL_2(\widehat{\integer}) GL_2^+(\ratf)$ on $f_a$'s: That is, for $u \alpha \in GL_2(\widehat{\integer}) GL_2^+(\ratf) = GL_2(\Ad_{\ratf,f})$, one has $f_a^{u\alpha} = f_{au} \circ \alpha$ (cf.\ \S 6.6, \cite{Shimura}). 
\end{ex}

We demonstrate in Theorem \ref{explicit modularity theorem} that these functions (Example \ref{j}, \ref{Fricke functions}) are typical and provide enough generators of \emph{modular vectors} (cf.\ Definition \ref{modular vector}), but we have yet another class of examples as follows, which is useful in our constructions (in Lemma \ref{rho}, \ref{enough}) of modular vectors:

\begin{ex}[characteristic functions]
\label{characteristic functions}
Let $N \geq 1$ be a positive integer and $S \subseteq M_2(\integer / N \integer)$ be a subset that is closed under the right action of $GL_2(\integer/ N \integer)$. Denoting by $\chi_S: M_2(\integer/ N \integer) \rightarrow \comp$ the characteristic function of $S$, let us define the function $f^{(S)}: M_2(\widehat{\integer}) \times \upper \rightarrow \comp$ as the composition:
\begin{equation}
 f^{(S)}: M_2(\widehat{\integer}) \times \upper \xrightarrow{\pi_1} M_2(\widehat{\integer}) \twoheadrightarrow M_2(\integer/ N \integer) \xrightarrow{\chi_S} \comp
\end{equation}
where $\pi_1$ denotes the projection onto the first component. Then this $f^{(S)}$ gives a Witt deformation, whose fiber $f_m: \upper \rightarrow \comp$ at each $m \in M_2(\widehat{\integer})$ is either $0$ or $1$, the constant functions. 
\end{ex}

\begin{defn}[modular vector]
\label{modular vector}
Let $f: M_2(\widehat{\integer}) \times \upper \rightarrow \comp$ be a Witt deformation of modular functions such that the fiber $f_m: \upper \rightarrow \comp$ at each $m \in M_2(\widehat{\integer})$ is defined as a finite value at $\tau_K \in \upper$ with $O_K = \integer \tau_K + \integer$. Then the \emph{modular vector} $\widehat{f} \in \comp^{I_K}$ \emph{associated to $f$ (at $K$)} is the function $\widehat{f}: I_K \rightarrow \comp$ defined by the following composition: 
\begin{equation}
\label{composition0}
\widehat{f}: I_K \hookrightarrow DR_K \xrightarrow{\simeq} \widehat{O}_K \times_{\widehat{O}_K^*} (\Ad_{K,f}^*/K^*) \xrightarrow{\simeq} Lat^1_K \hookrightarrow Lat^2_\ratf \xrightarrow{\simeq} \Gamma \backslash (M_2(\widehat{\integer}) \times \upper) \xrightarrow{f} \comp. 
\end{equation}
Note that $f$ induces a function on $\Gamma \backslash (M_2(\widehat{\integer}) \times \upper)$ thanks to the second condition of Witt deformation, and $\widehat{f}$ in fact takes its values in $K^{ab}$, which follows from Lemma \ref{component of modular vector} below. Modular vectors in this sense themselves constitute a $\ratf$-algebra; to make it a $K$-algebra, we define \emph{the $K$-algebra $M_K$ of modular vectors} as follows (and abusively, we shall call the elements of $M_K$ as well \emph{modular vectors}):
\begin{eqnarray}
\label{M_K}
  M_K &:=& K \otimes_\ratf \bigl \{ \widehat{f} \in (K^{ab})^{I_K} \mid \textrm{$f: M_2(\widehat{\integer}) \times \upper \rightarrow \comp$ is a Witt deformation.}  \bigr\}.
\end{eqnarray}
\end{defn}

\begin{rem}
Instead, we may allow the fibers $f_m$ of Witt deformations $f: M_2(\widehat{\integer}) \times \upper \rightarrow \comp$ to be modular functions in $K \otimes_\ratf \modular$; then the modular vectors $\widehat{f}$ constitute a $K$-algebra in themselves, which is equal to (\ref{M_K}). We may use the term ``Witt deformations'' in this extended sense too.
\end{rem}

Concerning modular vectors, we first see more explicitly the coefficients $\widehat{f}_\ida$ of modular vectors $\widehat{f} = (\widehat{f}_\ida)$ in terms of the values of the Witt deformation $f: M_2(\widehat{\integer}) \times \upper \rightarrow \comp$ of modular functions: 

\begin{lem}
\label{component of modular vector}
Let $f: M_2(\widehat{\integer}) \times \upper \rightarrow \comp$ be a Witt deformation of modular functions. Then, for each $\ida \in I_K$, the component $\widehat{f}_\ida \in \comp$ at $\ida$ of the associated modular vector $\widehat{f} = (\widehat{f}_\ida) \in \comp^{I_K}$ is in fact in $K^{ab}$ and given by the following equation:
\begin{eqnarray}
 \widehat{f}_\ida  &=& f(m_\ida, \tau_{\ida^{-1}});
\end{eqnarray}
where, on one hand, if we denote as $\ida^{-1} = \integer w_1 + \integer w_2$ with $w_1 / w_2 \in \upper$, we define $\tau_{\ida^{-1}} := w_1/ w_2$; on the other hand, $m_\ida \in M_2(\widehat{\integer})$ is such that the following diagram commutes with some $\alpha \in GL_2^+(\realf)$ (i.e.\ such that $\integer^2 \cdot \alpha \cdot (i, 1)^t = \ida^{-1}$; cf.\ \S \ref{s2.2}):
\begin{equation}
 \xymatrix{
  \ratf^2/\integer^2 \ar[d]_{q_\tau} \ar[r]^{m_\ida} & \ratf^2/\integer^2 \ar[r]^{\alpha \cdot (i,1)^t} & \ratf \ida^{-1} / \ida^{-1} \ar[d]^{=} \\
   K/ O_K \ar[rr] && K \ida^{-1}/\ida^{-1}
  }
\end{equation}
where $K/O_K \rightarrow K\ida^{-1} / \ida^{-1}$ is a natural inclusion. 
\end{lem}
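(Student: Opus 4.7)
The plan is to trace the image of $\ida \in I_K$ through each arrow in the composition (\ref{composition0}) and then check that the final evaluation is $f(m_\ida, \tau_{\ida^{-1}})$. Since $f$ factors through $\Gamma \backslash (M_2(\widehat{\integer}) \times \upper)$ by the second axiom of Witt deformation, it suffices to exhibit \emph{some} representative $(m, \alpha) \in M_2(\widehat{\integer}) \times GL_2^+(\realf)$ of the image of $\ida$ in $Lat_\ratf^2$ and verify that $(m,\alpha) \sim_\Gamma (m_\ida,\alpha_\ida)$ with $\alpha_\ida(i) = \tau_{\ida^{-1}}$.

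First I would follow $\ida$ through $I_K \hookrightarrow DR_K \simeq \widehat{O}_K \times_{\widehat{O}_K^*}(\Ad_{K,f}^*/K^*)$: the class $[\ida] \in DR_K$ corresponds to some $[\rho, t] = [1, s_\ida]$ where $s_\ida \in \Ad_{K,f}^*/K^*$ is an idelic representative of $\ida$, so that under the recipe in Proposition~2.6 of \cite{Connes_Marcolli_Ramachandran} recalled in \S\ref{s2.2} the associated $K$-lattice (up to $\comp^*$-scaling) is $(\ida^{-1}, \phi)$, where $\phi: K/O_K \to K\ida^{-1}/\ida^{-1}$ is the natural inclusion induced by $\ida^{-1} \supseteq O_K$. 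Next, viewing $(\ida^{-1}, \phi)$ as a $2$-dimensional $\ratf$-lattice via the identification $K \simeq \ratf^2$ from the basis $(\tau_K,1)$, I would pick any $\alpha_\ida \in GL_2^+(\realf)$ with $\integer^2 \cdot \alpha_\ida \cdot (i,1)^t = \ida^{-1}$; writing $\alpha_\ida = \bigl(\begin{smallmatrix}a&b\\c&d\end{smallmatrix}\bigr)$, this gives the basis $(ai+b,\,ci+d)$ of $\ida^{-1}$, hence $\alpha_\ida(i) = (ai+b)/(ci+d) = \tau_{\ida^{-1}}$.

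It remains to recognize $m_\ida$ as the matrix part of the representative. In the $\ratf$-lattice description, the level structure must be the map $\ratf^2/\integer^2 \ni a \mapsto a \cdot m \cdot \alpha_\ida \cdot (i,1)^t \in \ratf\ida^{-1}/\ida^{-1}$; the element $m \in M_2(\widehat{\integer})$ that realises the prescribed $\phi$ is, by construction, exactly the unique $m_\ida$ making the diagram in the statement commute. Hence the $\Gamma$-orbit of $(m_\ida, \alpha_\ida)$ is the image of $\ida$ in $\Gamma \backslash (M_2(\widehat{\integer}) \times \upper)$; evaluating $f$ and using that $f_{m_\ida}$ depends only on $\alpha_\ida(i) = \tau_{\ida^{-1}}$ (because $f_{m_\ida}: \upper \to \comp$) yields $\widehat{f}_\ida = f(m_\ida, \tau_{\ida^{-1}})$. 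For the claim $\widehat{f}_\ida \in K^{ab}$, I would invoke Shimura's reciprocity (\cite{Shimura}, Ch.~6): $\tau_{\ida^{-1}} \in K \cap \upper$ is a CM point, $f_{m_\ida} \in \modular$ is a modular function of finite level rational over $\ratf^{ab}$, and special values of such functions at CM points lie in $K^{ab}$.

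The main obstacle, which is really a bookkeeping challenge rather than a conceptual one, is ensuring that the normalizations line up across all five identifications: the convention $\Lambda_t = t_\infty^{-1}(t_f \widehat{O}_K \cap K)$, the $\ida$-vs-$\ida^{-1}$ direction forced by the class field theory isomorphism $\Ad_{K,f}^*/K^* \simeq G_K^{ab}$ used through Yalkinoglu's isomorphism $DR_K \simeq \widehat{O}_K \times_{\widehat{O}_K^*} G_K^{ab}$, the choice of basis $(\tau_K,1)$ fixing $K \simeq \ratf^2$, and the Connes--Marcolli convention $(m,\alpha) \mapsto (\Lambda_\alpha, m\alpha)$ together with the $\Gamma$-action $\gamma \cdot (m,\alpha) = (m\gamma^{-1}, \gamma\alpha)$. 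Each of these admits an opposite sign convention in the literature, and the stated formula $\widehat{f}_\ida = f(m_\ida, \tau_{\ida^{-1}})$ determines a consistent choice that must be checked once and then used uniformly in \S\ref{s4.3}--\S\ref{s4.4}.
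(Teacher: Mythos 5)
Your overall strategy mirrors the paper's: push $\ida$ through the chain of identifications in (\ref{composition0}), identify the resulting pair $(m_\ida,\tau_{\ida^{-1}})$, and evaluate $f$. But there is a genuine error at the first step. You assert that the image of $\ida \in I_K$ in $\widehat{O}_K \times_{\widehat{O}_K^*}(\Ad_{K,f}^*/K^*)$ is of the form $[1, s_\ida]$. This cannot be right for $\ida \neq O_K$: elements with first coordinate a unit of $\widehat{O}_K$ are precisely the units of $DR_K$ (they form $DR_K^\times \simeq G_K^{ab}$), whereas the image of $\ida$ in $DR_K$ is a unit only when $\ida = O_K$ (the class $[\ida]_\idf$ is already a non-unit in $DR_\idf$ as soon as $\ida$ and $\idf$ share a prime). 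The correct image, via Yalkinoglu's Propositions 4.2 and 8.2, is $[\rho,\rho^{-1}]$ where $\ida = \rho\widehat{O}_K\cap K$; the non-unit first coordinate $\rho$ is exactly what records that $\ida$ fails to be invertible in $DR_K$.

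This is not a harmless notational slip, because the first coordinate twists the level structure. Under the Connes--Marcolli--Ramachandran recipe recalled in \S\ref{s2.2}, a pair $[\rho,t]$ gives the level structure $\phi_{(\rho,t)} = (\text{mult.\ by } t_f)\circ(\text{mult.\ by }\rho)$. With the correct $[\rho,\rho^{-1}]$ the two multiplications cancel and $\phi_\ida$ is the natural inclusion $K/O_K \hookrightarrow K\ida^{-1}/\ida^{-1}$, which is what makes the diagram in the statement commute. With your $[1,s_\ida]$ the level structure would instead be multiplication by $s_\ida$, and the resulting $m_\ida$ would be wrong (it would differ by a $q_\tau(s_\ida)$ factor). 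You then state the correct conclusion --- natural inclusion, $(\ida^{-1},\phi)$ --- so your writeup is internally inconsistent: the answer does not follow from the $[1,s_\ida]$ parametrization you set up. Fixing the image to $[\rho,\rho^{-1}]$ would repair the argument and make the rest of your chase, including $\alpha_\ida(i)=\tau_{\ida^{-1}}$, the characterization of $m_\ida$ via commutativity, and the Shimura/CM-point justification of $\widehat{f}_\ida\in K^{ab}$, go through and match the paper's.
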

\begin{proof}
The proof is just based on a careful chase of the composition (\ref{composition0}). Firstly, the composition $I_K \hookrightarrow DR_K \simeq \widehat{O}_K \times_{\widehat{O}_K^*} (\Ad_{K,f}^* / K^*)$ sends the ideal $\ida \in I_K$ (denoting $\ida = \rho \widehat{O}_K \cap K$ for some $\rho \in \widehat{O}_K$) to $[\rho, \rho^{-1}] \in \widehat{O}_K \times_{\widehat{O}_K^*} (\Ad_{K,f}^* / K^*)$ (cf.\ Proposition 4.2 and Proposition 8.2, \cite{Yalkinoglu}); secondly, one sees that the further composition with the maps $\widehat{O}_K \times_{\widehat{O}_K^*} (\Ad_{K,f}^* / K^*) \rightarrow Lat^1_K \rightarrow Lat^2_\ratf$ sends $[\rho, \rho^{-1}]$ to $[\ida^{-1}, \phi_\ida] \in Lat^2_\ratf$, where $\phi_\ida: \ratf^2 / \integer^2 \rightarrow \ratf \ida^{-1} / \ida^{-1}$ is given by the following composition (cf.\ \S \ref{s2.2}): 
\begin{equation}
 \xymatrix{
 \phi_\ida: \ratf^2/\integer^2 \ar[r]^{q_\tau} & K/O_K \ar[r] & K \ida^{-1}/\ida^{-1} \ar[r]^{=}& \ratf \ida^{-1} / \ida^{-1};
}
\end{equation}
where the second arrow $K/O_K \rightarrow K \ida^{-1}/ \ida^{-1}$ is a natural inclusion. Therefore, by the construction of the isomorphism $Lat^2_\ratf \simeq \Gamma \backslash (M_2(\widehat{\integer}) \times \upper)$ (cf.\ \S \ref{s2.2}), this $[\ida^{-1}, \phi_\ida] \in Lat^2_\ratf$ indeed corresponds to $[m_\ida, \tau_{\ida^{-1}}] \in  \Gamma \backslash (M_2(\widehat{\integer}) \times \upper)$ for the $m_\ida \in M_2(\widehat{\integer})$ and $\tau_{\ida^{-1}} \in \upper$ described above. This completes the proof. 
\end{proof}

\begin{ex}[the modular vector for $j$]
Let $j: M_2(\widehat{\integer}) \times \upper \rightarrow \comp$ be the Witt deformation of modular functions given in Example \ref{j}. Then Lemma \ref{component of modular vector} shows that the associated modular vector $\widehat{j}$ is given by $\widehat{j}_\ida = j(\ida^{-1})$. Therefore, this $\widehat{j}$ is equal to $\iota$ constructed in Proposition \ref{witt vector from j function}, and as observed there, this vector $\widehat{j} = \iota$ is a member of $\intwitt$, hence, of the $K$-algebra $E_K$ in particular. This example shows that \emph{some} modular vector (i.e.\ $\widehat{j} \in M_K$) indeed defines an algebraic Witt vector (i.e.\ $\widehat{j} = \iota \in E_K$). 
\end{ex}

The goal of the next subsection is to show that this example is a special case of a more general relationship between modular vectors and algebraic Witt vectors: we prove that these two concepts actually coincide. 

\subsection{The modularity theorem}
\label{s4.3}
This section proves that the $K$-algebra $M_K$ of modular vectors coincides with the $K$-algebra $E_K$ of algebraic Witt vectors, which we call the \emph{modularity theorem}. In other words, this theorem claims that every modular vector $\widehat{f}$ defines an algebraic Witt vector, i.e.\ $\widehat{f} \in E_K$, and conversely, every algebraic Witt vector $\xi \in E_K$ is realized as a modular vector, i.e.\ $\xi = \widehat{f}$ for some Witt deformation $f: M_2(\widehat{\integer}) \times \upper \rightarrow \comp$ of modular functions. To be precise:

\begin{thm}[modularity theorem]
\label{modularity theorem}
We have the following identity as $K$-subalgebras of $(K^{ab})^{I_K}$:
\begin{eqnarray}
 E_K &=& M_K.
\end{eqnarray}
\end{thm}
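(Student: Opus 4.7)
The plan is to invoke the characterization $E_K \simeq \Hom_{G_K}(DR_K, \bar{K})$ from Corollary \ref{characterization of E_K}, which identifies $E_K$ with the $K$-algebra of locally constant $G_K$-equivariant $K^{ab}$-valued functions on $DR_K$, and then verify both inclusions $M_K \subseteq E_K$ and $E_K \subseteq M_K$ inside the common ambient algebra $(K^{ab})^{I_K}$.

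\paragraph{Inclusion $M_K \subseteq E_K$.}
For a Witt deformation $f$, Lemma \ref{component of modular vector} supplies the explicit formula $\widehat{f}_\ida = f_{m_\ida}(\tau_{\ida^{-1}})$, namely the value at a CM point of a modular function of finite level rational over $\ratf^{ab}$. I would verify three properties in turn. First, axiom 3 of Definition \ref{Witt deformation family} forces $\widehat{f}$ to factor through a finite quotient $DR_\idf$ of $DR_K$, so it extends continuously from $I_K$ to $DR_K$ and is locally constant. Second, by the main theorem of complex multiplication (Shimura, Chapter 6), such special values lie in $K^{ab}$. Third, for $G_K$-equivariance I would trace the $DR_K^\times \simeq G_K^{ab}$-action through the chain of identifications in \S \ref{s2.2}: the abstract Galois action pulls back to right multiplication by $u \in GL_2(\widehat{\integer})$ on the first factor of $M_2(\widehat{\integer}) \times \upper$, whereupon axiom 2 of Definition \ref{Witt deformation family} ($f_{mu} = f_m^u$) combined with Shimura's reciprocity law $f^u(\tau_K) = f(\tau_K)^{[u]}$ reconciles the two sides.

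\paragraph{Inclusion $E_K \subseteq M_K$.}
Conversely, take $\phi \in E_K$, viewed via Corollary \ref{characterization of E_K} as a locally constant $G_K$-equivariant map $\phi: DR_K \to K^{ab}$; $\phi$ factors through some finite quotient $DR_\idf$. I would realize $\phi$ as a $K$-linear combination of modular vectors in two layers. On the one hand, decompose $\phi$ along the finitely many $G_K$-orbits in $DR_\idf$; each orbit corresponds (via $DR_K \simeq \Gamma \backslash (M_2(\widehat{\integer}) \times \upper)$ at the CM point $\tau_K$) to a $GL_2(\integer/N\integer)$-stable subset $S \subseteq M_2(\integer/N\integer)$ for suitable $N$, so that the characteristic-function deformations $f^{(S)}$ of Example \ref{characteristic functions} realize the indicator functions of these orbits as modular vectors. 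On the other hand, by the classical theorem of Shimura on Fricke functions, the special values $j(\tau_K)$ and $f_a(\tau_K)$ (for $a \in \ratf^2/\integer^2$) generate the ray class fields $K_\idf$; hence the modular vectors $\widehat{j}$ and $\widehat{\chi}_a$ coming from Examples \ref{j} and \ref{Fricke functions} suffice to reproduce any prescribed $K^{ab}$-valued assignment on a single orbit. Multiplying indicator vectors by value vectors inside the $K$-algebra $M_K$ then yields $\phi$ as a finite $K$-linear combination of modular vectors, which proves the second inclusion.

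\paragraph{Main obstacle.}
The technically delicate point is the $G_K$-equivariance check in the first direction. The $G_K$-action on $DR_K$ passes through the abelianization $G_K \twoheadrightarrow G_K^{ab} \simeq DR_K^\times$, and one must verify that under the chain $DR_K \simeq Lat_K^1 \hookrightarrow Lat_\ratf^2 \simeq \Gamma \backslash (M_2(\widehat{\integer}) \times \upper)$ this action corresponds, after the lattice-convention adjustment of \S \ref{s2.2}, precisely to the right action $(m,\tau) \mapsto (mu, \tau)$ of $u \in GL_2(\widehat{\integer})$, so that Shimura reciprocity closes the required diagram. Once this compatibility is nailed down, the second direction reduces to an effective Stone--Weierstrass-style argument using the two explicit families (characteristic and Fricke) of Witt deformations, and the identity $E_K = M_K$ follows.
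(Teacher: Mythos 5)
Your first inclusion $M_K \subseteq E_K$ takes essentially the paper's route: realize $E_K$ via Corollary~\ref{characterization of E_K} as the locally constant $K^{ab}$-valued $G_K^{ab}$-equivariant functions on $DR_K$ and check the three properties for $\widehat{f}$, the delicate one being the equivariance identity $(f_m(\tau))^{[s]} = f_{m_s}(\tau_s)$ (the paper's Lemma~\ref{G_K equivariance}), which you correctly isolate as a Shimura-reciprocity computation carried through the lattice identifications of \S~\ref{s2.2}.

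Where you part ways with the paper is the reverse inclusion. The paper does not try to manufacture a Witt deformation for an arbitrary $\phi \in E_K$; it instead appeals to Neshveyev's characterization (Theorem 10.1 of Yalkinoglu): any $K$-subalgebra of $E_K$ that separates the points of $DR_K$ and contains the idempotents $\rho^\ida$ must equal $E_K$. Those two conditions for $M_K$ are Lemmas~\ref{rho} and~\ref{enough}, and separation is a strictly weaker demand than the generation you propose --- it suffices to exhibit, for each pair of points, one Fricke deformation distinguishing them, whereas your orbit-by-orbit decomposition needs the $\widehat{\chi}_a$ to span all equivariant $K^{ab}$-valued functions on every orbit.

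That generation claim is the gap. You argue that because $j(\tau_K)$ and $f_a(\tau_K)$ generate $K_\idf$, the vectors $\widehat{j}, \widehat{\chi}_a$ reproduce any equivariant assignment on a single orbit. But on an orbit $C_{\idf/\idd}$ with $\idd \neq O_K$, the base-point value of $\widehat{\chi}_a$ is not $f_a(\tau_K)$ but a twisted value $\bigl(f_{a q_{\tau_K}(\rho_\idd)}(\tau_K)\bigr)^{[s_\idd]}$, and one must still verify that, as $a$ ranges over $N^{-1}\integer^2/\integer^2$, these twisted values generate $K_{NO_K/\idd}$. That is precisely the content of the paper's Lemmas~\ref{equal gcds}--\ref{last lemma} in \S~\ref{s4.4}, which establish Theorem~\ref{explicit modularity theorem} and are proved there \emph{after} Theorem~\ref{modularity theorem}. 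Your approach is therefore viable in outline but amounts to proving the harder explicit-generation result first; the paper's Neshveyev route shortcuts this by needing only point separation, which is cheaper to verify.
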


Our proof of this theorem consists of two steps. We first prove the inclusion $M_K \subseteq E_K$, which is based on Corollary \ref{characterization of E_K} and Shimura's reciprocity law. After that, we  prove the converse inclusion $E_K \subseteq M_K$, which is based on the characterization of $E_K$ by simple conditions due to Neshveyev, Theorem 10.1, \S 10 \cite{Yalkinoglu}. 

To be specific, recall that Corollary \ref{characterization of E_K} proved that the $K$-algebra $E_K$ is identified with that of locally constant $K^{ab}$-valued $G_K^{ab}$-equivariant functions on $DR_K$, i.e.\ $E_K = \Hom_{G_K^{ab}}(DR_K, K^{ab})$; in the proof of the inclusion $M_K \subseteq E_K$, we see that every modular vector $\widehat{f}$ defines a locally constant ($K^{ab}$-valued) $G_K^{ab}$-equivariant function on $DR_K$, by which we deduce the inclusion $M_K \subseteq E_K$. On the other hand, the $K$-algebra $E_K$ was characterized by Neshveyev, \S 10 \cite{Yalkinoglu} as the $K$-subalgebra $E$ of $C(DR_K)$--- the $\comp$-algebra of continuous functions on $DR_K$--- that satisfies the following conditions (cf.\ pp.408, \cite{Yalkinoglu}; note that $DR_K$ is homeomorphic to $Y_K$ in \cite{Yalkinoglu}): 
\begin{enumerate}
 \item every function in $E$ is locally constant on $DR_K$;
 \item $E$ separates the points of $DR_K$; 
 \item $E$ contains the idempotents $\rho^\ida$ for each $\ida \in I_K$ (cf.\ Lemma \ref{rho});
 \item every function in $E$ is $K^{ab}$-valued and $G_K^{ab}$-equivariant. 
\end{enumerate}
In other words, if $E$ is a $K$-subalgebra of $E_K$ and satisfies the second and the third conditions here, one actually has the identity $E = E_K$. Therefore, after the proof of the inclusion $M_K \subseteq E_K$, the remained task for the proof of the identity $E_K = M_K$ is only to prove that $M_K$ satisfies the second and the third conditions here. Our proof of the target identity $E_K = M_K$ will follow this line of argument. 

Now let us start proving the inclusion $M_K \subseteq E_K$. To this end, we prepare the following lemma, which is used to prove that modular vectors $\widehat{f}$ define $K^{ab}$-valued $G_K^{ab}$-equivariant functions on $DR_K$. In what follows, recall that every fractional ideal $\Lambda$ of $K$ can be given as $\Lambda = t \widehat{O}_K \cap K$ for some $t \in \Ad_{K,f}$; then, for such $\Lambda$ and each $s \in \Ad_{K,f}$, let us denote $s^{-1} \Lambda := s^{-1}t \widehat{O}_K \cap K$. 

\begin{lem}
\label{G_K equivariance}
Let $(\Lambda,\phi) \in Lat^1_K$ be a $K$-lattice such that $\Lambda$ is a fractional ideal of $K$ and corresponds to $[m, \tau] \in \Gamma \backslash (M_2(\widehat{\integer}) \times \upper)$; also let $s \in \Ad_{K,f}^* / K^*$ and $(s^{-1} \Lambda, s^{-1} \phi) \in Lat^1_K$ corresponds to $[m_s, \tau_s]$. Moreover, let $f: M_2(\widehat{\integer}) \times \upper \rightarrow \comp$ be a Witt deformation of modular functions. Then $f_m (\tau) \in K^{ab}$, and the following identity holds:
\begin{eqnarray}
 \bigl (f_m (\tau) \bigr)^{[s]} &=& f_{m_s} (\tau_s). 
\end{eqnarray}
\end{lem}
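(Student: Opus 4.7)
The plan is to derive both assertions from Shimura's reciprocity law for special values of modular functions of finite level at CM points. By axiom (3) of a Witt deformation there is some $N$ with every fiber $f_m$ lying in $\modular_N \subseteq \modular$; hence $f_m(\tau)$ is a special value of a modular function of finite level at the CM point $\tau \in \upper \cap K$ (automatic from the fact that $[m,\tau]$ comes from the $K$-lattice $(\Lambda,\phi)$ with $\Lambda$ a fractional ideal), so $f_m(\tau) \in K^{ab}$ by Shimura.

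For the main identity, the first step is to make the two descriptions of the twist by $s$ explicit. Recall from Section \ref{s2.2} that under the composite $\widehat{O}_K \times_{\widehat{O}_K^*}(\Ad_{K,f}^*/K^*) \xrightarrow{\simeq} Lat^1_K \hookrightarrow Lat^2_\ratf \xrightarrow{\simeq} \Gamma \backslash (M_2(\widehat{\integer}) \times \upper)$, a class $[\rho, t]$ goes to $(\Lambda_t, \phi_{(\rho,t)})$ and then to some $[m,\tau]$. Twisting by $s \in \Ad_{K,f}^*/K^*$ sends $[\rho, t]$ to $[\rho, s^{-1}t]$, which on the $\ratf$-lattice side amounts to right multiplication of the basis by $q_\tau(s)^{-1}$, where $q_\tau : \Ad_{K,f}^* \hookrightarrow GL_2(\Ad_{\ratf,f})$ is the embedding induced by the $\ratf$-basis $(\tau, 1)^t$ of $K$. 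Decomposing $q_\tau(s)^{-1} = u \alpha$ with $u \in GL_2(\widehat{\integer})$ and $\alpha \in GL_2^+(\ratf)$ via $GL_2(\Ad_{\ratf,f}) = GL_2(\widehat{\integer}) \cdot GL_2^+(\ratf)$, one finds $[m_s, \tau_s] = [m u, \alpha^{-1}\tau]$ in $\Gamma \backslash (M_2(\widehat{\integer}) \times \upper)$.

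On the modular side, Shimura's reciprocity law (Theorem 6.31 of \cite{Shimura}) asserts $h(\tau)^{[s]} = h^{q_\tau(s)^{-1}}(\tau)$ for $h \in \modular$ regular at $\tau$. Applied to $h = f_m$, the $GL_2(\widehat{\integer})$-action combined with axiom (2) of Witt deformations rewrites $f_m^{u}$ as $f_{mu}$, while the $GL_2^+(\ratf)$-factor acts on the variable by $\tau \mapsto \alpha^{-1}\tau$. Chaining these, $f_m(\tau)^{[s]} = f_{mu}(\alpha^{-1}\tau) = f_{m_s}(\tau_s)$, which is the asserted identity.

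The main obstacle is the bookkeeping in the middle step: one must verify that the pair $[m_s, \tau_s]$ produced by applying the sequence of isomorphisms of Section \ref{s2.2} to $(s^{-1}\Lambda, s^{-1}\phi)$ really agrees with the pair produced from the decomposition $q_\tau(s)^{-1} = u\alpha$, modulo $\Gamma$. This is a matrix-level check, sensitive to left/right action conventions and to the normalization $\Lambda_\alpha = \integer^2 \alpha (i,1)^t$ fixed in Section \ref{s2.2}; once this compatibility is in place, the lemma reduces to a direct invocation of Shimura's reciprocity together with axiom (2) of Witt deformations.
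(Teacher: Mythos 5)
Your strategy is exactly the paper's: apply Shimura's reciprocity to $q(s)^{-1}$, decompose it as $u\alpha \in GL_2(\widehat{\integer})\cdot GL_2^+(\ratf)$, dispose of the $GL_2(\widehat{\integer})$-factor with axiom (2), and absorb the $GL_2^+(\ratf)$-factor into a change of variable. Your $q_\tau$ is the paper's $q_\Lambda$, since scaling the basis $(w_1, w_2)$ of $\Lambda$ by $w_2^{-1}$ yields $(\tau,1)$ with the same matrices; and the claim $f_m(\tau)\in K^{ab}$ does follow from Shimura as you say, using axiom (1) to place $f_m$ in $\modular$. So this is not a genuinely different route.

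The issue is that the ``bookkeeping'' step you defer is not a formality; it is the entire content of the paper's proof, and your tentative description of its outcome is off by a sign relative to the paper's conventions. Example \ref{Fricke functions} fixes $f^{u\alpha} = f^u\circ\alpha$, so Shimura gives $\bigl(f_m(\tau)\bigr)^{[s]} = f_{mu}(\alpha\tau)$, not $f_{mu}(\alpha^{-1}\tau)$ as you write; and the lattice chase gives $\tau_s = \alpha\tau$, not $\alpha^{-1}\tau$ (in the paper's variables $\alpha_s = \beta^{-1}\alpha_\Lambda$ and $\tau_s = \alpha_s(i) = \beta^{-1}\tau$, your $\alpha$ being the paper's $\beta^{-1}$). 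Your two signs are wrong in the same way and cancel, so your final identity is right, but neither intermediate claim is, and you would not have caught such compensating errors without actually doing the verification. What is missing is precisely: (i) that in the decomposition $q_\tau(s)^{-1} = u\alpha$ the $GL_2^+(\ratf)$-factor $\alpha$ \emph{is} the change-of-basis matrix relating $\Lambda$ and $s^{-1}\Lambda$ (the paper proves this by computing $\Lambda_p s_p^{-1}$ prime by prime to show $q_\Lambda(s)^{-1}\beta \in GL_2(\widehat{\integer})$); and (ii) that the chain $\widehat{O}_K \times_{\widehat{O}_K^*}(\Ad_{K,f}^*/K^*) \to Lat_K^1 \to Lat_\ratf^2 \to \Gamma\backslash(M_2(\widehat{\integer})\times\upper)$ really sends $(s^{-1}\Lambda, s^{-1}\phi)$ to $[mu, \alpha\tau]$, which requires tracking how $s^{-1}$ acts on the level structure $\phi$ (the paper's commutative-diagram argument showing $m_s\alpha_s = m\,q_\Lambda(s)^{-1}\alpha_\Lambda$). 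Until (i) and (ii) are carried out, the lemma remains an assertion rather than a proof.
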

\begin{proof}
Throughout this proof, let us denote $\Lambda_s := s^{-1} \Lambda$ and $\phi_s := s^{-1} \phi$ for short. Moreover let $\Lambda = \integer w_1 + \integer w_2 = \integer^2 \cdot \alpha \cdot (i, 1)^t$ and $\Lambda_s = \integer w_1' + \integer w_2' = \integer^2 \cdot \alpha_s \cdot (i, 1)^t$ with $\tau = w_1 / w_2, \tau_s = w_1'/ w_2' \in \upper$ and $\alpha, \alpha_s \in GL_2^+(\realf)$; also let $\beta \in GL_2^+ (\ratf)$ be such that $(w_1, w_2)^t = \beta (w_1', w_2')^t$, whence $\alpha_s = \beta^{-1} \alpha$ and $\tau= \alpha (i), \tau_s = \alpha_s (i)$.  
Define the embedding $q_\Lambda: \Ad_{K,f} \rightarrow M_2(\Ad_{\ratf,f})$ with respect to the basis of $\Lambda$ in the sense of \S 4.4 \cite{Shimura} that $q_\Lambda (\mu) (w_1, w_2)^t = (\mu w_1, \mu w_2)^t$ for $\mu \in K$. 
Then, for each prime $p$, we have:
\begin{eqnarray}
 \integer_p^2 \left (\begin{array}{c} w_1' \\ w_2' \end{array} \right) &=& (\Lambda_s)_p \\
 &=& \Lambda_p s_p^{-1} \\
 &=&  \integer_p^2 \left (\begin{array}{c} w_1 s_p^{-1} \\ w_2 s_p^{-1} \end{array} \right) \\
 &=&  \integer_p^2 q_\Lambda (s_p)^{-1} \left (\begin{array}{c} w_1 \\ w_2 \end{array} \right) \\
 &=&  \integer_p^2 q_\Lambda (s_p)^{-1} \beta \left (\begin{array}{c} w_1' \\ w_2' \end{array} \right). 
\end{eqnarray}
Since this holds for each prime $p$, we have some $u \in GL_2(\widehat{\integer})$ such that $q_\Lambda (s)^{-1} \beta = u$; therefore, $q_\Lambda (s)^{-1} = u \beta^{-1}$. With this in mind, we first compute $(f_m (\tau))^{[s]}$. By Shimura's reciprocity law and $q_\Lambda(s)^{-1} = u\beta^{-1}$, we have:
\begin{eqnarray}
  (f_m(\tau))^{[s]}  &=& f_{m}^{q_\Lambda(s)^{-1}} (\tau) \\
  &=& f_{mu} \circ \beta^{-1} (\tau). 
\end{eqnarray}
To prove that the last one is equal to $f_{m_s}(\tau_s)$, consider the following composition:
\begin{equation}
\label{composition}
\xymatrix{
\ratf^2/\integer^2 \ar[r]^m & \ratf^2/\integer^2 \ar[r]^{\alpha \cdot (i,1)^t} & \ratf \Lambda / \Lambda \ar[r]^{s^{-1}} & \ratf \Lambda_s / \Lambda_s
};
\end{equation}
here the last arrow is defined by the following; suppose $\Lambda = t \widehat{O}_K \cap K$ for $t \in \Ad_{K,f}$. Then:
\begin{equation}
\xymatrix{
\ratf \Lambda / \Lambda \ar[d]_{=} \ar[r]^{s^{-1}} & \ratf \Lambda_s / \Lambda_s \ar[d]^{=} \\
K \Lambda/ \Lambda \ar[d]_{\simeq} & K \Lambda_s / \Lambda_s \ar[d]^{\simeq} \\
\Ad_{K,f} / t \widehat{O}_K \ar[r]_{s^{-1}} & \Ad_{K,f} / s^{-1}t\widehat{O}_K 
}
\end{equation}
Recall that $q_\Lambda (s)$ is defined with respect to the basis of $\Lambda = \integer^2 \cdot \alpha \cdot (i, 1)^t$. Therefore, we have the following commutative diagram: (recall also that $\Lambda_s = \integer^2 \cdot \alpha_s \cdot (i,1)^t$ with $\alpha_s = \beta^{-1} \alpha$; and we write just by $\alpha: \ratf^2/\integer^2 \rightarrow \ratf \Lambda/\Lambda$ to mean the composition $\alpha \cdot (i, 1)^t: \ratf^2/\integer^2 \rightarrow \ratf^2 \alpha/\integer^2 \alpha \rightarrow \ratf \Lambda/\Lambda$.)
\begin{eqnarray}
\xymatrix{
 \ratf \Lambda/ \Lambda \ar[r]^{s^{-1}} \ar[d]_{\alpha ^{-1}} & \ratf \Lambda_s / \Lambda_s \\
 \ratf^2 / \integer^2 \ar[r]_{q_\Lambda (s)^{-1}} & \ratf^2 / \integer^2 \beta^{-1} \ar[u]_{\alpha} 
 }
\end{eqnarray}
This diagram shows that the composition (\ref{composition}) is equal to the following:
\begin{equation}
\label{composition2}
\xymatrix{
\ratf^2/\integer^2 \ar[r]^m & \ratf^2/\integer^2 \ar[r]^{q_\Lambda (s)^{-1}} & \ratf^2 / \integer^2 \beta^{-1} \ar[r]^{\alpha} & \ratf \Lambda_s / \Lambda_s
}.
\end{equation}
Therefore, $m_s \alpha_s = m q_\Lambda (s)^{-1} \alpha = m u \beta^{-1} \alpha$; and hence we have: (note that $\tau = \alpha (i)$ and $\tau_s = \alpha_s(i)$ by definition above.)
\begin{eqnarray}
 f_{m u} \circ \beta^{-1} (\tau) &=& f_{mu} \circ \beta^{-1} \alpha (i) \\
 &=& f_{m_s} (\alpha_s(i)) \\
 &=& f_{m_s} (\tau_s). 
\end{eqnarray}
Consequently, we proved that $(f_m(\tau))^{[s]} = f_{m_s}(\tau_s)$ as requested. This completes the proof. 
\end{proof}

With this lemma, we first prove the one-side inclusion for the target equality $E_K = M_K$:

\begin{lem}
\label{one-sided inclusion}
We have the following inclusion:
\begin{eqnarray}
 M_K &\subseteq& E_K.
\end{eqnarray}
\end{lem}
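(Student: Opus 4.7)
The plan is to invoke the characterization of $E_K$ established in Corollary \ref{characterization of E_K}, which identifies $E_K$ with the $K$-algebra $\Hom_{G_K}(DR_K, \bar{K})$ of locally constant $K^{ab}$-valued $G_K^{ab}$-equivariant functions on $DR_K$. Thus, to show $M_K \subseteq E_K$ it suffices to verify that for every Witt deformation $f: M_2(\widehat{\integer}) \times \upper \to \comp$, the associated modular vector $\widehat{f}$ arises as the restriction to $I_K \hookrightarrow DR_K$ of such a function.

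First I would observe that the defining composition (\ref{composition0}) of $\widehat{f}$ factors through $DR_K$ itself: the whole composition $DR_K \xrightarrow{\simeq} Lat^1_K \hookrightarrow Lat^2_\ratf \xrightarrow{\simeq} \Gamma \backslash (M_2(\widehat{\integer}) \times \upper) \xrightarrow{f} \comp$ defines a function $\tilde{f}: DR_K \to \comp$ whose restriction to $I_K$ coincides with $\widehat{f}$. Local constancy of $\tilde{f}$ follows immediately from axiom (3) in Definition \ref{Witt deformation family}: since $f$ factors through $p_N \times \id_\upper$ for some $N$, and the composition $DR_K \to \Gamma \backslash (M_2(\widehat{\integer}) \times \upper)$ is continuous, $\tilde{f}$ factors through a finite quotient in the $M_2(\widehat{\integer})$-direction, while the $\tau$-direction is pinned down to a single orbit of the imaginary quadratic point $\tau_K$ (or its finitely many translates coming from the class group of $K$); hence $\tilde{f}$ is locally constant.

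Next I would verify that $\tilde{f}$ takes values in $K^{ab}$ and is $G_K^{ab}$-equivariant. The $K^{ab}$-valuedness is immediate from Lemma \ref{component of modular vector}, which expresses $\widehat{f}_\ida = f(m_\ida, \tau_{\ida^{-1}})$ as a special value of a modular function in $\modular$ at a CM point associated to the imaginary quadratic field $K$; by the classical theory of complex multiplication such values lie in $K^{ab}$. The $G_K^{ab}$-equivariance is precisely the content of Lemma \ref{G_K equivariance}: under the class field theory isomorphism $[-]: \Ad_{K,f}^*/K^* \xrightarrow{\simeq} G_K^{ab}$, the $G_K^{ab}$-action on $DR_K$ corresponds to the action $s \cdot [\Lambda, \phi] = [s^{-1}\Lambda, s^{-1}\phi]$ on $Lat_K^1$, and that lemma states exactly $\bigl(f_m(\tau)\bigr)^{[s]} = f_{m_s}(\tau_s)$. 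Combining these two properties gives $\tilde{f} \in \Hom_{G_K}(DR_K, \bar{K}) = E_K$; taking $K$-linear combinations preserves this, and we conclude $M_K \subseteq E_K$.

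The main conceptual work has already been absorbed into Lemma \ref{G_K equivariance}, whose proof carefully matches Shimura's reciprocity law with the adelic description of $DR_K \simeq \widehat{O}_K \times_{\widehat{O}_K^*}(\Ad_{K,f}^*/K^*)$ via the embedding $Lat_K^1 \hookrightarrow Lat_\ratf^2$. Given that lemma, the present step is essentially a bookkeeping verification; the only remaining subtlety to watch is to ensure that $\widehat{f}$, a priori defined only on the dense submonoid $I_K \subseteq DR_K$, extends uniquely and consistently to $DR_K$, which however is automatic since the composition defining $\widehat{f}$ is already defined on all of $DR_K$ and is locally constant.
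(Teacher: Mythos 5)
Your proposal is correct and follows the same route as the paper: both reduce to the characterization $E_K = \Hom_{G_K}(DR_K, \bar{K})$ from Corollary~\ref{characterization of E_K}, deduce local constancy from axiom (3) of Definition~\ref{Witt deformation family} (the paper makes this precise via the factorization of $\widehat{f}$ through $DR_K \twoheadrightarrow DR_{NO_K}$, using $DR_{NO_K} \simeq (O_K/NO_K) \times_{(O_K/NO_K)^*} C_{NO_K}$), and establish $G_K^{ab}$-equivariance and $K^{ab}$-valuedness from Lemma~\ref{G_K equivariance} after matching the Galois action on $\widehat{O}_K \times_{\widehat{O}_K^*}(\Ad_{K,f}^*/K^*)$ with the $s^{-1}$-action on lattices.
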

\begin{proof}
Let $f: M_2(\widehat{\integer}) \times \upper \rightarrow \comp$ be a Witt deformation of modular functions with $\widehat{f} \in M_K$ being the associated modular vector. Firstly we see that the associated function $\widehat{f}: DR_K \rightarrow \comp$ is locally constant. (Here recall that $\widehat{f}: I_K \rightarrow K^{ab} \in M_K$ is defined as a restriction of a (continuous) function $\widehat{f}: DR_K \rightarrow \comp$ onto the dense subset $I_K \subseteq DR_K$; cf.\ Definition \ref{modular vector}.) Since $f: M_2(\widehat{\integer}) \times \upper \rightarrow \comp$ by definition factors through some projection $p_N: M_2(\widehat{\integer}) \twoheadrightarrow M_2(\integer / N \integer)$, this function $\widehat{f}: DR_K \rightarrow \comp$ also factors through the projection $DR_K \twoheadrightarrow DR_{NO_K}$; this readily follows from the isomorphism $DR_{NO_K} \simeq (O_K / NO_K) \times_{(O_K / NO_K)^*} C_{NO_K}$, cf.\ Proposition 7.2 \cite{Yalkinoglu}. Hence, $\widehat{f}: DR_K \rightarrow \comp$ is indeed locally constant on $DR_K$. 

Secondly, to see that $\widehat{f}: DR_K \rightarrow \comp$ is $G_K^{ab}$-equivariant, we remark that the action of $[s] \in G_K^{ab}$ for $s \in \Ad_{K,f}^*$ on the element of $DR_K$ that corresponds to $[\rho, t] \in \widehat{O}_K \times_{\widehat{O}_K^*} (\Ad_{K,f}^* / K^*)$ under the homeomorphism $\psi: DR_K \rightarrow \widehat{O}_K \times_{\widehat{O}_K^*} (\Ad_{K,f}^* / K^*)$ corresponds to $[\rho, s^{-1} t]$ (i.e.\ not $[\rho, st]$); that is, we have $\bigl(\psi^{-1}(\rho, t)\bigr)^{[s]} = \psi^{-1} (\rho, s^{-1} t)$; this follows from the construction of the homeomorphism $\Psi: \widehat{O}_K \times_{\widehat{O}_K^*} G_K^{ab} \rightarrow DR_K$ of \S 8.1 \cite{Yalkinoglu}. Thus, if $\psi^{-1} (\rho, t) \in DR_K$ corresponds to $[m, \tau] \in \Gamma \backslash (M_2(\widehat{\integer}) \times \upper)$ under the correspondence $DR_K \rightarrow \Gamma \backslash (M_2(\widehat{\integer}) \times \upper)$ (cf.\ (\ref{composition0})), then $\bigl(\psi^{-1}(\rho, t)\bigr)^{[s]} = \psi^{-1}(\rho, s^{-1}t) \in DR_K$ corresponds to $[m_s, \tau_s]$ (in the sense of the notation in Lemma \ref{G_K equivariance}). Therefore, we have:
\begin{eqnarray}
 \widehat{f} ( \bigl(\psi^{-1}(\rho, t)\bigr)^{[s]} )  &=& \widehat{f} (\psi^{-1}(\rho, s^{-1}t)) \\
 &=& f_{m_s} (\tau_s) \\
 &=& \bigl(f_m (\tau)\bigr)^{[s]} \\
 &=& \widehat{f} (\psi^{-1}(\rho, t))^{[s]}. 
\end{eqnarray}
In the third equality, we used Lemma \ref{G_K equivariance}; this shows that $\widehat{f}: DR_K \rightarrow \comp$ is indeed $G_K^{ab}$-equivariant. Therefore $\widehat{f}$ is $K^{ab}$-valued locally constant $G^{ab}_K$-equivariant function on $DR_K$, hence $\widehat{f} \in E_K$. This completes the proof. 
\end{proof}

Now we have the inclusion $M_K \subseteq E_K$. Thus, in order to prove our target identity $E_K = M_K$, it suffices to prove that $M_K$ satisfies the above-mentioned two conditions (2) and (3). In the following we first prove the third condition (3), and then prove the second (2), which will complete the proof of our target theorem. 

\begin{lem}
\label{rho}
 For each $\ida \in I_K$, let us define $\rho^\ida \in (K^{ab})^{I_K}$ as follows:
 \begin{eqnarray}
  \rho^\ida (\idb) &:=& \left\{ \begin{array}{ll} 1 & \textrm{if $\ida \mid \idb$} \\ 0 & \textrm{otherwise} \end{array} \right.
 \end{eqnarray}
 Then $\rho^\ida \in M_K$ for every $\ida \in I_K$. 
\end{lem}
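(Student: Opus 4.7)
Plan: The plan is to realize $\rho^\ida$ as the modular vector associated to a characteristic-function Witt deformation of the kind constructed in Example \ref{characteristic functions}.

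First I would set $N := N\ida$ and choose an oriented $\integer$-basis $w'_1, w'_2$ of $\ida^{-1}$ with $w'_1/w'_2 \in \upper$. Writing $\tau_K = a' w'_1 + b' w'_2$ and $1 = c' w'_1 + d' w'_2$ yields an integer matrix $m_\ida \in M_2(\integer)$ (the very matrix appearing in Lemma \ref{component of modular vector} for $\ida$ itself), with $|\det m_\ida| = [\ida^{-1} : O_K] = N\ida = N$. I would then define
\begin{equation*}
 S \; := \; (m_\ida \bmod N) \cdot M_2(\integer / N \integer) \; \subseteq \; M_2(\integer / N \integer),
\end{equation*}
the principal left ideal generated by $m_\ida \bmod N$. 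Since $S$ is stable under right multiplication by every element of $M_2(\integer/N\integer)$, in particular by $GL_2(\integer/N\integer)$, Example \ref{characteristic functions} provides a Witt deformation $f^{(S)}: M_2(\widehat{\integer}) \times \upper \to \comp$.

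Next I would verify that $\widehat{f^{(S)}}_\idb = \rho^\ida(\idb)$ for every $\idb \in I_K$; by Lemma \ref{component of modular vector} this amounts to showing that $m_\idb \bmod N \in S$ if and only if $\ida \mid \idb$. The key observation is that $m_\idb$, exactly like $m_\ida$, is an integer matrix encoding the inclusion $O_K \hookrightarrow \idb^{-1}$ in $\integer$-bases, so the matrix $M := m_\ida^{-1} m_\idb \in M_2(\ratf)$ represents the identity map $K \to K$ in bases of $\ida^{-1}$ and $\idb^{-1}$. Hence $M \in M_2(\integer)$ iff $\ida^{-1} \subseteq \idb^{-1}$ iff $\ida \mid \idb$. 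To upgrade this equivalence to the reduction modulo $N$, I would use the containment $N M_2(\integer) \subseteq m_\ida M_2(\integer)$, which follows from $N m_\ida^{-1} = \pm\, \mathrm{adj}(m_\ida) \in M_2(\integer)$ (because $\det m_\ida = \pm N$); this gives $m_\ida M_2(\integer) + N M_2(\integer) = m_\ida M_2(\integer)$, so $m_\idb \bmod N \in S$ iff $m_\idb \in m_\ida M_2(\integer)$ iff $\ida \mid \idb$.

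The hard part will be the matrix-theoretic interpretation: one must carefully extract from Lemma \ref{component of modular vector} that $m_\idb$, despite its ostensible membership in $M_2(\widehat{\integer})$, can be taken as a genuine integer matrix encoding the lattice inclusion $O_K \hookrightarrow \idb^{-1}$, and then identify $m_\ida^{-1} m_\idb$ with the matrix of the identity of $K$ in two lattice bases. Independence of the various basis choices is automatic, since $m_\ida$ and $m_\idb$ are defined only up to right $\Gamma$-action, and this action preserves the principal left ideal $m_\ida M_2(\integer)$ as well as the property of belonging to it.
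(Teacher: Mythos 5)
Your argument is correct and follows essentially the same route as the paper: both realize $\rho^\ida$ as the modular vector of the characteristic function of the right $M_2(\widehat{\integer})$-orbit (the paper writes it adelically as $q_\tau(s_\ida)M_2(\widehat{\integer})$, you write it mod $N\ida$ as $m_\ida M_2(\integer/N\integer)$ via Lemma \ref{component of modular vector}; these coincide), and both use $\det = N\ida$ to get the factorization through $p_{N\ida}$. One small terminological slip: $m_\ida M_2(\integer/N\integer)$ is a principal \emph{right} ideal, not left; but this is exactly what closure under right $GL_2$-action requires, so the content is unaffected.
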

\begin{proof}
 This function $\rho^\ida$ corresponds to the characteristic function of $\ida DR_K$, which we now show belongs to $M_K$. Let us fix $s_\ida \in \widehat{O}_K$ such that $\ida = s_\ida \widehat{O}_K \cap K$. Under the homeomorphism $DR_K \simeq \widehat{O}_K \times_{\widehat{O}_K^*} (\Ad_{K,f}^* / K^*)$ of \cite{Yalkinoglu}, the subspace $\ida DR_K \subseteq DR_K$ corresponds to $\ida \widehat{O}_K \times_{\widehat{O}_K^*} (\Ad_{K,f}^* / K^*)$; and note that, under the further composition $\widehat{O}_K \times_{\widehat{O}_K^*} (\Ad_{K,f}^* / K^*) \rightarrow Lat^1_K \rightarrow Lat^2_\ratf \rightarrow \Gamma \backslash (M_2(\widehat{\integer}) \times \upper)$, this subspace is eventually mapped into $\Gamma \backslash \bigl(q_\tau (s_\ida) M_2(\widehat{\integer}) \times \upper \bigr)$, where $q_\tau: \Ad_{K,f} \rightarrow M_2(\Ad_{\ratf,f})$ denotes the embedding determined by $O_K = \integer \tau + \integer$. Let us define the function $f^\ida: M_2(\widehat{\integer}) \times \upper \rightarrow \comp$ as the characteristic function of the subspace $q_\tau (s_\ida) M_2(\widehat{\integer}) \times \upper \subseteq M_2(\widehat{\integer}) \times \upper$. It is clear that $f^\ida$ satisfies the conditions (1), (2) of Witt deformations of modular functions. 
 In order to see that $f^\ida$ also satisfies the condition (3), we note that $f^\ida$ factors through $p_{N(\ida)}: M_2(\widehat{\integer}) \twoheadrightarrow M_2(\integer/ N(\ida) \integer)$, where $N(\ida)$ is the absolute norm of $\ida$. In fact, this follows from the fact that the kernel of $p_{N(\ida)}$ is $N(\ida) M_2(\widehat{\integer})$, which is included in $q_\tau (s_\ida) M_2(\widehat{\integer})$. From these facts, we find that $f^\ida$ is in fact a Witt deformation of modular functions; and by construction, it is clear that $\widehat{f}^\ida = \rho^\ida$. This implies that $\rho^\ida \in M_K$ as requested. This completes the proof. 
\end{proof}

\begin{lem}
\label{enough}
 The functions in $M_K$ are enough to separate the points of $DR_K$. 
\end{lem}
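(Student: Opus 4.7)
The plan is to translate point-separation on $DR_K$ into the concrete model $\Gamma \backslash (M_2(\widehat{\integer}) \times \upper)$ provided by \S \ref{s2.2}, and then exhibit separating functions drawn from the three families of Witt deformations already at hand: the $j$-deformation (Example \ref{j}), the Fricke deformations $\chi_a$ (Example \ref{Fricke functions}), and the characteristic deformations $f^{(S)}$ (Example \ref{characteristic functions}). Two distinct points $x \neq y \in DR_K$ map under the embedding $DR_K \hookrightarrow \Gamma \backslash (M_2(\widehat{\integer}) \times \upper)$ to distinct classes $[m_x, \tau_x] \neq [m_y, \tau_y]$, where the coordinates $\tau_x, \tau_y$ lie in $K \cap \upper$ since $DR_K$ parameterizes $K$-lattices.

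I would then proceed by cases on the $\tau$-components. If $j(\tau_x) \neq j(\tau_y)$, the $j$-deformation immediately separates: $\widehat{j}(x) = j(\tau_x) \neq j(\tau_y) = \widehat{j}(y)$ by Lemma \ref{component of modular vector}. Otherwise, after choosing suitable representatives in each $\Gamma$-orbit, we may assume $\tau_x = \tau_y =: \tau$, and then $m_x, m_y$ must differ modulo the stabilizer $\Gamma_\tau = \mathrm{Stab}_\Gamma(\tau)$. Exploiting profiniteness of $DR_K$, I pick a level $N$ such that the reductions $p_N(m_x), p_N(m_y)$ are already inequivalent modulo $p_N(\Gamma_\tau)$ in $M_2(\integer/N\integer)$, and make one further split according to whether they lie in the same right $GL_2(\integer/N\integer)$-orbit. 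If not, the characteristic deformation $f^{(S)}$ with $S$ equal to the orbit of $p_N(m_x)$ yields $\widehat{f^{(S)}}(x) = 1 \neq 0 = \widehat{f^{(S)}}(y)$. If so, write $m_y \equiv m_x u \pmod{N}$ with $u \in GL_2(\integer/N\integer)$ outside $p_N(\Gamma_\tau) \cdot \{\pm 1\}$; then for an appropriate Fricke deformation $\chi_a$, Lemma \ref{component of modular vector} combined with the identity $f_a^u = f_{au}$ recorded in Example \ref{Fricke functions} yields $\widehat{\chi_a}(y) = f_{am_x}(\tau)^u$, which I would show differs from $\widehat{\chi_a}(x) = f_{am_x}(\tau)$ for a suitable $a$ by Shimura's reciprocity at the CM point $\tau$.

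The hard part will be this last subcase: one must verify that any $u \in GL_2(\integer/N\integer) \setminus p_N(\Gamma_\tau) \cdot \{\pm 1\}$ moves at least one value $f_{am_x}(\tau)$ as $a$ ranges over $N^{-1}\integer^2/\integer^2$. This reduces to the faithfulness of the Shimura reciprocity action of $GL_2(\integer/N\integer)/\{\pm 1\}$ on the ray class field of $K$ of conductor $NO_K$ generated by Fricke values at $\tau$, which is classical \cite{Shimura} but not entirely transparent from the formal set-up (especially when $m_x$ is not invertible mod $N$, in which case the image of $a \mapsto am_x$ is a proper submodule and one may need to pass to more general equivariant Witt deformations $\phi(m) \in \modular_N^{G_{m_x}}$). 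Once this faithfulness is secured, the three Witt-deformation families together separate every pair of points of $DR_K$, completing the hypothesis of Neshveyev's characterization needed to identify $M_K$ with $E_K$.
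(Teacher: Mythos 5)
Your proposal takes a genuinely different route from the paper, and it has a real gap at exactly the point where the paper's organization pays off.

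The paper first pushes $x \neq y$ down to a finite quotient $DR_{NO_K}$ and then uses the decomposition $DR_{NO_K} = \coprod_{\idd \mid NO_K} C_{NO_K/\idd}$. If $\bar x,\bar y$ land in different components, the idempotent deformations $\rho^\ida$ from Lemma~\ref{rho} separate them; if they land in the same component $C_{NO_K/\idd}$, they differ as elements of a ray class group, so the Artin symbols $[s],[t]$ act differently on the ray class field $K_{NO_K/\idd}$, and the corollary in \S 3, Ch.\ 10 of \cite{Lang} produces a Fricke value $f_a(\tau_K)$ (with $a \in (N^{-1}\integer^2/\integer^2)\cdot q_{\tau_K}(\rho)$) that the two symbols move apart. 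The Witt deformation is then built by multiplying a Fricke deformation against a characteristic function supported on $q_{\tau_K}(\rho) M_2(\widehat{\integer})$, and $G_K^{ab}$-equivariance (Lemma~\ref{one-sided inclusion}) converts the separation of symbols into separation of values $\widehat f(x)\neq\widehat f(y)$. You instead decompose by the $j$-value of the $\tau$-component, then by right $GL_2(\integer/N\integer)$-orbit of the $m$-component, and finally appeal to Fricke deformations. The $j$-separation step is fine, and the characteristic-deformation step (different $GL_2$-orbits) is analogous to the paper's idempotent step, though note that the two decompositions are not the same thing: the paper's is by $\gcd(\ida, NO_K)$, which is independent of ideal class.

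The problem is your final subcase, which you yourself flag as the ``hard part.'' Your reduction to ``faithfulness of the Shimura reciprocity action of $GL_2(\integer/N\integer)/\{\pm1\}$'' is not the right statement: when $m_x$ is non-invertible, the $u$ that conjugates $m_x$ to $m_y$ is only determined up to the right stabilizer of $m_x$, the Fricke indices $a m_x$ range over a proper $\integer/N\integer$-submodule, and the values $f_{am_x}(\tau)$ generate a proper ray class subfield (of conductor dividing $NO_K$, determined by the gcd). The naive faithfulness statement is false in this regime, and your parenthetical ``may need to pass to more general equivariant Witt deformations $\phi(m)\in \modular_N^{G_{m_x}}$'' is not worked out. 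What one actually needs is precisely the ray-class-field generation statement (Lang's corollary quoted by the paper), applied with the conductor $NO_K/\idd$ adapted to $\idd=(\ida,NO_K)$; this is the ingredient your proposal identifies but does not supply. A secondary notational slip: $(f_{am_x})^u(\tau)$ is the action of $u$ on the function then evaluated at $\tau$, whereas $(f_{am_x}(\tau))^u$ (a Galois action on the value) only makes sense after $u$ is matched with an idele via $q_\tau$; the two coincide only by Shimura reciprocity, and conflating them obscures exactly where the class field theory enters. You would also need the standard replacement of $f_a$ by $f_a^2$ or $f_a^3$ when $K=\ratf(i)$ or $\ratf(\sqrt{-3})$, which your treatment of $\Gamma_\tau$ gestures at but does not carry through.
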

\begin{proof}
Let $x \neq y \in DR_K$ be distinct points. Since $DR_K$ is the inverse limit of $DR_\idf$'s for $\idf \in I_K$, there exists $\idf \in I_K$ such that $x$ and $y$ still represent distinct elements of $DR_\idf$ under the projection $DR_K \twoheadrightarrow DR_\idf$; also since the ideals $N O_K$ for positive integers $N$ are cofinal in $I_K$, we may suppose $\idf = N O_K$.  In this proof let us denote by $\bar{x}, \bar{y} \in DR_\idf$ those in $DR_\idf$ represented by $x, y \in DR_K$ under the projection $DR_K \twoheadrightarrow DR_\idf$. Here recall the decomposition $DR_\idf = \coprod_{\idd \mid \idf} C_{\idf/\idd}$ (see (4.4) in pp.394 \cite{Yalkinoglu}). If $\bar{x}$ and $\bar{y}$ belong to distinct components, then the characteristic functions defined above are enough to separate $x, y$. 

So we suppose that $\bar{x}, \bar{y}$ belong to the same component $C_{\idf/\idd}$ for some $\idd \mid \idf$ but represent distinct elements. This implies that there exist $[\rho, s], [\rho, t] \in \widehat{O}_K \times_{\widehat{O}_K^*} (\Ad_{K,f}^* / K^*)$ such that $\idd = \rho \widehat{O}_K \cap K$ and $[\rho, s], [\rho, t]$ correspond respectively to $\bar{x}, \bar{y}$ under the composition $\widehat{O}_K \times_{\widehat{O}_K^*} (\Ad_{K,f}^* / K^*) \simeq DR_K \twoheadrightarrow DR_\idf$; also, by $\bar{x} \neq \bar{y} \in C_{\idf/\idd} \subseteq DR_\idf$, the actions of $[s], [t] \in Gal(K^{ab}/ K)$ are not identical on $K_{\idf/\idd}$. Thus by the corollary in \S 3, Chapter 10  \cite{Lang}, we can find $a \in (N^{-1} \integer^2 / \integer^2) \cdot q_{\tau_K} (\rho) \simeq N^{-1}\idd / O_K$ so that $(f_a(\tau_K))^{[s]^{-1}} \neq (f_a (\tau_K))^{[t]^{-1}}$. (To be precise, when $K = \ratf(i)$ or $K= \ratf(\sqrt{-3})$, we replace $f_a$'s in Example \ref{Fricke functions} with the functions $f_a^2$'s or $f_a^3$'s given in \S 6.1 \cite{Shimura}.) With these data, let us now define a function $f: M_2(\widehat{\integer}) \times \upper \rightarrow \comp$ as follows: (Here $q_{\tau_K}: \Ad_{K,f} \rightarrow M_2(\Ad_{\ratf,f})$ denotes the embedding determined by $O_K = \integer \tau_K + \integer$.)
\begin{eqnarray}
 f (m, \tau) &:=& \left\{ \begin{array}{ll} 0 & \textrm{if $m \not \in q_{\tau_K} (\rho) M_2(\widehat{\integer}) \subset M_2(\widehat{\integer})$} \\ f_{a m'} (\tau) & \textrm{if $m = q_{\tau_K} (\rho) m'$ for $\exists ! m' \in M_2(\widehat{\integer})$} \end{array} \right.
\end{eqnarray}
By the same argument as above and $a \in (N^{-1} \integer^2/ \integer^2) \cdot q_{\tau_K}(\rho)$, we see that this function $f$ factors through the quotient $M_2(\widehat{\integer}) \twoheadrightarrow M_2(\integer/ N \integer)$; and  $f$ defines a Witt deformation of modular functions. We now show that the modular vector $\widehat{f}$ associated to this $f$ separates $x$ and $y$. Note that, since $f$ factors through $M_2(\widehat{\integer}) \twoheadrightarrow M_2(\integer / N \integer)$, the function $\widehat{f}: DR_K \rightarrow \comp$ also factors through $DR_K \twoheadrightarrow DR_\idf = DR_{NO_K}$. (Again recall that we have $DR_{NO_K} \simeq (O_K / N O_K) \times_{(O_K / N O_K)^{*}} C_{NO_K}$; cf.\ Proposition 7.2 \cite{Yalkinoglu}.) Then, by construction and the $G^{ab}_K$-equivariance (cf.\ Lemma \ref{one-sided inclusion}) of this function $\widehat{f}:DR_K \simeq \widehat{O}_K \times_{\widehat{O}_K^*} (\Ad_{K,f}^* / K^*) \rightarrow \comp$, we have:
\begin{equation}
\widehat{f}(x) = \widehat{f}(\rho, s) = (\widehat{f}(\rho, 1))^{[s]^{-1}} 
= (f_a (\tau_K))^{[s]^{-1}} \neq (f_a (\tau_K))^{[t]^{-1}} 
= (\widehat{f}(\rho, 1))^{[t]^{-1}}
= \widehat{f}(\rho, t) 
= \widehat{f}(y). 
\end{equation}
This completes the proof. 
\end{proof}

\begin{cor}
 We have the following inclusion:
 \begin{eqnarray}
  E_K &\subseteq& M_K; 
 \end{eqnarray}
 hence $E_K = M_K$. 
\end{cor}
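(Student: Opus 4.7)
The plan is to apply Neshveyev's characterization of $E_K$ (Theorem 10.1 of \cite{Yalkinoglu}), as summarized in the four conditions (1)--(4) listed just after the statement of the modularity theorem. As explicitly noted there, once one has an intermediate $K$-subalgebra $E \subseteq E_K$ satisfying the point-separation condition (2) and the idempotent condition (3), the characterization forces $E = E_K$. So the entire argument is the assembly of the lemmas already proved.

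First I would invoke Lemma \ref{one-sided inclusion} to realize $M_K$ as a $K$-subalgebra of $E_K$. In particular, every $\widehat{f} \in M_K$ is automatically a $K^{ab}$-valued, locally constant, $G_K^{ab}$-equivariant function on $DR_K$, so conditions (1) and (4) of Neshveyev's list are inherited for free from the containment $M_K \subseteq E_K$. Next I would cite Lemma \ref{rho} to give condition (3): for each $\ida \in I_K$, the idempotent $\rho^\ida$ is the modular vector associated to the Witt deformation $f^\ida$ constructed from the characteristic function of $q_\tau(s_\ida) M_2(\widehat{\integer}) \times \upper$. Finally I would cite Lemma \ref{enough} to give condition (2): given distinct $x \neq y \in DR_K$, the Witt deformations assembled from the characteristic functions on the components $C_{\idf / \idd}$ of $DR_\idf$ together with the deformations built from the Fricke-type functions $f_a$ (or $f_a^2, f_a^3$ in the two special cases $K = \ratf(i), \ratf(\sqrt{-3})$) produce a modular vector distinguishing $x$ from $y$.

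With all four conditions verified for $M_K$, Neshveyev's characterization yields $E_K \subseteq M_K$, and combined with the reverse inclusion from Lemma \ref{one-sided inclusion} this gives the identity $E_K = M_K$ as $K$-subalgebras of $(K^{ab})^{I_K}$. There is no real obstacle left here: the only moving parts are the two conditions (2) and (3), and both have already been established by explicit Witt-deformation constructions in the preceding lemmas. Effectively, the corollary is a bookkeeping statement that packages Lemmas \ref{one-sided inclusion}, \ref{rho}, and \ref{enough} together through Neshveyev's density/separation criterion, so the write-up should be a short paragraph citing these three lemmas and invoking Theorem 10.1 of \cite{Yalkinoglu}.
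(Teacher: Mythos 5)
Your proposal is correct and matches the paper's own proof exactly: the corollary is obtained by combining Lemma \ref{one-sided inclusion} (giving $M_K \subseteq E_K$, hence conditions (1) and (4)), Lemma \ref{rho} (condition (3)), and Lemma \ref{enough} (condition (2)), and then invoking Theorem 10.1 of \cite{Yalkinoglu}. Nothing is missing and no alternate route is taken.
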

\begin{proof}
In the above developments, we have proved that $M_K$ is a $K$-subalgebra of $E_K$, separates the points of $DR_K$, and contains the idempotents $\rho^\ida$ for $\ida \in I_K$. Therefore, by Theorem 10.1 \cite{Yalkinoglu}, we conclude the target identity $E_K = M_K$. 
\end{proof}

\subsection{Explicit generators}
\label{s4.4}
To further clarify the description of $E_K$ as $M_K$ (Theorem \ref{modularity theorem}), we conclude this paper 
by giving a set of explicit generators of $M_K = E_K$. To be more specific we demonstrate that the galois objects 
$X_{\Xi(N)}$ generated by the modular vectors $\Xi(N) := \{\widehat{\chi}_a \in M_K \mid a \in N^{-1} \integer^2/\integer^2 \}$ $(N \geq 1)$ are cofinal among the galois objects of $\C_K$; in particular, as we discuss below, this implies that $E_K = M_K$ is generated by the modular vectors $\widehat{\chi}_a$ coming from the Fricke
 functions $f_a$. (For $K=\ratf(i), \ratf(\sqrt{-3})$, we replace $f_a$'s with $f_a^2$'s and $f_a^3$'s in \S 6.1 \cite{Shimura}.)
 
 Throughout this subsection, we abusively denote for brevity as $f_a = \chi_a: M_2(\widehat{\integer}) \times \upper \ni (m, \tau) \mapsto f_{am}(\tau) \in \comp$, thus $\widehat{f}_a = \widehat{\chi}_a$; again recall that we put $f_0 := j$. To prove the following theorem is our major goal in this subsection: 
 
 \begin{thm}[Fricke functions are enough to generate $E_K$]
 \label{explicit modularity theorem}
 The $K$-algebra $E_K$ is generated as a $K$-algebra by the modular vectors $\widehat{f}_a$ for $a \in \ratf^2/\integer^2$. That is:
 \begin{eqnarray}
  E_K &=& K[ \widehat{f}_a \mid a \in \ratf^2/\integer^2]. 
 \end{eqnarray}
 \end{thm}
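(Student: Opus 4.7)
The plan is to prove $E_K = \bigcup_{N \geq 1} X_{\Xi(N)}$, where $\Xi(N) := \{\widehat{f}_a \mid a \in N^{-1}\integer^2/\integer^2\}$ and $X_{\Xi(N)}$ is the associated galois object of $\C_K$ from Proposition \ref{X_Xi is galois} (applied after rescaling each $\widehat{f}_a \in E_K = K \otimes \intwitt$ into $\intwitt$). Combined with Theorem \ref{E_K}, which identifies $E_K$ with the direct limit of its galois objects, this reduces the target equality to two steps: (i) $X_{\Xi(N)} \subseteq K[\widehat{f}_a \mid a \in \ratf^2/\integer^2]$ for every $N \geq 1$, and (ii) the family $\{X_{\Xi(N)}\}_N$ is cofinal among the galois objects of $\C_K$.

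For (i), $X_{\Xi(N)}$ is generated as a $K$-algebra by the Frobenius orbit $I_K \cdot \Xi(N)$, so it suffices to verify that every shift $\psi_\ida \widehat{f}_a$ is again of the form $\widehat{f}_b$ with $b \in N^{-1}\integer^2/\integer^2$. Tracing the chain of identifications $DR_K \simeq Lat_K^1 \hookrightarrow Lat_\ratf^2 \simeq \Gamma \backslash (M_2(\widehat{\integer}) \times \upper)$ from \S \ref{s2.2}, the action of $\ida \in I_K$ on $DR_K$ corresponds to left-multiplication of the first factor by $q_{\tau_K}(s_\ida) \in M_2(\widehat{\integer})$; combining this with Lemma \ref{component of modular vector} then yields
\begin{equation}
  \psi_\ida \widehat{f}_a \;=\; \widehat{f}_{a \cdot q_{\tau_K}(s_\ida)}.
\end{equation}
Since $N^{-1}\integer^2/\integer^2 \subseteq \ratf^2/\integer^2$ is stable under the right $M_2(\widehat{\integer})$-action (it is the $N$-torsion), this shift stays in $\Xi(N)$, giving $X_{\Xi(N)} = K[\widehat{f}_a \mid a \in N^{-1}\integer^2/\integer^2]$ as required.

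For (ii), by the duality $\C_K^{op} \simeq \cl DR_K$ of \S \ref{s2.1}, cofinality translates to the separation statement that the family $\{\widehat{f}_a \mid a \in \ratf^2/\integer^2\}$ distinguishes the points of $DR_K$. For $x \in DR_K$ corresponding to $[m_x, \tau_x] \in \Gamma \backslash (M_2(\widehat{\integer}) \times \upper)$ one has $\widehat{f}_a(x) = f_{a m_x}(\tau_x)$, and in particular $\widehat{j}(x) = j(\tau_x)$. Suppose $\widehat{f}_a(x) = \widehat{f}_a(y)$ for every $a$; the bijection $j: \Gamma \backslash \upper \xrightarrow{\sim} \comp$ then lets us reduce to $\tau_x = \tau_y =: \tau$. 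The main obstacle is handling the remaining equalities $f_{a m_x}(\tau) = f_{a m_y}(\tau)$ uniformly in $a$. Using the relation $f_a(\tau) = f_b(\tau) \Leftrightarrow a \equiv \pm b \bmod \integer^2$ (which follows from $f_a(\tau) = (g_2 g_3 / \Delta)(\tau) \cdot \wp(a_1 \tau + a_2)$ and the $2$-to-$1$ property of $\wp$), one finds that $\ratf^2/\integer^2$ is covered by the two subgroups $\{a : a(m_x - m_y) = 0\}$ and $\{a : a(m_x + m_y) = 0\}$; these annihilators are proper whenever the corresponding element of $M_2(\widehat{\integer})$ is nonzero (a routine $p$-adic check using $\bigcap_k p^k M_2(\integer_p) = 0$), and since a group is never a union of two proper subgroups, one of $m_x \mp m_y$ must vanish, i.e., $m_x = \pm m_y$. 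As $-I \in \Gamma$ identifies $(m, \tau)$ with $(-m, \tau)$, this yields $[m_x, \tau_x] = [m_y, \tau_y]$ and hence $x = y$ via the injection $DR_K \hookrightarrow \Gamma \backslash (M_2(\widehat{\integer}) \times \upper)$. This separation gives cofinality of the $X_{\Xi(N)}$, and combining with (i) and Theorem \ref{E_K} we conclude $E_K = \bigcup_N X_{\Xi(N)} = K[\widehat{f}_a \mid a \in \ratf^2/\integer^2]$.
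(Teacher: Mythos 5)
Your proposal takes a genuinely different route from the paper. Where the paper (Lemmas \ref{equal gcds} and \ref{subdivision}) proves that the monoid congruence $\equiv_{\Xi(N)}$ coincides exactly with the ray-class congruence $\sim_{NO_K}$---using ramification of ray class fields and the presentation $K_{NO_K/\idc} = H_K\bigl(f_a(\tau_K) \mid a \bigr)$ from \cite{Lang}---you instead argue via a direct separation of points in $DR_K$, reducing to the two-to-one behavior of the Weierstrass $\wp$-function. This is an appealing alternative and identifies the right mechanism behind the separation, but as written it has genuine gaps.

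The most serious one is in step (i). Your formula $\psi_\ida \widehat{f}_a = \widehat{f}_{a \cdot q_{\tau_K}(s_\ida)}$ is not correct: the action of $\ida$ on $DR_K$ does not amount to multiplication on the $M_2(\widehat{\integer})$-factor alone, since the underlying lattice simultaneously passes from $\idc^{-1}$ to $(\ida\idc)^{-1}$, so the $\upper$-coordinate also changes. Unwinding this via Shimura reciprocity introduces a componentwise Galois twist, and the correct identity (the paper's Lemma \ref{last lemma}) is $\psi_\ida \widehat{f}_a = \widehat{f}_{a q_{\tau}(s_\ida)}^{[s_\ida]}$. Your conclusion $X_{\Xi(N)} = K[\widehat{f}_a \mid a \in N^{-1}\integer^2/\integer^2]$ is still true, but repairing the argument requires showing in addition that this $K$-algebra is stable under the componentwise $G_K^{ab}$-action---exactly the extra observation the paper makes.

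There are two further issues in step (ii). First, the $\wp$-based separation fails for $K = \ratf(i)$ and $K = \ratf(\sqrt{-3})$: there $j(\tau_K) \in \{1728, 0\}$, one of $g_2, g_3$ vanishes so $f_a \equiv 0$, and the stabilizer of $\tau_K$ in $\Gamma$ is strictly larger than $\{\pm I\}$, so the final step $[m_x,\tau] = [m_y,\tau]$ does not follow from $m_x = \pm m_y$ alone. The paper addresses this by replacing $f_a$ with $f_a^2$ or $f_a^3$ (\S 6.1 of \cite{Shimura}) for these two fields. Second, the passage from ``the family $\{\widehat{f}_a\}$ separates points of $DR_K$'' to ``the $X_{\Xi(N)}$ are cofinal among galois objects'' is asserted but not justified; it needs a compactness argument (assume cofinality fails at some $\idf$, pass to a convergent subnet of counterexample pairs $(\ida_N,\idb_N)$, and produce two distinct points of $DR_K$ not separated by any $\widehat{f}_a$). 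This is not difficult, but it is not automatic, and the paper sidesteps it entirely by establishing the equality of congruences directly.
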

  
The proof of this theorem is based on three lemmas: With the first two lemmas (Lemma \ref{equal gcds}, Lemma \ref{subdivision}) we see that, for every positive integer $N$, the monoid congruence $\ida \sim_{NO_K} \idb$ on $I_K$ (cf.\ (\ref{ideal congruence})) coincides with the monoid congruence $\ida \equiv_{\Xi(N)} \idb$ (cf.\ \S \ref{s3.1}), which implies that the galois objects $X_{\Xi(N)}$ are cofinal in all the galois objects in $\C_K$; the last lemma (Lemma \ref{last lemma}) then proves that $X_{\Xi(N)}$ is generated by $\Xi(N)$ (less than the orbit $I_K \Xi(N)$). Since $E_K$ is the direct limit (i.e.\ the union) of the galois objects of $\C_K$, hence of $X_{\Xi(N)}$'s, these three lemmas will conclude the target theorem. 

We start with the following lemma:

\begin{lem}
\label{equal gcds}
Let $N$ be a positive integer. Then, for any $\ida, \idb \in I_K$ (not necessarily prime to $N$), $\ida \equiv_{\Xi(N)} \idb$ implies the equality $(\ida, NO_K) = (\idb, NO_K)$ of greatest common divisors. 
\end{lem}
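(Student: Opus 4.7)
By definition of $\equiv_{\Xi(N)}$, the hypothesis amounts to
\[
(\widehat f_a)_{\ida\idc}\;=\;(\widehat f_a)_{\idb\idc}\qquad(\forall\,\idc\in I_K,\ \forall\,a\in N^{-1}\integer^2/\integer^2),
\]
and the plan is to specialize $\idc$ so that these equalities become a system of equalities of Fricke values at the CM point $\tau_K$, from which the gcd condition can then be read off. First, setting $a=0$ (so $f_0=j$ and $\widehat f_0=\widehat j$, with $(\widehat j)_\ida=j(\ida^{-1})$ as in Proposition \ref{witt vector from j function}) and $\idc=O_K$ gives $j(\ida^{-1})=j(\idb^{-1})$; since $j\colon\Gamma\backslash\upper\xrightarrow{\sim}\comp$ is injective, the fractional ideals $\ida^{-1},\idb^{-1}$ are homothetic in $\comp$, and because both are $K$-fractional ideals the homothety scalar lies in $K^{*}$, so $[\ida]=[\idb]$ in $\mathrm{Cl}(K)$. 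By Chebotarev I choose a prime $\idc\in P_K$ inverse to this common ideal class and coprime to $NO_K$; then $\ida\idc=(\alpha)$ and $\idb\idc=(\beta)$ with $\alpha,\beta\in O_K$, and the coprimality $(\idc,NO_K)=O_K$ immediately yields $(\ida,NO_K)=(\alpha,NO_K)$ and $(\idb,NO_K)=(\beta,NO_K)$, so it suffices to prove $(\alpha,NO_K)=(\beta,NO_K)$.

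Next, applying the hypothesis at this $\idc$ yields $(\widehat f_a)_{(\alpha)}=(\widehat f_a)_{(\beta)}$ for all $a\in N^{-1}\integer^2/\integer^2$. A direct computation with the diagram of Lemma \ref{component of modular vector}, using the basis $(\alpha^{-1}\tau_K,\alpha^{-1})$ of the lattice $(\alpha)^{-1}=\alpha^{-1}O_K$, shows that one may take $m_{(\alpha)}=M_\alpha\in M_2(\integer)\subseteq M_2(\widehat{\integer})$---the matrix of multiplication by $\alpha$ in the basis $(\tau_K,1)$ of $O_K$---and $\tau_{(\alpha)^{-1}}=\tau_K$; similarly for $\beta$. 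The hypothesis thus becomes
\[
f_{aM_\alpha}(\tau_K)\;=\;f_{aM_\beta}(\tau_K)\qquad(\forall\,a\in N^{-1}\integer^2/\integer^2).
\]

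The final step is to extract from this system the matrix-level identity $\bar M_\beta=M_\epsilon\bar M_\alpha$ in $M_2(\integer/N\integer)$ for some unit $\epsilon\in O_K^{*}$, i.e.\ $\bar\beta=\epsilon\bar\alpha$ in $O_K/NO_K$. This is the content of the separation result already invoked in Lemma \ref{enough}: the corollary in \S 3 of Chapter 10 in \cite{Lang} (with $f_a$ replaced by $f_a^2$ or $f_a^3$ when $K=\ratf(i)$ or $\ratf(\sqrt{-3})$, which absorbs the extra units) says that, at the CM point $\tau_K$, the Fricke values $f_a(\tau_K)$ as $a$ ranges over $N^{-1}\integer^2/\integer^2$ generate the ray class field $K_{NO_K}/K$ and have exactly the coincidences implied by the $O_K^{*}$-action on indices. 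Feeding this separation into the above equalities---testing successively against $a=(1/N,0),(0,1/N)$ to pin down each row of $\bar M_\alpha$ relative to $\bar M_\beta$ and against $a=(1/N,1/N)$ to make the units consistent across rows---forces $\bar M_\beta=M_\epsilon\bar M_\alpha$. Since units preserve ideals, $(\beta,NO_K)=(\epsilon\alpha,NO_K)=(\alpha,NO_K)$, which completes the reduction. The main obstacle is this final step: translating scalar equalities of Fricke values into a matrix-level identity is where the arithmetic of the imaginary quadratic $K$---Shimura's reciprocity together with the main theorem of complex multiplication---genuinely enters, while the preceding reductions are routine unwindings of the definitions from \S\ref{s2.2} and \S\ref{s4.2}.
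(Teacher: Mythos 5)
Your proof is correct in its main thrust, but it takes a genuinely different route from the paper. The paper argues by contraposition and ramification: from $(\ida, NO_K)\neq(\idb, NO_K)$ it isolates a prime $\idp\mid NO_K$ with $v_\idp(\ida)<v_\idp(\idb)$, picks a careful multiplier $\idp^e$, and uses the corollary in \S3, Ch.\ 10 of \cite{Lang} to exhibit a Fricke value generating a ray class field in which $\idp$ ramifies on the $\ida$-side but not on the $\idb$-side. You instead argue directly: use the $a=0$ (i.e.\ $j$) component and the injectivity of $j$ on $\Gamma\backslash\upper$ to get $[\ida]=[\idb]$ in $\mathrm{Cl}(K)$, multiply by an auxiliary prime $\idc$ coprime to $N$ to reduce to integral principal ideals $(\alpha),(\beta)$, compute $m_{(\alpha)}=M_\alpha$ and $\tau_{(\alpha)^{-1}}=\tau_K$ from the diagram of Lemma \ref{component of modular vector}, and then read off a congruence $\bar\alpha\equiv\epsilon\bar\beta\pmod{NO_K}$ from the equality of Fricke/Weber values at the CM point. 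Note that your argument in fact yields the stronger conclusion $\ida\sim_{NO_K}\idb$, i.e.\ it subsumes not just Lemma \ref{equal gcds} but also the ``if'' direction of Lemma \ref{subdivision}; the paper deliberately splits these into two lemmas because it proves the gcd equality by a ramification/conductor argument and then separates points within a fixed component of $DR_{NO_K}$ afterwards. Your route is arguably more direct and avoids the $\idp^e$-gymnastics, at the cost of leaning on the injectivity of $j$ and the separation property of the Weber function rather than on ramification of ray class fields.

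Two points in your final step need tightening. First, the citation: the corollary in \S3, Ch.\ 10 of \cite{Lang} is the generation statement $K_\idf=K\bigl(j(\tau_K),\,f_a(\tau_K)\bigr)$; the fact you are actually using --- that $f_a(\tau_K)=f_b(\tau_K)$ (with $f_a$ replaced by $f_a^2$ resp.\ $f_a^3$ for $K=\ratf(i),\ratf(\sqrt{-3})$) precisely when $a_1\tau_K+a_2=\epsilon(b_1\tau_K+b_2)$ in $K/O_K$ for some $\epsilon\in O_K^*$ --- is the injectivity of the Weber function on $E/\mathrm{Aut}(E)$, which is a separate (and more elementary) ingredient of the Main Theorem of CM (\S1 of the same chapter). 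Second, the ``testing successively against $a=(1/N,0),(0,1/N),(1/N,1/N)$ to make the units consistent across rows'' is both vague and unnecessary: since $q_{\tau_K}(0,1/N)=1/N\bmod O_K$ is a generator of the cyclic $O_K$-module $N^{-1}O_K/O_K$, the single test $a=(0,1/N)$ already gives $\alpha/N\equiv\epsilon\beta/N\pmod{O_K}$ for some $\epsilon\in O_K^*$, i.e.\ $\alpha\equiv\epsilon\beta\pmod{NO_K}$, from which $(\alpha,NO_K)=(\beta,NO_K)$ follows immediately. With these two corrections the argument is sound.
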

\begin{proof}
Assume $(\ida, NO_K) \neq (\idb, NO_K)$, from which we deduce $\ida \not \equiv_{\Xi(N)} \idb$. By this assumption we have $\idp \mid NO_K$ such that $v_\idp (\ida) \neq v_\idp (\idb)$, and without loss of generality, we may assume $v_\idp (\ida) < v_\idp (\idb)$ and $v_\idp (\ida) < v_\idp (NO_K)$. We need to prove that there exists $a \in N^{-1} \integer^2 / \integer^2$ such that $\psi_\ida \widehat{f}_a \neq \psi_\idb \widehat{f}_a$, whence $\ida \not \equiv_{\Xi(N)} \idb$; so, more specifically, we prove $\widehat{f}_a (\idp^e \ida) \neq \widehat{f}_a (\idp^e \idb)$ for some $a \in N^{-1} \integer^2/\integer^2$, where $e := \max \{0, v_\idp(NO_K) - v_\idp (\idb)\} < v_\idp(NO_K) - v_\idp(\ida)$ .  

To prove $\widehat{f}_a (\idp^e \ida) \neq \widehat{f}_a (\idp^e \idb) \in K^{ab}$ for some $a \in N^{-1} \integer^2 / \integer^2$, it suffices to find $a \in N^{-1} \integer^2/\integer^2$ such that $\widehat{f}_a (\idp^e \ida)$ and $\widehat{f}_a (\idp^e \idb)$ generate different fields over $H_K$. To this end recall Corollary in \S 3, Chapter 10, \cite{Lang}, which shows that the ray class field $K_{\idf}$ over $K$ with conductor $\idf := NO_K / \idc \in I_K$ (for $\idc \mid NO_K$) can be given as: 
\begin{eqnarray}
 K_{NO_K / \idc} &=&  H_K\bigl(f_a (\tau_K) \mid a \in (N^{-1}\integer^2 / \integer^2)\cdot q_{\tau_K} (\rho_\idc) \bigr);
\end{eqnarray}
where $\rho_\idc \in \widehat{O}_K$ is taken so that $\idc = \rho_\idc \widehat{O}_K \cap K$; moreover, we can choose a single $a_\idc \in (N^{-1} \integer^2/\integer^2) \cdot q_{\tau_K}(\rho_\idc)$ so that $ K_{NO_K /\idc} = H_K(f_{a_\idc} (\tau_K))$ (cf.\ pp.135 \cite{Lang}). Applying this fact to $\idc := (\idp^e \ida, NO_K)$ in particular, we fix such $a_\idc =: a' \in (N^{-1} \integer^2/\integer^2) \cdot q_{\tau_K}(\rho)$, where we put $\rho := \rho_\idc = \rho_{(\idp^e \ida, NO_K)}$. 

Then we take $a \in N^{-1}\integer^2/\integer^2$ so that $a \cdot q_{\tau_K}(\rho) = a'$, with which we  prove that $H_K(\widehat{f}_a (\idp^e \ida)) \neq H_K (\widehat{f}_a(\idp^e \idb))$ by showing that $\idp$ is ramified in the former, while unramified in the latter. (This will complete the proof of this lemma.) To see this, let us take $[\rho, s], [\lambda, t] \in \widehat{O}_K \times_{\widehat{O}_K^*} (\Ad_{K,f}^* / K^*)$ such that the images of $[\rho, s], [\lambda, t]$ under the composition $\widehat{O}_K \times_{\widehat{O}_K^*} (\Ad_{K,f}^* / K^*) \simeq DR_K \twoheadrightarrow DR_{NO_K}$ are equal to $[\idp^e \ida], [\idp^e \idb] \in DR_{NO_K}$ respectively; here $\rho$ is the same as taken in the above. In particular $(\idp^e \ida, NO_K) = \rho \widehat{O}_K \cap K$ and $(\idp^e \idb, NO_K) = \lambda \widehat{O}_K \cap K$. As discussed in Lemma \ref{enough} too, recall also that the function $\widehat{f}_a: DR_K \rightarrow \comp$ factors through $DR_K \twoheadrightarrow DR_{NO_K}$ for $a \in N^{-1}\integer^2/\integer^2$. Then, with these data we have: 
\begin{eqnarray}
\widehat{f}_a (\idp^e \ida) &=& \widehat{f}_a (\rho, s) \\
&=& \bigl(f_{a q_{\tau_K}(\rho)}(\tau_K) \bigr) ^{[s]^{-1}}; \\
\widehat{f}_a (\idp^e \idb) &=& \widehat{f}_a (\lambda, t) \\
&=& \bigl(f_{a q_{\tau_K}(\lambda)}(\tau_K) \bigr) ^{[t]^{-1}}. 
\end{eqnarray}
Thus we have $H_K (\widehat{f}_a (\idp^e \ida)) = H_K (f_{aq_{\tau_K}(\rho)} (\tau_K))$ and $H_K (\widehat{f}_a (\idp^e \idb)) = H_K (f_{a q_{\tau_K}(\lambda)} (\tau_K))$. On one hand, by construction of $a \in N^{-1} \integer^2/\integer^2$, we have $H_K (\widehat{f}_a (\idp^e \ida)) = K_{NO_K / (\idp^e \ida, NO_K)}$; in particular, since $v_\idp (NO_K / (\idp^e \ida, NO_K)) > 0$, we see that $\idp$ ramifies in $H_K(\widehat{f}_a (\idp^e \ida)) = K_{NO_K / (\idp^e \ida, NO_K)}$. On the other hand, concerning $H_K (\widehat{f}_a (\idp^e \idb))$, we have $v_\idp (NO_K / (\idp^e \idb, NO_K)) = 0$; also, since $a q_{\tau_K}(\lambda) \in (N^{-1} \integer^2/\integer^2) \cdot q_{\tau_K} (\lambda)$, the field $H_K (\widehat{f}_a (\idp^e \idb))$ is a subfield of the ray class field $K_{NO_K / (\idp^e \idb, NO_K)}$ over $K$ whose conductor is not divisible by $\idp$. Therefore, $\idp$ is unramified in $H_K(\widehat{f}_a (\idp^e \idb))$ as requested. This completes the proof.
\end{proof}

\begin{lem}[modular description of the congruence $\sim_{NO_K}$]
\label{subdivision}
Let $N$ be a positive integer. Then $\ida \sim_{NO_K} \idb$ if and only if $\ida \equiv_{\Xi(N)} \idb$ for any $\ida, \idb \in I_K$. 
\end{lem}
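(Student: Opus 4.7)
The plan is to prove the two implications separately. For the forward direction $\ida \sim_{NO_K} \idb \Rightarrow \ida \equiv_{\Xi(N)} \idb$, I use that each Fricke function $f_a$ (for $a \in N^{-1}\integer^2/\integer^2$) lies in $\modular_N$, so the Witt deformation $\chi_a$ factors through $M_2(\integer/N\integer) \times \upper$; by the argument of Lemma~\ref{one-sided inclusion} (via Proposition~7.2 of \cite{Yalkinoglu}), this makes the associated modular vector $\widehat{\chi}_a : DR_K \to \comp$ factor through the projection $DR_K \twoheadrightarrow DR_{NO_K}$. Hence $\ida \sim_{NO_K} \idb$ implies $\ida\idc \sim_{NO_K} \idb\idc$, so $\widehat{\chi}_a(\ida\idc) = \widehat{\chi}_a(\idb\idc)$ for every $\idc \in I_K$ and every $a$, which is precisely $\psi_\ida \widehat{\chi}_a = \psi_\idb \widehat{\chi}_a$, i.e.\ $\ida \equiv_{\Xi(N)} \idb$.

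For the converse $\ida \equiv_{\Xi(N)} \idb \Rightarrow \ida \sim_{NO_K} \idb$, assume $\ida \equiv_{\Xi(N)} \idb$. Lemma~\ref{equal gcds} at once yields $\idc := (\ida, NO_K) = (\idb, NO_K)$, so $\ida$ and $\idb$ project to the same component $C_{NO_K/\idc}$ of $DR_{NO_K}$; it remains to show they also represent the same element within that component. Choosing $\rho \in \widehat{O}_K$ with $\rho\widehat{O}_K \cap K = \idc$, the computations already carried out in the proof of Lemma~\ref{equal gcds} furnish representatives $\ida \leftrightarrow [\rho, s]$ and $\idb \leftrightarrow [\rho, t]$ in $\widehat{O}_K \times_{\widehat{O}_K^*}(\Ad_{K,f}^*/K^*) \simeq DR_K$, together with the formulas $\widehat{\chi}_a(\ida) = (f_{aq_{\tau_K}(\rho)}(\tau_K))^{[s]^{-1}}$ and $\widehat{\chi}_a(\idb) = (f_{aq_{\tau_K}(\rho)}(\tau_K))^{[t]^{-1}}$ for every $a \in N^{-1}\integer^2/\integer^2$.

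The hypothesis $\widehat{\chi}_a(\ida) = \widehat{\chi}_a(\idb)$ forces the Artin element $[st^{-1}] \in G_K^{ab}$ to fix every Fricke value $f_b(\tau_K)$ for $b \in (N^{-1}\integer^2/\integer^2) \cdot q_{\tau_K}(\rho)$. Taking in particular the case $a = 0$ (so that $\chi_0 = j$ and $\widehat{\chi}_0 = \widehat{j}$ by Example~\ref{Fricke functions}) gives $j(\ida^{-1}) = j(\idb^{-1})$, which by complex-multiplication theory places $\ida, \idb$ in the same ideal class of $K$; via the Artin map this says $[st^{-1}]$ acts trivially on $H_K$. Combining this with the corollary in \S 3 Chapter 10 of \cite{Lang} invoked in Lemma~\ref{equal gcds}, namely $K_{NO_K/\idc} = H_K(f_b(\tau_K) \mid b \in (N^{-1}\integer^2/\integer^2) \cdot q_{\tau_K}(\rho))$, we obtain $[st^{-1}]|_{K_{NO_K/\idc}} = \id$; by the class-field identification $C_{NO_K/\idc} \simeq \mathrm{Gal}(K_{NO_K/\idc}/K)$, this says $\ida$ and $\idb$ define the same element of $C_{NO_K/\idc}$, hence $\ida \sim_{NO_K} \idb$. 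The main technical obstacle will be justifying the uniform representation $[\rho, s], [\rho, t]$ with a single $\rho$ determined by the common gcd $\idc$, and confirming that the formulas of Lemma~\ref{equal gcds} — derived there for $\idp^e\ida$ — apply verbatim to both $\ida$ and $\idb$; this requires careful bookkeeping of the $\widehat{O}_K^*$-action on the fiber product and of the splitting of an ideal into its $\idc$-part and its part coprime to $NO_K$.
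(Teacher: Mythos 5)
Your proposal is correct and follows essentially the same route as the paper. The forward direction coincides with the paper's: the $f_a$'s ($a\in N^{-1}\integer^2/\integer^2$) factor through $M_2(\integer/N\integer)$, so via Proposition~7.2 of \cite{Yalkinoglu} the $\widehat{\chi}_a$'s live on $DR_{NO_K}$, and $\sim_{NO_K}$ being a monoid congruence then gives $\equiv_{\Xi(N)}$. For the converse, you argue directly that $[st^{-1}]$ fixes $K_{NO_K/\idc}$, whereas the paper argues by contradiction (assume $[s],[t]$ differ on $K_{NO_K/\idd}$ and produce, via Lang's corollary, an $a'$ with $\widehat{f}_{a'}(\ida)\neq\widehat{f}_{a'}(\idb)$); these are the same argument phrased contrapositively. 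The ingredients are identical: Lemma~\ref{equal gcds} for the equality of gcd's, the decomposition $DR_{NO_K}=\coprod_{\idd\mid NO_K}C_{NO_K/\idd}$, the fiber-product representatives $[\rho,s],[\rho,t]$, and Lang's presentation $K_{NO_K/\idc}=H_K\bigl(f_b(\tau_K)\mid b\in(N^{-1}\integer^2/\integer^2)\cdot q_{\tau_K}(\rho)\bigr)$.

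One small remark: the detour through complex multiplication (``$j(\ida^{-1})=j(\idb^{-1})$ places $\ida,\idb$ in the same ideal class, hence $[st^{-1}]$ trivial on $H_K$'') is unnecessary, and it imports exactly the bookkeeping subtlety you flag at the end (relating $s,t$ to the Artin symbols of $\ida,\idb$). You already have, from the formula $\widehat{\chi}_0(\ida)=j(\tau_K)^{[s]^{-1}}$ with $f_0:=j$, that $[st^{-1}]$ fixes $f_0(\tau_K)=j(\tau_K)$ and hence $H_K=K(j(\tau_K))$ directly; this is just the $b=0$ instance of the fixing statement you already established, and it sidesteps any need to match $[s],[t]$ against ideal-class data.
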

\begin{proof}
We start with proving the only-if part. To this end, recall that for each $a \in N^{-1} \integer^2 / \integer^2$, the function $f_a (= \chi_a): M_2(\widehat{\integer}) \times \upper \rightarrow \comp$ factors through the projection $M_2(\widehat{\integer}) \twoheadrightarrow M_2(\integer / N \integer)$; and that the composition $DR_K \hookrightarrow \Gamma \backslash (M_2(\widehat{\integer}) \times \upper) \twoheadrightarrow \Gamma \backslash (M_2(\integer/ N \integer) \times \upper)$ factors through $DR_K \twoheadrightarrow DR_{N O_K}$. In other words, the modular vectors $\widehat{f}_a$'s for $a \in N^{-1} \integer^2 / \integer^2$ belong to $\Hom_{G_K}(DR_{NO_K}, \bar{K})$, i.e.\ the galois object $X_{N O_K}$ dual to $DR_{NO_K}$. Also $\ida \sim_{NO_K} \idb$ implies the identity $\psi_\ida = \psi_\idb$ on $X_{N O_K}$. Therefore, since $\widehat{f}_a \in X_{NO_K}$ for $a \in N^{-1} \integer^2/\integer^2$, we have $\psi_\ida \widehat{f}_a  = \psi_\idb \widehat{f}_a$ for every $a \in N^{-1} \integer^2 / \integer^2$, namely $\ida \equiv_{\Xi(N)} \idb$. 

Next we prove the if part along the same line as Lemma \ref{enough}. To this end, note first that, by assumption and Lemma \ref{equal gcds}, we have the equality $(\ida, NO_K) = (\idb, NO_K)$ of the greatest common divisors, which we denote by $\idd$; then, the $\sim_{NO_K}$-classes $[\ida]$ and $[\idb] \in DR_{NO_K}$ belong to the same component of the decomposition $DR_{NO_K} = \coprod_{\idd \mid NO_K} C_{NO_K / \idd}$. Again, as discussed in Lemma \ref{enough}, there exist $[\rho, s], [\rho, t] \in \widehat{O}_K \times_{\widehat{O}_K^*} (\Ad_{K,f}^* / K^*)$ such that $\idd = \rho \widehat{O}_K \cap K$ and $[\rho, s], [\rho, t]$ are mapped into $[\ida], [\idb] \in DR_{NO_K}$ respectively under the composition $\widehat{O}_K \times_{\widehat{O}_K^*} (\Ad_{K,f}^* / K^*) \simeq DR_K \twoheadrightarrow DR_{NO_K}$. Thus, to see the target congruence $\ida \sim_{NO_K} \idb$, it suffices to prove that $[s], [t] \in Gal(K^{ab} / K)$ define the same action on the ray class field $K_{NO_K / \idd}$ over $K$. 

We prove this by showing contradiction; suppose that $[s], [t]$ are not identical on $K_{NO_K / \idd}$. When this could be the case, as seen in Lemma \ref{enough}, there exists $a \in (N^{-1} \integer^2 / \integer^2) \cdot q_{\tau_K}(\rho) \simeq N^{-1} \idd / O_K$ such that $(f_a (\tau_K))^{[s]^{-1}} \neq (f_a (\tau_K))^{[t]^{-1}}$; take $a' \in N^{-1} \integer^2 / \integer^2$ so that $a' \cdot q_{\tau_K}(\rho) = a$. By construction, the ideal classes $[\ida], [\idb] \in DR_{NO_K}$ are the same respectively as the images of $[\rho, s], [\rho, t]$. Therefore, noting that $\widehat{f}_{a''}: DR_K \rightarrow \comp$ factors through $DR_K \twoheadrightarrow DR_{NO_K}$ for each $a'' \in N^{-1} \integer^2 / \integer^2$, we have $\widehat{f}_{a'} (\ida) = \widehat{f}_{a'} (\rho, s)$ and $\widehat{f}_{a'} (\idb) = \widehat{f}_{a'} (\rho, t)$. On the other hand, by assumption, we have:
\begin{equation}
 \widehat{f}_{a'}(\ida) = \widehat{f}_{a'}(\rho, s) = (f_a (\tau_K))^{[s]^{-1}} \neq (f_a(\tau_K))^{[t]^{-1}} = \widehat{f}_{a'} (\rho, t) = \widehat{f}_{a'}(\idb). 
\end{equation}
Thus $\psi_\ida \widehat{f}_{a'} (1) \neq \psi_\idb \widehat{f}_{a'} (1)$; however this contradicts to the assumption $\ida \equiv_{\Xi(N)} \idb$, whence $\psi_\ida \widehat{f}_{a'} = \psi_\idb \widehat{f}_{a'}$ must hold for $a' \in N^{-1} \integer^2 / \integer^2$. Therefore, $[s], [t]$ are identical on $K_{NO_K / \idd}$ and thus $\ida \sim_{NO_K} \idb$ as requested. 
\end{proof}

\begin{cor}
 We have an isomorphism of $\Lambda$-rings:
\begin{eqnarray}
 X_{\Xi(N)} &\simeq& X_{NO_K}. 
\end{eqnarray}
\end{cor}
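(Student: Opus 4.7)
The plan is to deduce this isomorphism directly from the duality $\C_K^{op} \simeq \cl DR_K$ of Borger--de Smit together with Lemma \ref{subdivision}. Both $X_{\Xi(N)}$ and $X_{NO_K}$ are galois objects of $\C_K$: the former by Proposition \ref{X_Xi is galois}, and the latter because it is by definition the object of $\C_K$ dual to the rooted (hence galois) $DR_K$-set $DR_{NO_K}$. Since isomorphism in $\C_K$ between galois objects translates under the duality into isomorphism of the corresponding rooted $DR_K$-sets $\F_K(X)$, it suffices to produce a $DR_K$-equivariant bijection $\F_K(X_{\Xi(N)}) \xrightarrow{\simeq} DR_{NO_K}$ sending root to root.

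First I would identify $\F_K(X_{\Xi(N)})$ as a quotient of $DR_K$. Taking the root $s_1 \in \F_K(X_{\Xi(N)})$ of Proposition \ref{X_Xi is galois}, the map $DR_K \to \F_K(X_{\Xi(N)})$, $x \mapsto s_1 \cdot x$, is a surjection of $DR_K$-sets; by commutativity of $DR_K$ and density of $I_K$ in $DR_K$, this surjection is determined by the monoid congruence on $I_K$ obtained by restriction, which is precisely $\equiv_{\Xi(N)}$ by definition. Analogously, $DR_{NO_K}$ is, by definition, the quotient of $DR_K$ (or equivalently of $I_K$) by the congruence $\sim_{NO_K}$.

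By Lemma \ref{subdivision}, the two congruences $\equiv_{\Xi(N)}$ and $\sim_{NO_K}$ on $I_K$ coincide. Hence the induced rooted $DR_K$-sets are canonically isomorphic, and passing back through the duality $\C_K^{op} \simeq \cl DR_K$ yields an isomorphism $X_{\Xi(N)} \simeq X_{NO_K}$ of $\Lambda$-rings.

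The content is entirely packaged in Lemma \ref{subdivision}; no further obstacle remains once that lemma is in hand, since everything else is a formal application of the semi-galois duality.
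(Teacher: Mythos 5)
Your proof takes essentially the same route as the paper: both observe that $X_{\Xi(N)}$ is the galois object dual to $I_K/\equiv_{\Xi(N)}$, that $X_{NO_K}$ is dual to $I_K/\sim_{NO_K}$, and that Lemma \ref{subdivision} identifies the two congruences, so the duality $\C_K^{op}\simeq\cl DR_K$ gives the isomorphism. You simply spell out the identification of $\F_K(X_{\Xi(N)})$ with the quotient of $DR_K$ in more detail than the paper's terse three-sentence argument.
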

\begin{proof}
 The galois object $X_{\Xi(N)}$ is dual to $I_K/ \equiv_{\Xi(N)}$, while $X_{NO_K}$ is dual to $I_K / \sim_{NO_K}$. Now as proved above, we know that $\equiv_{\Xi(N)}$ is equal to $\sim_{NO_K}$, hence the claim. 
\end{proof}

Finally we need the following:

\begin{lem}
\label{last lemma}
We have the equality: 
\begin{eqnarray}
X_{\Xi(N)} &=& K [\widehat{f}_a \mid a \in N^{-1} \integer^2 / \integer^2]. 
\end{eqnarray}
\end{lem}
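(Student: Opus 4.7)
My proof plan is as follows. The inclusion $K[\widehat{f}_a \mid a \in N^{-1}\integer^2/\integer^2] \subseteq X_{\Xi(N)}$ is immediate, since each $\widehat{f}_a$ already belongs to $\Xi(N)$, hence to $X_{\Xi(N)} = K \otimes O_K[I_K \Xi(N)]$. To establish the reverse inclusion, I would use a Galois-theoretic separation argument on the dual side of $\C_K$. By the corollary immediately preceding this lemma, $X_{\Xi(N)} \simeq X_{NO_K}$ is a finite etale $K$-algebra whose spectrum $\Hom_K(X_{NO_K}, \bar K)$ is, under the semi-galois duality, canonically identified with the finite $G_K$-set $DR_{NO_K}$. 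Each modular vector $\widehat{f}_a \in X_{NO_K}$ corresponds in this picture to the function $x \mapsto \widehat{f}_a(x)$ on $DR_{NO_K}$, which is well-defined because $f_a$ factors through $M_2(\widehat{\integer}) \twoheadrightarrow M_2(\integer/N\integer)$, so that $\widehat{f}_a: DR_K \to \comp$ factors through $DR_K \twoheadrightarrow DR_{NO_K}$.

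By the standard Galois correspondence for finite etale $K$-algebras, sub-$K$-algebras of $X_{NO_K}$ correspond bijectively to $G_K$-equivariant quotients of the $G_K$-set $DR_{NO_K}$: a sub-$K$-algebra $A$ is recovered from the equivalence relation $x \sim_A y \Leftrightarrow f(x) = f(y)$ for all $f \in A$. Applied to $A := K[\Xi(N)]$, the desired equality $A = X_{NO_K}$ therefore reduces to showing that the modular vectors $\widehat{f}_a$ with $a \in N^{-1}\integer^2/\integer^2$ \emph{separate the points} of $DR_{NO_K}$.

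This point separation is, in effect, already established in the proof of Lemma \ref{subdivision}. Given distinct $x \neq y \in DR_{NO_K}$, the density of $I_K$ in $DR_K$ lets me pick $\ida, \idb \in I_K$ mapping to $x, y$, so that $\ida \not\sim_{NO_K} \idb$. The argument carried out in the if-part of Lemma \ref{subdivision}---Lemma \ref{equal gcds} to reduce to the case where $(\ida, NO_K) = (\idb, NO_K)$, and the Corollary of Chapter 10, \S 3, \cite{Lang} to provide Fricke generators of the relevant ray class field---produces some $a \in N^{-1}\integer^2/\integer^2$ with $\widehat{f}_a(\ida) \neq \widehat{f}_a(\idb)$, which is exactly $\widehat{f}_a(x) \neq \widehat{f}_a(y)$. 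For the exceptional fields $K = \ratf(i)$ and $K = \ratf(\sqrt{-3})$, the same argument goes through after replacing each $f_a$ by $f_a^2$ or $f_a^3$ respectively, as already adopted in \S \ref{s4.3}. The main technical point---the Shimura reciprocity computation linking the values of Fricke functions to ray class fields over $K$---has therefore already been done; the genuine contribution of the present lemma is only the purely Galois-theoretic observation that point separation on $DR_{NO_K}$ by the functions in $\Xi(N)$ is enough to force the $K$-subalgebra $K[\Xi(N)]$ to fill up all of $X_{\Xi(N)} \simeq X_{NO_K}$.
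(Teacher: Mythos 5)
Your overall strategy is sound and is genuinely different from the paper's: rather than proving, as the paper does, the explicit equivariance formula $\psi_\ida \widehat{f}_a = \widehat{f}_{a q_\tau(s_\ida)}^{[s_\ida]}$ and deducing that $K[\Xi(N)]$ already contains the full orbit $I_K\Xi(N)$, you dualize and reduce the equality of finite \'etale $K$-algebras to the injectivity of the restriction map $\Hom_K(X_{\Xi(N)}, \bar K) \to \Hom_K(K[\Xi(N)], \bar K)$, i.e.\ to point-separation on $DR_{NO_K}$ by the functions $\widehat{f}_a$. That reduction is correct, and it is an attractive, more structural formulation.

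The gap is in your claim that the separation is ``already established'' by the proofs of Lemmas \ref{equal gcds} and \ref{subdivision}. What you need is: for $[\ida] \neq [\idb]$ in $DR_{NO_K}$, there exists $a \in N^{-1}\integer^2/\integer^2$ with $\widehat{f}_a(\ida) \neq \widehat{f}_a(\idb)$ --- i.e.\ separation at the level $\idc = 1$ of the two coordinate projections $\eta \mapsto \eta_\ida$ and $\eta \mapsto \eta_\idb$. The if-part of Lemma \ref{subdivision} does give exactly this, \emph{but only in the case} $(\ida, NO_K) = (\idb, NO_K)$; it reaches that case by invoking Lemma \ref{equal gcds}, and Lemma \ref{equal gcds} itself does \emph{not} produce $\widehat{f}_a(\ida) \neq \widehat{f}_a(\idb)$. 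Rather, it produces $\widehat{f}_a(\idp^e\ida) \neq \widehat{f}_a(\idp^e\idb)$ for a specific shift $\idp^e$ chosen so that $\idp^e\idb$ saturates the $\idp$-part of $NO_K$ while $\idp^e\ida$ does not; that shift is essential to the ramification argument there. This shows $\psi_\ida\widehat{f}_a \neq \psi_\idb\widehat{f}_a$ (which is what Lemma \ref{subdivision} needs), but the element $\psi_{\idp^e}\widehat{f}_a$ is not \emph{a priori} in $K[\Xi(N)]$ --- in fact, that it lies in $K[\Xi(N)]$ is precisely (a special case of) what Lemma \ref{last lemma} is supposed to prove, so you cannot appeal to it here without circularity. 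To close the gap you would either need a shift-free version of the ramification argument in Lemma \ref{equal gcds} separating $\widehat{f}_a(\ida)$ from $\widehat{f}_a(\idb)$ directly, or you would need the equivariance formula $\psi_{\idp^e}\widehat{f}_a = \widehat{f}_{a q_\tau(s_{\idp^e})}^{[s_{\idp^e}]}$ (which recasts $\widehat{f}_a(\idp^e\idc)$ as a Galois conjugate of $\widehat{f}_{b}(\idc)$ for another index $b$), and that formula is exactly the computation the paper's proof does --- at which point your approach offers no economy over theirs.
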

\begin{proof}
It suffices to see that $\psi_\ida \widehat{f}_a = \widehat{f}_{a \cdot q_\tau (s_\ida)} ^{[s_\ida]}$, namely $\widehat{f}_a (\ida \idc) = (\widehat{f}_{a \cdot q_\tau(s_\ida)} (\idc))^{[s_\ida]}$ for each $\idc \in I_K$, since $a \cdot q_\tau (s_\ida) \in N^{-1} \integer^2/\integer^2$ for any $a \in N^{-1} \integer^2/\integer^2$ and the right-hand-side is closed under the action of $G_K^{ab}$. But this can be proved as $\widehat{f}_a (\ida \idc) = \widehat{f}_{a} (s_{\ida} s_\idc, s_\ida^{-1} s_\idc^{-1}) = (\widehat{f}_{a q_\tau(s_\ida)}(s_\idc, s_\idc^{-1}))^{[s_\ida]} = (\widehat{f}_{a q_{\tau}(s_\ida)}(\idc))^{[s_\ida]}$, where we choose $s_\ida \in \Ad_{K,f}^*$ so that $\ida = s_\ida \widehat{O}_K \cap K$, hence the claim. (To be more specific, the first equality is described in the proof of Lemma \ref{component of modular vector}; the second equality uses the $G_K^{ab}$-equivariance of $\widehat{f}_a$ and follows from the construction of $\widehat{O}_K \times_{\widehat{O}_K^*} (\Ad_{K,f}^*/K^*) \hookrightarrow \Gamma \backslash (M_2(\widehat{\integer}) \times \upper)$ in \S \ref{s2.2}; the last equality again uses the argument described in Lemma \ref{component of modular vector}.)
\end{proof}

This completes the proof of Theorem \ref{explicit modularity theorem}. 

\bibliographystyle{abbrv}
\bibliography{semigalois2B}
\end{document}